\def \qed {\hfill \vrule height6pt width 6pt depth 0pt}
\def\textmatrix#1&#2\\#3&#4\\{\bigl({#1 \atop #3}\ {#2 \atop #4}\bigr)}
\def\dispmatrix#1&#2\\#3&#4\\{\left({#1 \atop #3}\ {#2 \atop #4}\right)}
\newcommand{\beg}{\begin{equation}}
	\newcommand{\eeg}{\end{equation}}
\newcommand{\ben}{\begin{eqnarray*}}
	\newcommand{\een}{\end{eqnarray*}}
\newtheorem{thm}{Theorem}[section]
\newtheorem{cor}[thm]{Corollary}
\newtheorem{lem}[thm]{Lemma}
\newtheorem{prop}[thm]{Proposition}
\numberwithin{equation}{section} \theoremstyle{definition}
\newtheorem{defn}[thm]{Definition}
\newtheorem{rem}[thm]{Remark}
\newtheorem{eg}[thm]{Example}
\newcommand{\C}{\mathbb{C}}
\newcommand{\G}{\mathbb{G}_2}
\newcommand{\D}{\mathbb{D}}
\newcommand{\T}{\mathbb{T}}
\newcommand{\N}{\mathbb{N}}
\newcommand{\HS}{\mathcal{H}}
\newcommand{\DC}{\overline{\mathbb{D}}}
\def\textmatrix#1&#2\\#3&#4\\{\bigl({#1 \atop #3}\ {#2 \atop #4}\bigr)}
\def\dispmatrix#1&#2\\#3&#4\\{\left({#1 \atop #3}\ {#2 \atop #4}\right)}
\begin{document}
	
	\title[Constrained dilation and $\Gamma$-contractions]{Constrained dilation and $\Gamma$-contractions}
	
	\author[Pal and Tomar]{SOURAV PAL AND NITIN TOMAR}
	
	\address[Sourav Pal]{Mathematics Department, Indian Institute of Technology Bombay, Powai, Mumbai-400076, India.} \email{sourav@math.iitb.ac.in, souravmaths@gmail.com}	
	
	\address[Nitin Tomar]{Mathematics Department, Indian Institute of Technology Bombay, Powai, Mumbai-400076, India.} \email{tomarnitin414@gmail.com}		
	
	\keywords{Distinguished variety, $\Gamma$-distinguished polynomial, $\Gamma$-distinguished $\Gamma$-contractions, $\Gamma$-distinguished $\Gamma$-isometric dilation, orthogonal decomposition}	
	
	\subjclass[2020]{47A20, 47A25, 14M12}

	\begin{abstract} 
A commuting pair of Hilbert space operators having the closed symmetrized bidisc 
\[
\Gamma=\{(z_1+z_2, z_1z_2) \ : \ |z_1| \leq 1, |z_2| \leq 1\}
\]
as a spectral set is called a \textit{$\Gamma$-contraction}. A \textit{$\Gamma$-unitary} is a commuting pair of normal operators with its Taylor joint spectrum inside the distinguished boundary $b\Gamma$ of $\Gamma$ and a \textit{$\Gamma$-isometry} is the restriction of a $\Gamma$-unitary to a joint invariant subspace of its components. Also, a \textit{pure} $\Gamma$-contraction is a $\Gamma$-contraction $(S, P)$ for which $P$ is a pure contraction, i.e., $P^{*n} \to 0$ strongly as $n \to \infty$. A $\Gamma$-contraction $(S,P)$ is called \textit{$\Gamma$-distinguished} if $(S,P)$ is annihilated by a polynomial $q \in \mathbb C[z_1,z_2]$ whose zero set $Z(q)$ defines a distinguished variety in the symmetrized bidisc $\G$. There is Schaffer-type minimal $\Gamma$-isometric dilation of a $\Gamma$-contraction $(S,P)$ in the literature. In this article, we study when such a minimal $\Gamma$-isometric dilation is $\Gamma$-distinguished provided that $(S,P)$ is a $\Gamma$-distinguished $\Gamma$-contraction. We show that a pure $\Gamma$-isometry $(T,V)$ with defect space $\dim \mathcal D_{V^*}< \infty$, is $\Gamma$-distinguished if and only if the fundamental operator of $(T^*,V^*)$ has numerical radius less than $1$. Further, it is proved that a $\Gamma$-contraction acting on a finite-dimensional Hilbert space dilates to a $\Gamma$-distinguished $\Gamma$-isometry if its fundamental operator has numerical radius less than $1$. We also provide sufficient conditions for a pure $\Gamma$-contraction to be $\Gamma$-distinguished. Wold decomposition splits an isometry into two orthogonal parts of which one is a unitary and the other is a completely non-unitary contraction. In this direction, we find a few decomposition results for the $\Gamma$-distinguished $\Gamma$-unitaries and $\Gamma$-distinguished pure $\Gamma$-isometries.
	\end{abstract}

	\maketitle

	\section{Introduction}\label{sec_intro}

\noindent Throughout the paper, all operators are bounded linear maps acting on complex Hilbert spaces. A contraction is an operator with norm at most $1$. For Hilbert spaces $\HS$ and $\mathcal{L}$, the space of all operators from $\HS$ to $\mathcal{L}$ is denoted by $\mathcal{B}(\HS, \mathcal{L})$ with $\mathcal{B}(\HS)=\mathcal{B}(\HS, \HS)$. We shall use the following notations:  $\C$ denotes the complex plane, $\D$ is the open unit disc $\{z\in \C : |z|<1\}$, $\T$ is the unit circle $\{z\in \C : |z|=1\}$ and $\mathbb{E}=\C \setminus \DC$, i.e., the complement of $\DC$. Given an operator $T$, the numerical and spectral radii of $T$ are denoted by $\omega(T)$ and $r(T)$ respectively.

\smallskip 

The symmetrized bidisc $\mathbb G_2$ is a non-convex but polynomially convex domain in $\C^2$ defined by
\[
\mathbb{G}_2:=\{(z_1+z_2, z_1z_2) \ : |z_1|<1, |z_2|<1\}. 
\]	
Evidently, a $2 \times 2$ matrix $A$ has spectral radius less than $1$ if and only if its eigenvalues are in $\D$, which is equivalent to saying that $(\text{tr}(A), \text{det}(A))$ belongs to $\G$. Furthermore, we can simply write $\G=\pi(\D^2)$ and its closure as 
\begin{align*}
	\Gamma= \overline{\mathbb G}_2=\pi(\DC^2)=\{(z_1+z_2, z_1z_2) \ : \ |z_1|\leq 1, |z_2| \leq 1 \},
\end{align*}
where $\pi: \C^2 \to \C^2$ is the symmetrization map defined as $\pi(z_1, z_2)=(z_1+z_2, z_1z_2)$. The symmetrized bidisc $\G$ originates from the $2 \times 2$ spectral Nevanlinna-Pick problem, which can be reformulated into a similar interpolation problem between $\D$ and $\G$ as was shown in \cite{Nikolov}. Furthermore, $\G$ is the first example of a non-convex domain in which the Caratheodory and Kobayashi distances coincide, e.g., see \cite{AglerIV}. Such interesting properties have made the symmetrized bidisc $\G$ a central object of study from complex geometric, function theoretic and operator theoretic points of view, e.g., see \cite{AglerI15, AglerII16, AglerIV, Bhatt-Pal, Pal8, PalShalit1} and references therein.

\smallskip 

The primary objects of study in this article are distinguished varieties in 
$\G$, the polynomials defining distinguished varieties in $\G$ and the operators associated with $\G$. A commuting pair of operators $(S,P)$ acting on a Hilbert space $\HS$ is said to be a $\Gamma$-\textit{contraction} if $\Gamma$ is a spectral set for $(S,P)$, i.e., the Taylor joint spectrum $\sigma_T(S,P) \subseteq \Gamma$ and von Neumann's inequality
\[
\|f(S,P)\| \leq \sup\{|f(s,p)| : (s, p) \in \Gamma\}= \|f\|_{\infty, \Gamma}
\]
holds for all rational functions $f=p \slash q$ with $p,q \in \C[z_1,z_2]$ and $q$ having no zeros in $\Gamma$. The class of such rational functions is denoted by Rat($\Gamma$). Just as contractions have special subclasses like unitaries, isometries and pure contractions, analogous subclasses can be defined for $\Gamma$-contractions. Let $(S, P)$ be a commuting operator pair on a Hilbert space $\HS$. The pair $(S, P)$ is called a $\Gamma$-\textit{unitary} if $S, P$ are normal and $\sigma_T(S,P)$ is contained in the distinguished boundary $b\Gamma$ of $\Gamma$ given by
\[
b\Gamma=\pi(\T^2)=\{(z_1+z_2, z_1z_2): z_1, z_2 \in \T\}.
\]
We call $(S, P)$ a $\Gamma$-\textit{isometry} if there is a Hilbert space $\mathcal{K} \supseteq \HS$ and a $\Gamma$-unitary $(T, U)$ acting on $\mathcal{K}$ such that $\HS$ is a common invariant subspace for $T, U$ and $(S, P)=(T|_{\HS}, U|_{\HS})$. Also, a $\Gamma$-contraction $(S, P)$ is said to be a \textit{pure} if $P$ is a pure contraction, i.e., ${P^*}^n \rightarrow 0$ strongly as $n \rightarrow \infty$.

\begin{defn}
Given a bounded domain $\Omega$ in $\C^n$, a nonempty set $V \subseteq \Omega$ is said to be a \textit{distinguished variety} in $\Omega$ if there is an algebraic variety $W \subset \C^n$ such that $V=W \cap \Omega $ and $W \cap \partial \overline{\Omega}= W \cap b\overline{\Omega}$, where $b\overline{\Omega}$ is the distinguished boundary of $\overline{\Omega}$.
\end{defn}

Over the past two decades, distinguished varieties in the domains such as the bidisc, symmetrized bidisc, tetrablock and more generally, in the polydisc and symmetrized polydisc have attracted significant attention, e.g., see \cite{AglerKneseMcCarthy2, AglerMcCarthy, Bhatt-Sau-Kumar, Das, DasII, Dritschel, Knese9, PalS12, PalS, PalS11, PalShalit1}. One of the most pioneering works in operator theory is And\^{o}'s inequality \cite{Ando}, which states that if $(T_1, T_2)$ is a commuting pair of contractions, then for every polynomial $p \in \C[z_1, z_2]$,
 \[
 \|p(T_1, T_2)\|\leq \sup\{|p(z_1, z_2)| : (z_1, z_2) \in \DC^2 \}.
 \] 
 The main motivation behind studying distinguished varieties comes from a refinement of And\^{o}'s inequality due to Agler and McCarthy \cite{AglerMcCarthy}, where they proved the inequality for a pair of commuting contractive matrices $(T_1, T_2)$ under additional hypothesis, with the supremum taken over a distinguished variety in $\D^2$. An analogous result for the symmetrized bidisc was proved by the authors of \cite{PalShalit1}. Also, an explicit description of all distinguished varieties in the bidisc and the symmetrized bidisc were given in \cite{AglerMcCarthy} and \cite{PalShalit1}, respectively. To go parallel with the bidisc, we say that a polynomial $p \in \C[z_1, z_2]$ is \textit{$\Gamma$-distinguished} if its zero set $Z(p)$ defines a distinguished variety with respect to $\G$, i.e., $Z(p) \cap \G \ne \emptyset$ and $Z(p) \cap \partial \Gamma =Z(p) \cap b\Gamma$. Also, a $\Gamma$-contraction $(S, P)$ is called \textit{$\Gamma$-distinguished} if it is annihilated by a $\Gamma$-distinguished polynomial.		
 
 \smallskip 

Agler and Young \cite{AglerI15} proved that every $\Gamma$-contraction $(S, P)$ on a space $\HS$ admits a $\Gamma$-isometric dilation, i.e., there exists a $\Gamma$-isometry $(T, V)$ on a space $\mathcal{K} \supseteq \HS$ such that $f(S, P)=P_\HS f(T, V)|_\HS$ for all $f \in \text{Rat}(\Gamma)$, where $P_\HS$ is the orthogonal projection of $\mathcal{K}$ onto $\HS$. Such a dilation is referred to as a $\Gamma$-unitary dilation if $(T, V)$ is a $\Gamma$-unitary. An explicit construction of a $\Gamma$-isometric dilation of $(S, P)$ on the minimal isometric dilation space of $P$ was established by the authors of \cite{Pal8} for which the notion of the fundamental operator of a $\Gamma$-contraction was introduced. The success of $\Gamma$-isometric dilation for $\Gamma$-contractions naturally raises the question if every $\Gamma$-distinguished $\Gamma$-contraction dilates to a $\Gamma$-distinguished $\Gamma$-isometry. While the general problem is still open, the authors in \cite{Pal_Tomar} provided several characterizations of $\Gamma$-distinguished $\Gamma$-contractions that admit such dilations. For a contraction $P$, we denote by $D_P=(I-P^*P)^{1\slash 2}$ and $\mathcal{D}_P=\overline{\text{Ran}} \ D_P$ the defect operator and defect space respectively of $P$, respectively.

\vspace{0.15cm}

In this article, we continue this study and show that $\Gamma$-distinguished $\Gamma$-contractions admit $\Gamma$-distinguished $\Gamma$-isometric dilations in particular cases. A first step in this direction is to determine which $\Gamma$-isometries are $\Gamma$-distinguished.  Although the authors in \cite{Pal_Tomar} provides a characterization of such $\Gamma$-isometries, we further refine it under certain assumption. It is proved in Section \ref{sec05} that a pure $\Gamma$-isometry $(S, P)$ with finite-dimensional $\mathcal{D}_{P^*}$ is $\Gamma$-distinguished if and only if $(F_*, 0)$ is $\Gamma$-distinguished, which holds if and only if $r(F_*)<1$, where $F_*$ is the fundamental operator of $(S^*, P^*)$. As an application of this result and the fact that every pure $\Gamma$-contraction dilates to a pure $\Gamma$-isometry (see \cite{Pal8}), a sufficient condition for pure $\Gamma$-contractions $(S, P)$ with $\dim \mathcal{D}_{P^*}$ being finite is established in Theorem \ref{5.9}. Finally, we prove in Theorem \ref{thm_FD} that $\Gamma$-contractions on finite-dimensional Hilbert spaces dilate to $\Gamma$-distinguished $\Gamma$-isometries if the fundamental operator has numerical radius strictly less than $1$, and we provide two independent proofs of this result.

\vspace{0.15cm}

Since it is still unknown whether $\Gamma$-distinguished $\Gamma$-contractions always admit $\Gamma$-distinguished $\Gamma$-isometric dilations, it is natural to consider the $\Gamma$-isometric dilation constructed in \cite{Pal8}. In Section \ref{minimal}, we recall the $\Gamma$-isometric dilation $(T_A, V_0)$ of a $\Gamma$-contraction $(S, P)$ constructed therein, and address the following key questions:

\vspace{0.1cm}

(1) Is $(T_A, V_0)$ necessarily $\Gamma$-distinguished, at least when $(S,P)$ itself is $\Gamma$-distinguished?
	
	\vspace{0.15cm} 
	
	(2) If both $(S,P)$ and $(T_A, V_0)$ are $\Gamma$-distinguished, then does every $\Gamma$-distinguished polynomial annihilating $(S,P)$ also annihilate $(T_A, V_0)$?

\vspace{0.1cm}

Examples \ref{7.4} and \ref{Shift_counter_example} show that the above conclusions do not hold in general. Example \ref{Shift_counter_example} also shows that a $\Gamma$-contraction can dilate to two different $\Gamma$-isometries of which one is $\Gamma$-distinguished and the other is not. This, in turn, leads to the problem of characterizing when $(T_A, V_0)$ is $\Gamma$-distinguished. In this direction, we prove in Theorem \ref{thm:Char-f} that if $(S,P)$ is a $\Gamma$-distinguished $\Gamma$-contraction with the fundamental operator $A \in \mathcal{B}(\mathcal{D}_P)$ and finite-dimensional $\mathcal{D}_P$, then $(T_A, V_0)$ is $\Gamma$-distinguished if and only if $(A,0)$ is $\Gamma$-distinguished, equivalently when $r(A) < 1$. Moreover, we show that this equivalence remains true if $A$ is hyponormal, even when $\dim \mathcal{D}_{P}$ is not finite (see Remark \ref{rem6.7} and Theorem \ref{7.12}). In general, we prove that if $(S,P)$ and $(A,0)$ are $\Gamma$-distinguished, then $(T_A, V_0)$ can be realized as the limit of a sequence of $\Gamma$-distinguished $\Gamma$-contractions in the strong operator topology. At every stage, the fundamental operator of $(S, P)$ plays a crucial role.

\vspace{0.15cm} 

A Wold type decomposition for $\Gamma$-isometries from \cite{AglerII16} states that any $\Gamma$-isometry splits into a $\Gamma$-unitary and a pure $\Gamma$-isometry. Hence, if a $\Gamma$-isometry is $\Gamma$-distinguished, then each component is also $\Gamma$-distinguished, and the same polynomial annihilates both parts. Thus, every $\Gamma$-distinguished $\Gamma$-isometry splits into a $\Gamma$-distinguished unitary and a $\Gamma$-distinguished pure $\Gamma$-isometry. In Section \ref{sec_decomp}, we provide a further decomposition for a subclass of such operator pairs, namely those which are annihilated by polynomials whose zero sets lie in $\G \cup b\Gamma \cup \pi(\mathbb{E}^2)$. This decomposition is motivated by Theorem 2.1 in \cite{AglerKneseMcCarthy2}, where a similar decomposition was established for pure isometries based on the irreducible factors of an annihilating polynomial $q$ with $Z(q) \subseteq \D^2 \cup \T^2 \cup \mathbb{E}^2$. 

 \vspace{0.1cm}
	
	\section{The $\Gamma$-distinguished $\Gamma$-contractions on finite-dimensional Hilbert spaces}\label{sec05}

\vspace{0.1cm}	

\noindent In this section, we primarily study the $\Gamma$-distinguished $\Gamma$-isometric dilations of $\Gamma$-contractions $(S, P)$ acting on finite-dimensional Hilbert spaces. A natural first step is to identify which $\Gamma$-isometries are $\Gamma$-distinguished. While a characterization is established by the authors in \cite{Pal_Tomar}, it can be further refined if the defect space of $P^*$ is finite-dimensional for a pure $\Gamma$-isometry $(S, P)$. To begin with, we show that being $\Gamma$-distinguished is invariant under unitary equivalence and the proof is straightforward.

	\begin{lem}\label{lem5.5}
		If $(S_1, P_1)$ and $(S_2, P_2)$ are two unitarily equivalent $\Gamma$-contractions on the Hilbert spaces $\mathcal{H}_1$ and $\mathcal{H}_2$ respectively, i.e., there is a unitary operator $U: \mathcal{H}_1 \to \mathcal{H}_2$ such that $S_1=U^*S_2U$ and $ P_1=U^*P_2U$. Then $(S_1, P_1)$ is $\Gamma$-distinguished if and only if $(S_2, P_2)$ is $\Gamma$-distinguished.
	\end{lem}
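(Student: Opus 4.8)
The plan is to unwind the definition of $\Gamma$-distinguished and transport the annihilating polynomial across the unitary equivalence. Recall $(S_1,P_1)$ is $\Gamma$-distinguished precisely when there exists a polynomial $q\in\C[z_1,z_2]$ whose zero set $Z(q)$ is a distinguished variety in $\G$ (that is, $Z(q)\cap\G\neq\emptyset$ and $Z(q)\cap\partial\Gamma=Z(q)\cap b\Gamma$) such that $q(S_1,P_1)=0$. Notice that this condition on $q$ is a condition on $q$ alone and has nothing to do with the operators; so the only thing that needs to move is the annihilation relation.

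First I would assume $(S_1,P_1)$ is $\Gamma$-distinguished, fix such a $q$ with $q(S_1,P_1)=0$, and show $q(S_2,P_2)=0$. The key computational point is that for the unitary $U:\mathcal H_1\to\mathcal H_2$ with $S_1=U^*S_2U$ and $P_1=U^*P_2U$, one has $m(S_1,P_1)=U^*m(S_2,P_2)U$ for every monomial $m(z_1,z_2)=z_1^{j}z_2^{k}$, since $S_1,P_1$ commute, $S_2,P_2$ commute, and $U^*U=I_{\mathcal H_1}$, $UU^*=I_{\mathcal H_2}$ collapse the intermediate factors; by linearity the same holds for $q$, giving $q(S_1,P_1)=U^*q(S_2,P_2)U$. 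Hence $q(S_1,P_1)=0$ forces $U^*q(S_2,P_2)U=0$, and multiplying on the left by $U$ and on the right by $U^*$ yields $q(S_2,P_2)=0$. Since the same polynomial $q$ is $\Gamma$-distinguished, $(S_2,P_2)$ is $\Gamma$-distinguished. The reverse implication is identical with the roles of the two pairs swapped (using $S_2=US_1U^*$, $P_2=UP_1U^*$), or one simply observes that unitary equivalence is symmetric.

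There is essentially no obstacle here: the only mild point worth stating explicitly is that $(S_2,P_2)$ is again a $\Gamma$-contraction — but this is part of the hypothesis (both pairs are assumed to be $\Gamma$-contractions), and in any case a spectral set is preserved under unitary equivalence because $\|f(S_2,P_2)\|=\|Uf(S_1,P_1)U^*\|=\|f(S_1,P_1)\|$ for all $f\in\mathrm{Rat}(\Gamma)$ and Taylor joint spectrum is a unitary invariant. So the proof reduces to the one-line algebraic transport of the annihilating polynomial together with the remark that the distinguished-variety condition is intrinsic to $q$.
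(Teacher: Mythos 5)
Your proof is correct and follows essentially the same route as the paper, which likewise reduces the claim to the identity $q(S_1,P_1)=U^*q(S_2,P_2)U$ and the observation that the distinguished-variety condition concerns only the polynomial. Your version simply spells out the monomial computation and the symmetry of the argument in more detail.
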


	\begin{proof}
		The conclusion follows from the fact that if $p \in \mathbb{C}[z_1, z_2]$ is a polynomial annihilating $(S_2,P_2)$, then $p(S_1, P_1)=U^*p(S_2, P_2)U$.
	\end{proof}

The rich operator theory of $\G$ is based on the following fundamental results from \cite{AglerI15, Pal8}. To understand these results better, we recall (see Section I.3 of \cite{NagyFoias6}) that $D_P=(I-P^*P)^{1\slash 2}$ and $\mathcal{D}_P=\overline{\text{Ran}} \ D_P$ are the defect operator and defect space respectively of a contraction $P$.

	\begin{thm}[\cite{AglerI15}, Theorem 1.2 \& \cite{Pal8}, Theorem 4.4]\label{thm_201}
	Let $(S,P)$ be a pair of commuting operators on a Hilbert space $\mathcal{H}$. Then the following are equivalent:
	\begin{enumerate}
		\item $(S,P)$ is a $\Gamma$-contraction;
		\item $(S,P)$ admits a $\Gamma$-isometry dilation;
		\item $\|S\| \leq 2, \|P\| \leq 1$ and the operator equation $S-S^*P=D_PXD_P$
has a unique solution $A$ in $\mathcal{B}(\mathcal{D}_P)$ with $\omega(A) \leq 1$.
	\end{enumerate}
The unique operator $A$ is called the \textit{fundamental operator} of the $\Gamma$-contraction $(S,P)$. 	
\end{thm}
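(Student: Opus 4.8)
The plan is to prove the equivalence by running the cycle $(2)\Rightarrow(1)\Rightarrow(3)\Rightarrow(2)$, after first recording the norm bounds and settling the uniqueness of $A$ separately. The norm bounds are immediate: feeding the coordinate functions $f(s,p)=p$ and $f(s,p)=s$ into von Neumann's inequality and using $\sup_{\Gamma}|p|=1$ and $\sup_{\Gamma}|s|=2$ gives $\|P\|\leq 1$ and $\|S\|\leq 2$ for any $\Gamma$-contraction. For $(2)\Rightarrow(1)$, recall that a $\Gamma$-isometry is by definition the restriction of a $\Gamma$-unitary $(\widetilde T,\widetilde V)$ to a joint invariant subspace; since the components of a $\Gamma$-unitary are commuting normals with joint spectrum inside $b\Gamma\subseteq\Gamma$, the spectral theorem yields von Neumann's inequality over $\Gamma$ for $(\widetilde T,\widetilde V)$ directly. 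Because the dilation relation $f(S,P)=P_{\mathcal H}f(\widetilde T,\widetilde V)|_{\mathcal H}$ holds for every $f\in\text{Rat}(\Gamma)$ and compression cannot increase norm, both von Neumann's inequality and the spectral containment $\sigma_T(S,P)\subseteq\Gamma$ descend to $(S,P)$.

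The heart of the argument is $(1)\Rightarrow(3)$, and the crucial input is the family of ``magic'' rational functions $f_\alpha(s,p)=(2\alpha p-s)/(2-\alpha s)$, which for $\alpha\in\mathbb D$ lie in $\text{Rat}(\Gamma)$ and map $\Gamma$ into $\overline{\mathbb D}$. Von Neumann's inequality then gives $\|f_\alpha(S,P)\|\leq 1$, that is, the operator inequality
\[
(2\alpha P-S)^*(2\alpha P-S)\leq (2I-\alpha S)^*(2I-\alpha S).
\]
Expanding both sides and letting $|\alpha|\to 1$, I would rearrange this into the pencil inequality $\operatorname{Re}(\alpha Y)\leq D_P^2$, valid for all $\alpha\in\mathbb T$, where $Y:=S-S^*P$ and I have used that $\alpha$ ranges over all of $\mathbb T$ (so that the derived lower bound at $-\alpha$ becomes the stated upper bound at $\alpha$). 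Two consequences follow. Testing on the diagonal and choosing $\alpha$ so that $\alpha\langle Yh,h\rangle=|\langle Yh,h\rangle|$ yields $|\langle Yh,h\rangle|\leq\|D_Ph\|^2$; and a first-order perturbation argument, replacing $h$ by $h+tg$ and letting $t\to 0$ (with the $O(t^2)$ right-hand side killing the linear term for all $\alpha$), forces $\ker D_P\subseteq\ker Y\cap\ker Y^*$.

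These kernel inclusions let me define a sesquilinear form on $\operatorname{Ran}D_P\times\operatorname{Ran}D_P$ by $\beta(D_Ph,D_Pg)=\langle Yh,g\rangle$: the inclusion $\ker D_P\subseteq\ker Y$ makes it independent of the representative in the first slot, while $\ker D_P\subseteq\ker Y^*$ does so in the second. The diagonal estimate then bounds $\beta$ and produces an operator $A\in\mathcal B(\mathcal D_P)$ with $\langle AD_Ph,D_Pg\rangle=\langle Yh,g\rangle$, i.e. $S-S^*P=D_PAD_P$. The same diagonal estimate $|\langle AD_Ph,D_Ph\rangle|=|\langle Yh,h\rangle|\leq\|D_Ph\|^2$ on the dense set $\operatorname{Ran}D_P$ gives $\omega(A)\leq 1$, which in turn forces $\|A\|\leq 2\omega(A)$ so that $A$ is genuinely bounded. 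Uniqueness is immediate and can be disposed of first: if $D_PAD_P=D_PBD_P$ then $D_P(A-B)D_P=0$, and the density of $\operatorname{Ran}D_P$ in $\mathcal D_P$ forces $A=B$ on $\mathcal D_P$.

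Finally, for $(3)\Rightarrow(2)$ I would exhibit the explicit Schäffer-type dilation on $\mathcal K=\mathcal H\oplus\mathcal D_P\oplus\mathcal D_P\oplus\cdots$, taking $V_0$ to be the minimal isometric dilation of $P$ and building $T_A$ as a lower-triangular operator whose $(1,1)$-entry is $S$ and whose shift structure is governed by $A$ and $A^*$. Using the relation $S-S^*P=D_PAD_P$ and $\omega(A)\leq 1$, one checks that $(T_A,V_0)$ satisfies the Agler--Young criteria for a $\Gamma$-isometry, while the lower-triangular form gives $f(S,P)=P_{\mathcal H}f(T_A,V_0)|_{\mathcal H}$ for all $f\in\text{Rat}(\Gamma)$. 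I expect the main obstacle to be the factorization in $(1)\Rightarrow(3)$: extracting $A$ with the sharp bound $\omega(A)\leq 1$ requires careful handling of the well-definedness and boundedness of $\beta$ together with the optimization over $\alpha\in\mathbb T$; the verification that the explicit $(T_A,V_0)$ is truly a $\Gamma$-isometry is also delicate, but once the block structure is fixed it reduces to bookkeeping.
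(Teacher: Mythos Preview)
The paper does not prove this theorem; it is stated as a background result with attribution to Agler--Young \cite{AglerI15} and Bhattacharyya--Pal--Shyam Roy \cite{Pal8}, and no proof is supplied. Your outline is correct and in fact reproduces the argument from those original sources: the implication $(1)\Rightarrow(3)$ via the family $f_\alpha(s,p)=(2\alpha p-s)/(2-\alpha s)$ and the resulting pencil inequality $\operatorname{Re}(\alpha(S-S^*P))\leq D_P^2$ is exactly the Bhattacharyya--Pal--Shyam Roy route to the fundamental operator, and the explicit Sch\"affer-type dilation $(T_A,V_0)$ you invoke for $(3)\Rightarrow(2)$ is precisely what the present paper later restates as Theorem~\ref{Tirtha-Pal}. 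One small point worth tightening: in $(2)\Rightarrow(1)$ the spectral containment $\sigma_T(S,P)\subseteq\Gamma$ does not literally ``descend'' by compression; rather, once the polynomial von Neumann inequality is inherited, polynomial convexity of $\Gamma$ forces $\sigma_T(S,P)\subseteq\Gamma$ via the spectral mapping theorem.
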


For a $\Gamma$-contraction, Theorem \ref{thm_201} ensures the existence of a $\Gamma$-isometric dilation. Moreover, Theorem 4.6 of \cite{Pal8} shows that a pure $\Gamma$-contraction dilates to a pure $\Gamma$-isometry. Thus, pure $\Gamma$-isometries naturally arises as dilation models in this framework. The next theorem provides an explicit model of pure $\Gamma$-isometries using the fundamental operator of the adjoint pair.
  
\begin{thm}[\cite{PalShalit1}, Theorem 2.16]  \label{thm:modelpure}
	Let $(S,P)$ be a pair of commuting operators on a Hilbert space $\mathcal{H}$. If $(S,P)$ is a pure $\Gamma$-isometry, then there is a unitary operator $U:\mathcal{H} \to H^2(\mathcal{D}_{P^*})$ such that 
	\[
	S=U^*T_\phi U \quad \mbox{and} \quad P=U^*T_zU,
	\]
	where $\phi(z)=F_*^*+F_*z$ with	$F_* \in B(\mathcal{D}_{P^*})$ being the fundamental operator of $(S^*, P^*)$, and $T_\phi$ and $T_z$ are Toeplitz operators on the vector-valued Hardy Hilbert space $H^2(\mathcal{D}_{P^*})$ with symbols $\phi$ and $z$ respectively.
\end{thm}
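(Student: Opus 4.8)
The plan is to pin down $P$ via the Wold decomposition and then read off the form of $S$ from the relation that $\Gamma$-contractivity imposes on a commuting pair whose second coordinate is an isometry. Since $(S,P)$ is a $\Gamma$-isometry, $P$ is an isometry, and being a \emph{pure} $\Gamma$-isometry it is completely non-unitary; so by the von Neumann--Wold theorem $P$ is unitarily equivalent to the unilateral shift $T_z$ on $H^{2}(\mathcal W)$, where $\mathcal W=\mathcal H\ominus P\mathcal H$ is the wandering subspace. As $P$ is an isometry, $I-PP^{*}$ is the orthogonal projection onto $\mathcal W$, whence $\mathcal W=\overline{\text{Ran}}(I-PP^{*})^{1/2}=\mathcal D_{P^{*}}$. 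Fix a unitary $U:\mathcal H\to H^{2}(\mathcal D_{P^{*}})$ with $UPU^{*}=T_z$ and put $S_0:=USU^{*}$. Then $(S_0,T_z)$ is again a $\Gamma$-contraction by Lemma~\ref{lem5.5}, and $S_0$ commutes with $T_z$, so by the classical description of the commutant of the vector-valued shift, $S_0=T_\Psi$ for some bounded analytic $\mathcal B(\mathcal D_{P^{*}})$-valued function $\Psi(z)=\sum_{n\geq 0}A_nz^{n}$.

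The next step is to force $\Psi$ to be affine. Applying Theorem~\ref{thm_201}(3) to the $\Gamma$-contraction $(S_0,T_z)$ and using that $D_{T_z}=(I-T_z^{*}T_z)^{1/2}=0$, we get $S_0-S_0^{*}T_z=D_{T_z}XD_{T_z}=0$, that is, $T_\Psi=T_\Psi^{*}T_z$. Comparing Fourier coefficients, with $(T_\Psi f)_k=\sum_{i\leq k}A_if_{k-i}$, $(T_\Psi^{*}g)_j=\sum_{i\geq 0}A_i^{*}g_{i+j}$ and $(T_zf)_m=f_{m-1}$, the $0$-th coordinate of this identity reads $A_0f_0=\sum_{i\geq 1}A_i^{*}f_{i-1}$ for all $f\in H^{2}(\mathcal D_{P^{*}})$; testing on vectors supported in a single coordinate gives $A_0=A_1^{*}$ and $A_n=0$ for $n\geq 2$. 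Hence $\Psi(z)=F^{*}+Fz$ with $F:=A_1\in\mathcal B(\mathcal D_{P^{*}})$.

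It remains to identify $F$ with the fundamental operator $F_{*}$ of $(S^{*},P^{*})$. Since $\Gamma$ is invariant under coordinatewise conjugation, $(S^{*},P^{*})$ is a $\Gamma$-contraction, so by Theorem~\ref{thm_201} it carries a unique fundamental operator $F_{*}\in\mathcal B(\mathcal D_{P^{*}})$ determined by $S^{*}-SP^{*}=D_{P^{*}}F_{*}D_{P^{*}}$, with $\omega(F_{*})\leq 1$. Transporting to the model, $D_{P^{*}}^{2}=I-T_zT_z^{*}$ is the orthogonal projection of $H^{2}(\mathcal D_{P^{*}})$ onto the constants $\mathcal D_{P^{*}}$, and a direct coefficientwise computation with $\Psi=F^{*}+Fz$ shows that $T_\Psi^{*}-T_\Psi T_z^{*}$ is exactly the operator $h\mapsto(Fh_0,0,0,\dots)$, i.e.\ $D_{P^{*}}FD_{P^{*}}$. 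By the uniqueness clause of Theorem~\ref{thm_201} we get $F_{*}=F$, and conjugating back gives $S=U^{*}T_\phi U$ and $P=U^{*}T_zU$ with $\phi(z)=F_{*}^{*}+F_{*}z$.

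The only delicate point I anticipate is the bookkeeping at the boundary index $n=0$ in the two Fourier-coefficient computations: the term that would pair with the nonexistent coefficient $f_{-1}$ must be discarded, and it is precisely this edge effect that produces the off-diagonal block $Fh_0$ in $S^{*}-SP^{*}$ and hence the generally nonzero fundamental operator $F_{*}$. Everything else is either classical (Wold decomposition, commutant of the shift) or immediate from Theorem~\ref{thm_201}.
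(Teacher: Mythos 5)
Your argument is correct. Note that the paper does not prove this statement at all --- it is imported verbatim from \cite{PalShalit1} (Theorem 2.16) --- so there is no internal proof to compare against; your proof reconstructs essentially the standard argument of the cited source: Wold decomposition identifies the pure isometry $P$ with $T_z$ on $H^2(\mathcal W)$ with $\mathcal W=\mathcal D_{P^*}$, the commutant of the vector-valued shift gives $S=T_\Psi$, the $\Gamma$-isometry identity $S=S^*P$ (obtained here from Theorem \ref{thm_201}(3) via $D_{T_z}=0$) forces the symbol to be affine of the form $F^*+Fz$, and a coefficient computation identifies $F$ with the fundamental operator of the adjoint pair. One cosmetic remark: in the last step you invoke ``the uniqueness clause'' of Theorem \ref{thm_201}, which as stated asserts uniqueness only among solutions with $\omega(X)\leq 1$; this is harmless because the equation $D_{P^*}XD_{P^*}=Y$ has at most one solution in $\mathcal B(\mathcal D_{P^*})$ regardless (here $D_{P^*}=I-T_zT_z^*$ is the projection onto the constants, so $D_{P^*}(F-F_*)D_{P^*}=0$ gives $F=F_*$ directly), but it is worth saying so explicitly rather than leaning on the clause as written.
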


A major part of operator theory on the distinguished varieties in the symmetrized bidisc is based on the $\Gamma$-contractions $(S,P)$ for which $\mathcal D_P$ or $\mathcal D_{P^*}$ is finite-dimensional, e.g., see \cite{PalShalit1}. So, in this section, we explore relations of these classes of $\Gamma$-contractions and their associated fundamental operators with $\Gamma$-distinguished polynomials. More precisely, we provide below a set of necessary and sufficient conditions for such pure $\Gamma$-isometries to be $\Gamma$-distinguished.

	\begin{thm} \label{r(A)<1_(S,P)_distt.}
		Let $(S,P)$ be a pure $\Gamma$-isometry on a Hilbert space $\mathcal{H}$ such that $\dim \mathcal{D}_{P^*} < \infty$ and let $F_* \in B(\mathcal{D}_{P^*})$ be the fundamental operator of $(S^*, P^*)$. Then the following are equivalent:
		\begin{enumerate}
			\item $(F_*,0)$ is $\Gamma$-distinguished;
			\item $(S,P)$ is $\Gamma$-distinguished;
			\item $r(F_*)<1$.
		\end{enumerate}	
	\end{thm}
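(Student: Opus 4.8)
The plan is to prove the cycle $(3) \Rightarrow (1) \Rightarrow (2) \Rightarrow (3)$, exploiting the explicit model from Theorem \ref{thm:modelpure} which identifies $(S,P)$ with $(T_\phi, T_z)$ on $H^2(\mathcal{D}_{P^*})$ where $\phi(z) = F_*^* + F_* z$. The key observation is that a polynomial $q \in \mathbb{C}[z_1,z_2]$ annihilates $(S,P)$ if and only if it annihilates $(T_\phi, T_z)$, and since $T_z$ has no eigenvalues but $q(T_\phi, T_z) = 0$ forces a rigid algebraic relation, one should be able to read off that $q$ must be divisible by (a power of) the characteristic-polynomial-type expression coming from $\phi$. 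Concretely, on $H^2(\mathcal{D}_{P^*})$ the operator $q(T_\phi, T_z)$ is the analytic Toeplitz operator with symbol $z_2 \mapsto q(\phi(z_2), z_2)$, an $\mathcal{L}(\mathcal{D}_{P^*})$-valued polynomial in $z_2$; this symbol vanishes identically if and only if $q(T_\phi, T_z)=0$. So I would first establish: $(S,P)$ is annihilated by $q$ iff $q(\phi(z), z) = 0$ as an operator-valued polynomial on $\mathbb{D}$ (indeed on $\mathbb{C}$). Since $\dim \mathcal{D}_{P^*} < \infty$, this is equivalent to $q(\phi(\lambda), \lambda) = 0$ for all $\lambda$, i.e. every pair $(\mu, \lambda)$ with $\mu$ an eigenvalue of $\phi(\lambda)$ lies in $Z(q)$.

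For $(3) \Rightarrow (1)$: assume $r(F_*) < 1$. Then $(F_*, 0)$ has Taylor spectrum $\sigma(F_*) \times \{0\} \subseteq \mathbb{D} \times \{0\} \subseteq \mathbb{G}_2$, so it is a $\Gamma$-contraction (one checks $\|F_*\| \le 2$ from $\omega(F_*)\le 1$, and the defect equation is trivially solved since the second component is $0$). I would produce an explicit $\Gamma$-distinguished polynomial annihilating $(F_*, 0)$: take $q(z_1, z_2) = z_2 \cdot \det(z_1 I - 2F_* + \ast)$ — more precisely, using the substitution underlying the symmetrization, a natural candidate is $q(z_1,z_2) = \prod_j (z_1 - (\mu_j + \bar\mu_j^{-1} z_2))$-type factors; but the cleanest route is: since $r(F_*)<1$, the minimal polynomial $m(z_1)$ of $F_*$ has all roots in $\mathbb{D}$, and I claim $q(z_1,z_2) := z_2^{\deg m} \, m(z_1) + (\text{correction terms vanishing at } z_2 = 0)$ can be chosen so that $Z(q) \cap \partial\Gamma = Z(q) \cap b\Gamma$. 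The safest choice is $q(z_1, z_2) = \prod_{j}\big(z_1 z_2 - \mu_j z_1 + (\text{...})\big)$; I would instead appeal to the symmetrized-bidisc distinguished variety classification from \cite{PalShalit1}: a variety of the form $\{(\mu + \bar\mu z, z) : z \in \overline{\mathbb{D}}\}$ with $|\mu| < 1$ is distinguished in $\mathbb{G}_2$, its defining polynomial annihilates $(F_*, 0)$ when $F_*$ is $1\times 1$, and the general finite-dimensional case is handled by taking products over the eigenvalues $\mu_j$ of $F_*$ (with multiplicity), after putting $F_*$ in upper-triangular form and arguing that an upper-triangular solution annihilates the corresponding product polynomial.

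The implication $(1) \Rightarrow (2)$ is the heart of the matter and where I expect the main obstacle. Given a $\Gamma$-distinguished polynomial $q_0$ annihilating $(F_*, 0)$, I want a (possibly different) $\Gamma$-distinguished polynomial annihilating $(S,P) \cong (T_\phi, T_z)$. The natural guess is that the polynomial $q(z_1,z_2) := z_2^{\deg_{z_1} q_0}\, q_0\!\left(\tfrac{z_1 + \text{something}}{z_2}, \ldots\right)$ obtained by the "$\Gamma$-flip" $z \mapsto (z_1/z_2, 1/z_2)$-reciprocal works, because $\phi(z) = F_*^* + F_* z$ and the fundamental operator of the dilation is built from $F_*$; but justifying that the flipped polynomial is genuinely $\Gamma$-distinguished (that $Z(q) \cap \partial\Gamma = Z(q)\cap b\Gamma$) requires the boundary behavior of $\phi$, namely that $\phi(z)$ has eigenvalues on the symmetrized torus exactly when $|z| = 1$, which in turn uses $\omega(F_*) \le 1$ together with $r(F_*) < 1$. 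I would run the argument via the eigenvalue description above: for each $\lambda \in \overline{\mathbb{D}}$ and eigenvalue $\mu$ of $\phi(\lambda) = F_*^* + F_*\lambda$, the point $(\mu, \lambda)$ must lie in $Z(q_0)$ after the appropriate change of variables, and the $\Gamma$-distinguished hypothesis on $q_0$ plus the spectral-radius bound forces $(\mu,\lambda) \in b\Gamma$ iff $\lambda \in \mathbb{T}$ — this is exactly the distinguished-variety condition for the flipped $q$. Finally $(2) \Rightarrow (3)$: if $q$ annihilates $(S,P)$ then $q(\phi(\lambda),\lambda) = 0$ for all $\lambda$; setting $\lambda = 0$ gives $q(F_*^*, 0) = 0$, so $\sigma(F_*^*) \subseteq \{z_1 : q(z_1, 0) = 0\}$, and the $\Gamma$-distinguished condition $Z(q) \cap \partial \Gamma = Z(q) \cap b\Gamma$ combined with $Z(q) \cap \mathbb{G}_2 \ne \emptyset$ forces the slice $\{z_1 : q(z_1, 0) = 0\}$ to miss $\mathbb{T}$-related boundary points, pinning $\sigma(F_*)$ inside $\mathbb{D}$, i.e. $r(F_*) < 1$. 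The finite-dimensionality of $\mathcal{D}_{P^*}$ is used crucially in passing from "symbol vanishes" to "eigenvalue condition" and in guaranteeing $q_0$ has finite degree.
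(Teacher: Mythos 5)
Your overall skeleton matches the paper's: identify $(S,P)$ with $(T_{F_*^*+F_*z},T_z)$ via Theorem \ref{thm:modelpure}, use the determinant/characteristic polynomial coming from the symbol, and use the spectral mapping theorem plus the fact that $(\lambda,0)\in\partial\Gamma\setminus b\Gamma$ for $\lambda\in\mathbb{T}$ to rule out unimodular spectrum. Your implication $(2)\Rightarrow(3)$ is correct and is essentially the paper's $(1)\Rightarrow(3)$ run through the compression to constants, and your $(3)\Rightarrow(1)$ via the product $\prod_j(z_1-\mu_j-\overline{\mu}_j z_2)$ over the eigenvalues of an upper-triangularized $F_*$ is a workable (if over-hedged) variant of what the paper gets for free from its chosen cycle.

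The genuine gap is the step that produces a $\Gamma$-distinguished polynomial annihilating the pure $\Gamma$-isometry itself (your $(1)\Rightarrow(2)$; the paper's $(3)\Rightarrow(2)$). Two problems. First, the polynomial you build for $(F_*,0)$ cannot be recycled: in the $1\times 1$ case with $F_*=f$, your factor is $z_1-f-\overline{f}z_2$, whereas the symbol $\phi(\lambda)=\overline{f}+f\lambda$ is killed by $z_1-\overline{f}-fz_2$; these differ unless $f$ is real, and in general $\prod_j\bigl((F_*^*-\mu_j)+(F_*-\overline{\mu}_j)\lambda\bigr)$ has no reason to vanish. The correct annihilator is $p(z_1,z_2)=\det(F_*^*+F_*z_2-z_1I)$, whose symbol vanishes by Cayley--Hamilton. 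Second, and more seriously, you never verify that this $p$ is $\Gamma$-distinguished, i.e.\ that $Z(p)\cap\partial\Gamma\subseteq b\Gamma$; you only remark that this ``requires the boundary behavior of $\phi$'' and that the spectral-radius bound ``forces'' the distinguished-variety condition. That is precisely the nontrivial content of the step: the paper closes it by citing Theorem 2.5 of \cite{Das} (for $\omega(F)\le 1$, the variety $\det(F^*+z_2F-z_1)=0$ is distinguished in $\mathbb{G}_2$ if and only if $\sigma(F)\cap\mathbb{T}=\emptyset$), which together with $\omega(F_*)\le 1$ and $r(F_*)<1$ finishes the proof. Without that citation or an actual proof of the boundary statement, your argument does not go through. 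A minor structural remark: since your $(1)\Rightarrow(2)$ secretly uses $r(F_*)<1$, you would in any case first have to prove $(1)\Rightarrow(3)$ (the easy spectral-mapping direction), at which point the paper's cycle $(1)\Rightarrow(3)\Rightarrow(2)\Rightarrow(1)$ is the cleaner organization.
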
 

	\begin{proof}
 The model theory of pure $\Gamma$-isometry (see Theorem \ref{thm:modelpure}) tells us that $(S,P)$ is unitarily equivalent to the pure $\Gamma$-isometry $(T_{F_*^*+F_*z}, T_z)$ on $H^2(\mathcal{D}_{P^*})$. Since $\dim \mathcal{D}_{P^*} < \infty$, we have that $p(z_1, z_2)=det(F_*^*+F_*z_2-z_1I)$ defines a polynomial. Moreover, $p(T_{F_*^*+F_*z}, T_z)=0$. It follows from Lemma \ref{lem5.5} that $p(S, P)=0$. We prove : $(1) \implies (3) \implies (2) \implies (1)$. 
		
		\medskip
		
	\noindent  $(1)\implies (3)$.  Assume that $f(F_*,0)=0$ for some $\Gamma$-distinguished polynomial $f$. Since $F_*$ is the fundamental operator of $(S^*, P^*),$ we have $r(F_*) \leq \omega(F_*) \leq 1$. Let if possible $r(F_*)=1$. Since $F_*$ is a matrix, there exists some $\lambda \in  \sigma(F_*) \cap \mathbb{T}$. By spectral mapping theorem, we have 
		\begin{equation*}
			\begin{split}
				\{0\}&=\sigma(f(F_*,0))
				=f(\sigma_T(F_*,0))=f(\sigma(F_*)\times \{0\}).\\
			\end{split}
		\end{equation*}
		Since $\lambda \in \sigma(F_*)$, we have that $f(\lambda,0)=0$. Now, $f$ being $\Gamma$-distinguished implies that 
		\[
		\pi(\lambda,0)=(\lambda,0) \in Z(f) \cap \partial \mathbb{G}_2 \subseteq b\Gamma \quad \text{and so,} \quad (\lambda,0) \in b\Gamma,
		\]
		which is a contradiction. Hence, $r(F_*) <1$.
			
\medskip 
				
		\noindent $(3) \implies (2)$. Let $r(F_*) <1$. For a matrix $F$ with numerical radius at most $1$, Theorem 2.5 in \cite{Das} states that
		the set $W=\{(z_1, z_2) \in \C^2: \det(F^*+z_2F-z_1)=0\}$ is a distinguished variety in $\G$ if and only if $\sigma(F) \cap \T =\emptyset$. Since $\sigma(F_*) \subseteq \mathbb{D}$ and $\omega(F_*) \leq 1$, the polynomial $p(z_1, z_2)=det(F_*^*+F_*z_2-z_1I)$ is  $\Gamma$-distinguished. So, $(T_{F_*^*+F_*z}, T_z)$ is $\Gamma$-distinguished as $p(T_{F_*^*+F_*z}, T_z)=0$. 
	
\medskip 
		
		\noindent $(2) \implies (1)$. Let $(S,P)$ be $\Gamma$-distinguished. By Lemma \ref{lem5.5}, $(T_{F_*^*+F_*z}, T_z)$ is also $\Gamma$-distinguished. Let $q$ be a $\Gamma$-distinguished polynomial such that $q(T_{F_*^*+F_*z}, T_z)=0$. It is evident that $q(F_*,0)=0$ and so, $(F_*, 0)$ is $\Gamma$ -distinguished. The proof is now complete.
		\end{proof}

As an application of Theorem \ref{r(A)<1_(S,P)_distt.}, we obtain a sufficient condition for a pure $\Gamma$-contraction to be $\Gamma$-distinguished under some additional assumption.

	\begin{thm}\label{5.9}
		Let $(S,P)$ be a pure $\Gamma$-contraction on a Hilbert space $\mathcal{H}$ such that $\dim \mathcal{D}_{P^*} < \infty$. Let $F_* \in B(\mathcal{D}_{P^*})$ be the fundamental operator of $(S^*, P^*)$. Then $(S,P)$ is $\Gamma$-distinguished if $(F_*,0)$ is $\Gamma$-distinguished or $r(F_*)<1$.
	\end{thm}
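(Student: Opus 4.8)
The plan is to combine Theorem \ref{r(A)<1_(S,P)_distt.} with the fact (from \cite{Pal8}, Theorem 4.6) that every pure $\Gamma$-contraction dilates to a pure $\Gamma$-isometry, and then to exploit that the dilation can be taken on the minimal isometric dilation space of $P$, which keeps the defect space of the dilating isometry finite-dimensional when $\dim \mathcal{D}_{P^*} < \infty$. First I would invoke Theorem \ref{r(A)<1_(S,P)_distt.} to note that the hypotheses ``$(F_*,0)$ is $\Gamma$-distinguished'' and ``$r(F_*) < 1$'' are equivalent in this finite-dimensional setting, so it suffices to treat the case $r(F_*) < 1$ and produce a single $\Gamma$-distinguished polynomial annihilating $(S,P)$.

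The key step is the following: let $(T,V)$ on $\mathcal{K} \supseteq \mathcal{H}$ be the pure $\Gamma$-isometric dilation of $(S,P)$ given by \cite{Pal8}, Theorem 4.6, and observe that its construction is carried out on the minimal isometric dilation space of the pure contraction $P$, so that $\mathcal{D}_{V^*}$ is (unitarily) identified with $\mathcal{D}_{P^*}$ and hence $\dim \mathcal{D}_{V^*} = \dim \mathcal{D}_{P^*} < \infty$. Crucially, the fundamental operator of $(T^*, V^*)$ coincides with $F_*$, the fundamental operator of $(S^*, P^*)$ — this is part of the explicit description in \cite{Pal8} and is also visible from the model in Theorem \ref{thm:modelpure}, where the pure $\Gamma$-isometry is unitarily equivalent to $(T_{F_*^* + F_* z}, T_z)$ on $H^2(\mathcal{D}_{P^*})$ with the same operator $F_*$ governing the symbol. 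Thus the hypothesis $r(F_*) < 1$ is exactly the hypothesis of implication $(3) \Rightarrow (2)$ of Theorem \ref{r(A)<1_(S,P)_distt.} applied to $(T,V)$, so that $(T,V)$ is $\Gamma$-distinguished: there is a $\Gamma$-distinguished polynomial $q$, namely $q(z_1,z_2) = \det(F_*^* + F_* z_2 - z_1 I)$, with $q(T,V) = 0$.

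It then remains to transport annihilation from the dilation down to $(S,P)$. Since $(T,V)$ is a dilation of $(S,P)$, for every polynomial (indeed every $f \in \mathrm{Rat}(\Gamma)$) we have $q(S,P) = P_{\mathcal{H}}\, q(T,V)|_{\mathcal{H}}$; but one needs genuine annihilation, not just the compression. Here I would use the standard fact that $\mathcal{H}$ is a semi-invariant (in fact, since $(T,V)$ is a \emph{co-}extension in the $P$-component in the Sz.-Nagy--Foias sense, $\mathcal{H}^{\perp}$ is invariant for $T,V$, or equivalently $\mathcal{H}$ is invariant for $T^*,V^*$) subspace, so that $q(T,V)|_{\mathcal{H}} = P_{\mathcal{H}}\, q(T,V)|_{\mathcal{H}} = q(S,P)$; therefore $q(T,V) = 0$ forces $q(S,P) = 0$. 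Since $q$ is $\Gamma$-distinguished, $(S,P)$ is $\Gamma$-distinguished, completing the proof.

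The main obstacle I anticipate is the bookkeeping in the second paragraph: one must be careful that the \cite{Pal8} dilation really is realized on the minimal isometric dilation space of $P$ so that $\mathcal{D}_{V^*} \cong \mathcal{D}_{P^*}$ is finite-dimensional, and that the fundamental operator of $(T^*,V^*)$ is literally $F_*$ and not merely unitarily equivalent to it in some way that could perturb the spectral radius. Both points are in principle contained in \cite{Pal8} and in the model of Theorem \ref{thm:modelpure}, but stating them cleanly — and confirming that the relevant compression identity upgrades to honest annihilation because of the co-extension structure — is where the care is needed; everything else is a direct citation of Theorem \ref{r(A)<1_(S,P)_distt.}.
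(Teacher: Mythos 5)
Your proposal is correct and follows essentially the same route as the paper: dilate $(S,P)$ to the pure $\Gamma$-isometry $(T_{F_*^*+F_*z},T_z)$ on $H^2(\mathcal{D}_{P^*})$ via Theorem 4.6 of \cite{Pal8}, apply Theorem \ref{r(A)<1_(S,P)_distt.} to that dilation, and pull the annihilating $\Gamma$-distinguished polynomial back to $(S,P)$. The only superfluous step is your appeal to semi-invariance/co-extension structure in the final paragraph: since $q(T,V)=0$, the dilation identity $q(S,P)=P_{\mathcal{H}}\,q(T,V)|_{\mathcal{H}}$ already yields $q(S,P)=0$ because the compression of the zero operator is zero.
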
 
	\begin{proof}
		By Theorem 4.6 in \cite{Pal8}, the $\Gamma$-isometry $(T,V)=(T_{F_*^*+F_*z}, T_z)$ on $H^2(\mathcal{D}_{P^*})$ dilates $(S,P)$, which means that
		$
		f(S,P)=P_\mathcal{H}f(T_{F_*^*+F_*z}, T_z)|_\mathcal{H}
		$
		for every polynomial $f \in \mathbb{C}[z_1, z_2]$. The pair $(T,V)$ is a pure $\Gamma$-isometry with $F_*$ as the fundamental operator of $(S^*, P^*)$. Since $\dim\mathcal{D}_{P^*} < \infty,$ Theorem \ref{r(A)<1_(S,P)_distt.} yields that $(T_{F_*^*+F_*z}, T_z)$ is $\Gamma$-distinguished if and only if $(F_*, 0)$ is $\Gamma$-distinguished or $r(F_*)<1$. In either case, there is a $\Gamma$-distinguished polynomial annihilating $(T,V)$. Clearly, the same polynomial annihilates $(S,P)$, which completes the proof.
	\end{proof}
	
	It will be shown in Example \ref{Shift_counter_example} that the converse of the above theorem fails in general. We next present another sufficient condition for $\Gamma$-contractions acting on finite-dimensional spaces to be $\Gamma$-distinguished. To do so, we recall the following result from \cite{PalShalit1}, which provides an explicit description of a distinguished variety in the symmetrized bidisc. 
		 
	 \begin{thm} [\cite{PalShalit1}, Theorem 3.5]\label{PalShalit}
	 	For a square matrix $A$ with $\omega(A) <1$, the set $W$ given by 
	 	\[
	 	W=\{(z_1, z_2)\in \mathbb{G}_2 \ :\det(A+ z_2A^*-z_1I)=0\}.
	 	\]
	 	is a distinguished variety in $\G$. Conversely, every distinguished variety in $\mathbb{G}_2$ has the form $\{(z_1, z_2)\in \mathbb{G}_2 \ :\ det(A+ z_2A^*-z_1I)=0\}$ for some matrix $A$ with $\omega(A) \leq 1$.
	 \end{thm}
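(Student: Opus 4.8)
The plan is to handle the two implications separately: the ``if'' direction by a direct computation with the matricial pencil $A+z_2A^{*}-z_1I$, and the converse by realising a distinguished variety through the operator model of pure $\Gamma$-isometries. For the constructive direction, fix a square matrix $A$ of size $n$ with $\omega(A)<1$ and set $\widetilde W=\{(z_1,z_2)\in\C^2:\det(A+z_2A^{*}-z_1I)=0\}$; this is an algebraic variety since its defining polynomial is, up to sign, monic of degree $n$ in $z_1$. First $\widetilde W\cap\G\neq\emptyset$: because $r(A)\le\omega(A)<1$, every eigenvalue $\lambda$ of $A$ lies in $\D$, and for the point $(\lambda,0)=\pi(\lambda,0)\in\G$ one has $\det(A+z_2A^{*}-z_1I)=\det(A-\lambda I)=0$. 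The substance is the identity $\widetilde W\cap\partial\Gamma=\widetilde W\cap b\Gamma$, the inclusion $\supseteq$ being trivial. For $\subseteq$, take $(z_1,z_2)\in\widetilde W\cap\partial\Gamma$ and a unit vector $x$ with $(A+z_2A^{*}-z_1I)x=0$; pairing with $x$ and writing $\beta=\langle Ax,x\rangle$ yields $z_1=\beta+\bar\beta z_2$ with $|\beta|\le\omega(A)<1$. Since $(z_1,z_2)\in\Gamma$ we have $|z_2|\le1$; if $|z_2|<1$, then by the Agler--Young description of $\G$ (a point $(s,p)$ with $|p|<1$ lies in $\G$ whenever $s=\gamma+\bar\gamma p$ for some $|\gamma|<1$, cf.\ \cite{AglerI15}) the point $(z_1,z_2)$ would lie in the open $\G$, contradicting $(z_1,z_2)\in\partial\Gamma$. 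Hence $|z_2|=1$, so the two roots $w_1,w_2$ of $\zeta^2-z_1\zeta+z_2$ satisfy $|w_1w_2|=1$ with $|w_i|\le1$, forcing $|w_1|=|w_2|=1$ and $(z_1,z_2)=\pi(w_1,w_2)\in b\Gamma$. Thus $\widetilde W$ is a distinguished variety in $\G$.

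For the converse, let $W$ be a distinguished variety in $\G$, so that $\overline W\setminus W\subseteq b\Gamma$. Equip the one-dimensional variety $W$ with harmonic measure on its boundary, let $H^2(W)$ be the closure of $\text{Rat}(\Gamma)|_{W}$ in the associated $L^2$-space, and let $(M_{z_1},M_{z_2})$ be the commuting pair of multiplications by the coordinate functions on $H^2(W)$. Because $\partial W\subseteq b\Gamma$, the coordinates $z_1,z_2$ are well behaved on $\partial W$ and $z_2$ is unimodular there, so on $L^2$ the pair $(M_{z_1},M_{z_2})$ is a $\Gamma$-unitary, and its restriction to the invariant subspace $H^2(W)$ is a $\Gamma$-isometry. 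Moreover $z_2$ restricts to a proper holomorphic map $W\to\D$, so $M_{z_2}$ is a pure isometry and $\dim\mathcal D_{M_{z_2}^{*}}=\dim\ker M_{z_2}^{*}=:m$ equals the covering degree of $z_2|_{W}$, in particular finite. Hence $(M_{z_1},M_{z_2})$ is a pure $\Gamma$-isometry with finite-dimensional $\mathcal D_{M_{z_2}^{*}}$.

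Now Theorem \ref{thm:modelpure}, applied to $(M_{z_1},M_{z_2})$, identifies this pair up to unitary equivalence with $(T_{F_*^{*}+F_*z},T_z)$ on $H^2(\mathcal D_{M_{z_2}^{*}})$, where $F_*$ is the fundamental operator of $(M_{z_1}^{*},M_{z_2}^{*})$; then the polynomial $\det(F_*^{*}+z_2F_*-z_1I)$ annihilates $(M_{z_1},M_{z_2})$ and therefore vanishes on $W$. Taking $A:=F_*^{*}$ gives $\det(A+z_2A^{*}-z_1I)=0$ on $W$ with $\omega(A)=\omega(F_*)\le1$ by the fundamental-operator bound in Theorem \ref{thm_201}. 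What remains is to promote $W\subseteq\{(z_1,z_2)\in\G:\det(A+z_2A^{*}-z_1I)=0\}$ to an equality, and this is where I expect the main difficulty to lie: $\det(A+z_2A^{*}-z_1I)$ has degree $m$ in $z_1$, while over a generic $z_2\in\D$ the fibre of $W$ already contains $m$ points (as $z_2|_{W}$ is $m$-to-$1$), so these exhaust the roots and force the two sets to coincide; carrying this count through at the singular points of $W$, and checking that $H^2(W)$ is well behaved there, is the technical core. A route that avoids the Hardy-space construction is to pull $W$ back to the swap-symmetric distinguished variety $\pi^{-1}(W)\subseteq\D^2$ — using that $\pi$ is a proper $2$-to-$1$ map with $\pi^{-1}(b\Gamma)=\T^2$ — apply the Agler--McCarthy description of distinguished varieties in the bidisc \cite{AglerMcCarthy}, push the resulting symmetric defining polynomial back down through $\pi$, and recognise the determinantal form; either way, the crux is controlling the degree and the symmetry of the defining polynomial.
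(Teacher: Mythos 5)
First, a point of reference: the paper does not prove Theorem \ref{PalShalit} at all --- it is imported verbatim from \cite{PalShalit1} (Theorem 3.5 there) --- so the comparison below is with the argument in that source rather than with anything in this paper. Your forward direction is complete and correct, and it is essentially the source's proof: nonemptiness from an eigenvalue $\lambda\in\D$ of $A$ (using $r(A)\le\omega(A)<1$), and for a boundary point a unit vector $x$ in the kernel of the pencil giving $z_1=\beta+\bar\beta z_2$ with $\beta=\langle Ax,x\rangle$, $|\beta|\le\omega(A)<1$; the Agler--Young criterion ($s-\bar sp=\beta(1-|p|^2)$, so $|s-\bar sp|<1-|p|^2$) correctly rules out $|z_2|<1$ on $\partial\Gamma$, and $|z_2|=1$ forces both roots of $\zeta^2-z_1\zeta+z_2$ onto $\T$.

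The converse, however, contains a genuine gap, which you yourself flag. Your construction yields only the inclusion $W\subseteq\{(z_1,z_2)\in\G:\det(A+z_2A^*-z_1I)=0\}$, and the reverse inclusion --- the actual content of the converse --- rests on two claims you assert but do not prove: (i) that $\dim\ker M_{z_2}^*$ on $H^2(W)$ equals the generic cardinality of the fibres of $z_2|_W$ (the ``defect dimension equals number of sheets'' lemma, which in \cite{AglerMcCarthy} and \cite{PalShalit1} requires its own argument and is where the geometry of the one-dimensional variety really enters), and (ii) that the resulting degree count in $z_1$ survives the finitely many singular fibres and possible multiple factors of the determinant, so that the $m$ points of a generic fibre of $W$ exhaust the $m$ roots. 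Without (i) you cannot even assert the determinant has the right $z_1$-degree; without (ii) the determinantal variety could contain an extra component. In addition, the analytic setup is asserted rather than established: that $H^2(W)$ with respect to harmonic measure is well defined at singular points, that $M_{z_2}$ is pure, and that $p(M_{z_1},M_{z_2})=0$ forces $p|_W\equiv 0$ all need justification. Your alternative route through $\pi^{-1}(W)\subseteq\D^2$ is plausible but carries its own unaddressed burden: one must show $\pi^{-1}(W)$ is a swap-invariant distinguished variety of the bidisc and then convert the Agler--McCarthy representation $\det(\Psi(z)-wI)=0$ into the symmetric pencil $\det(A+z_2A^*-z_1I)$, which is not a formal manipulation. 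In short, your proposal is an accurate roadmap of the published proof, but it stops short of the decisive step.
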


The above theorem leads to the following result.

\begin{prop}\label{prop512}
	Let $(S,P)$ be a $\Gamma$-contraction on a finite-dimensional Hilbert space $\mathcal{H}$ and let $A$ be the fundamental operator of $(S,P)$ with  $\omega(A)<1$. Then $(S,P)$ is $\Gamma$-distinguished.
\end{prop}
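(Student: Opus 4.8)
The plan is to exhibit an explicit $\Gamma$-distinguished polynomial annihilating $(S,P)$, built directly from the fundamental operator $A$ of $(S,P)$. Since $\mathcal{H}$ is finite-dimensional and $(S,P)$ is a $\Gamma$-contraction, Theorem \ref{thm_201} gives a unique $A \in \mathcal{B}(\mathcal{D}_P)$ with $\omega(A) \le 1$ (here we even assume $\omega(A) < 1$) solving $S - S^*P = D_P X D_P$. First I would recall the structure theorem from \cite{Pal8} (Theorem 4.4 there and its refinement) expressing $S = A + A^* P$ on the appropriate model space, or more directly use that on $\mathcal{D}_P$ the defining relation forces a polynomial identity. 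The natural candidate is
\[
p(z_1, z_2) = \det\bigl(A + z_2 A^* - z_1 I_{\mathcal{D}_P}\bigr),
\]
which is a genuine polynomial in $\C[z_1,z_2]$ because $\dim \mathcal{D}_P < \infty$. By Theorem \ref{PalShalit}, since $\omega(A) < 1$, the set $W = \{(z_1,z_2) \in \mathbb{G}_2 : \det(A + z_2 A^* - z_1 I) = 0\}$ is a distinguished variety in $\G$; in particular $Z(p) \cap \G \ne \emptyset$ and $Z(p) \cap \partial \Gamma = Z(p) \cap b\Gamma$, so $p$ is a $\Gamma$-distinguished polynomial.

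The key step that remains is to verify that $p(S,P) = 0$, i.e.\ that $\det(A + P A^* - S) $, interpreted correctly as an operator on $\mathcal{H}$ (or a functional-calculus expression), vanishes. The route I would take: on a finite-dimensional space, using the canonical form of a $\Gamma$-contraction (e.g.\ the fact that $(S,P)$ dilates to a $\Gamma$-isometry and, more usefully here, that $S$ and $P$ are simultaneously expressible through $A$ and the minimal isometric dilation of $P$), one reduces to checking the identity $S = A + P A^*$ does \emph{not} hold verbatim in general — rather, the correct relation is the operator identity coming from $S - S^* P = D_P A D_P$ together with $S^* - SP^* = D_{P^*} A_* D_{P^*}$ for the fundamental operator $A_*$ of $(S^*,P^*)$. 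So instead I would invoke the fact, established in \cite{Pal8, PalShalit1}, that for a $\Gamma$-contraction on a finite-dimensional space, $(S,P)$ is annihilated by $\det(A_* + z_2 A_*^* - z_1 I_{\mathcal{D}_{P^*}})$ — one works with the adjoint pair $(S^*, P^*)$, uses the pure/unitary Wold-type splitting of its $\Gamma$-isometric dilation, and pushes the annihilating polynomial of the dilation down to $(S^*,P^*)$ via the co-isometry, then takes adjoints. Alternatively, and perhaps more cleanly, one can apply Theorem \ref{r(A)<1_(S,P)_distt.} or Theorem \ref{5.9} to the pure part after a Wold-type decomposition of $(S,P)$ itself (a $\Gamma$-contraction on a finite-dimensional space decomposes into a $\Gamma$-unitary part and a pure part, and $\omega(A) < 1$ is inherited by the relevant fundamental operator of each summand), handling the $\Gamma$-unitary summand separately.

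The main obstacle I anticipate is precisely bookkeeping this reduction: relating the fundamental operator $A$ of $(S,P)$ to the fundamental operator $A_*$ of $(S^*,P^*)$ (they live on different defect spaces $\mathcal{D}_P$ and $\mathcal{D}_{P^*}$, which need not have equal dimension), and ensuring that $\omega(A) < 1$ transfers to whichever operator actually produces the annihilating determinant. On a finite-dimensional space $\mathcal{D}_P$ and $\mathcal{D}_{P^*}$ do have the same dimension, and there is a standard unitary identification intertwining $A$ and $A_*^*$ up to the defect operators — so I expect $\omega(A_*) = \omega(A) < 1$ to follow, but this is the delicate point to get exactly right. Once that is in place, the $\Gamma$-unitary summand is annihilated by a linear $\Gamma$-distinguished polynomial of the form $z_1 - \alpha z_2 - \bar\alpha$ (coming from $S = \bar\alpha + \alpha P$ on that summand, as $\Gamma$-unitaries on finite-dimensional spaces decompose into scalar pieces $(\bar\alpha + \alpha \beta, \beta)$ with $|\alpha| = |\beta| = 1$), and combining the two annihilating polynomials by taking their product — after checking the product's zero set still meets $\G$ and meets $\partial\Gamma$ only in $b\Gamma$ — yields a single $\Gamma$-distinguished polynomial killing $(S,P)$, completing the proof.
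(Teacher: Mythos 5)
Your candidate polynomial $p(z_1,z_2)=\det(A+z_2A^*-z_1I)$ is the right one, and your appeal to Theorem \ref{PalShalit} to see that it is $\Gamma$-distinguished matches the paper. But the proof is not complete: you never actually establish the key identity $p(S,P)=0$, and the several routes you sketch for it either assert what is to be proved or stall on acknowledged difficulties. The ingredient you are missing is the von Neumann-type inequality over the variety cut out by the fundamental operator --- Theorem 4.7 of \cite{PalShalit1} --- which states that for a $\Gamma$-contraction on a finite-dimensional space with fundamental operator $A$,
\[
\|f(S,P)\| \leq \sup\{|f(z_1,z_2)| : \det(A^*+z_2A-z_1I)=0\}
\]
for every polynomial $f$. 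Applying this with $f=p$ gives $p(S,P)=0$ instantly, since $p$ vanishes identically on its own zero set. This one-line step is the entire content of the paper's proof beyond Theorem \ref{PalShalit}, and it sidesteps every issue you raise about relating $A$ to the fundamental operator $A_*$ of $(S^*,P^*)$: no passage to the adjoint pair, no Wold-type splitting, and no identification of defect spaces is needed.

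Your fallback route through a pure/unitary decomposition also contains a concrete error. For a $\Gamma$-unitary summand you propose the annihilating polynomial $z_1-\alpha z_2-\bar\alpha$ with $|\alpha|=1$ and call it $\Gamma$-distinguished; it is not. When $|\alpha|=1$ its zero set is $\{\pi(\bar\alpha,\lambda): \lambda\in\C\}$, which misses $\mathbb{G}_2$ entirely and meets $\partial\Gamma$ at points such as $(\bar\alpha,0)\notin b\Gamma$, so both defining conditions fail. (The $\Gamma$-unitary part can still be handled --- every point of $b\Gamma$ lies on some genuinely $\Gamma$-distinguished variety, e.g.\ of the form $z_1^2=c\,z_2$ --- but not with those linear factors, and in the paper this decomposition argument appears only in the second proof of Theorem \ref{thm_FD}, where it relies on Proposition \ref{prop512} already being known; using it to prove Proposition \ref{prop512} would be circular as you set it up.)
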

\begin{proof}
	It follows from Theorem \ref{PalShalit} that $p(z_1,
	z_2)=\det(A^*+z_2A-z_1I)$ is a $\Gamma$-distinguished polynomial as
	$\omega(A) <1$. By Theorem 4.7 in \cite{PalShalit1}, we have 
	\[
	\|p(S, P)\| \leq \sup\{|p(z_1, z_2)|: \det(A^*+z_2A-z_1I) =0 \}=0.
	\]
	Therefore, the $\Gamma$-distinguished polynomial $p$ annihilates $(S,P)$.
\end{proof}

The hypothesis that $\omega(A)<1$ cannot be dropped in the above proposition and the reason is explained in the following example that appears as Example 4.2 in \cite{Pal_Tomar}.

\begin{eg}\label{3.14}
	Consider the matrix
	\[
	A=\begin{pmatrix}
		0 & 2 & 0\\
		0 & 0 & 0\\
		0 & 0 & 1\\
	\end{pmatrix}.
	\]
	It is not difficult to see that $\omega(A) =1$ and by Theorem \ref{thm_201}, $(A, 0)$ is a $\Gamma$-contraction on $\HS=\C^3$. Let if possible, $q(A, 0)=0$ for some $\Gamma$-distinguished polynomial $q(z_1, z_2)$. By spectral mapping theorem, $\sigma(q(A,0))=\{ q(\lambda , 0)\,:\, \lambda \in \sigma(A)\}=\{0\}$ and so, $q(1, 0) =0$. Clearly, $(1,0) \in \partial \Gamma = \Gamma \setminus \mathbb G_2$ by being the symmetrization of the points $0,1$. As $q$ is $\Gamma$-distinguished, we have that $(1,0) \in Z(q) \cap \partial \Gamma \subseteq b\Gamma$, which is a contradiction as $(1,0)  \notin b\Gamma$. Thus, $(A, 0)$ is not $\Gamma$-distinguished.
	
	\medskip 
	
	Also, the pair $(A, 0)$ cannot admit a $\Gamma$-distinguished $\Gamma$-isometric dilation. Let if possible, $(T, V)$ be a $\Gamma$-isometric dilation of $(A, 0)$ and let $p(T, V)=0$ for a $\Gamma$-distinguished polynomial $p$. Then $p(A, 0)=P_\HS p(T_, V)|_\HS=0$ and so, $(A, 0)$ is $\Gamma$-distinguished, which is a contradiction.
	\qed
\end{eg}

The success of dilation of $\Gamma$-contractions to $\Gamma$-isometries naturally leads to the question if every $\Gamma$-distinguished $\Gamma$-contraction dilates to a $\Gamma$-distinguished $\Gamma$-isometry. While the general problem remains open, the authors of this article provided the characterizations of $\Gamma$-distinguished $\Gamma$-contractions that admit such a dilation. A part of this theorem from \cite{Pal_Tomar} is written below. 

\begin{thm}[\cite{Pal_Tomar}, Theorem 1.7 \&  Proposition 7.1]\label{thm814}	
	Let $(S, P)$ be a $\Gamma$-distinguished $\Gamma$-contraction acting on a Hilbert space $\HS$. Then the following are equivalent:
	\begin{enumerate}
		
		\item $(S, P)$ dilates to a $\Gamma$-distinguished $\Gamma$-isometry;
		\item $(S, P)$ dilates to a $\Gamma$-distinguished $\Gamma$-unitary;
		\item there exists a $\Gamma$-distinguished polynomial $p \in \C[z_1, z_2]$ such that $Z(p) \cap \Gamma$ is a complete spectral set for $(S,P)$, i.e.,
		\[
		\|[f_{ij}(S, P)]_{m \times m}\| \leq \sup\{\|[f_{ij}(z_1, z_2)]_{m\times m}\|  : (z_1, z_2) \in Z(p) \cap \Gamma\}
		\]
		for all matricial functions $[f_{ij}]_{m \times m}$ with each $f_{ij}$ in the rational algebra $\text{Rat}(Z(p) \cap \Gamma)$. 
	\end{enumerate}		
\end{thm}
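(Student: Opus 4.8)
The plan is to run the cycle $(2)\Rightarrow(1)\Rightarrow(3)\Rightarrow(2)$, with the first implication immediate: a $\Gamma$-unitary is in particular a $\Gamma$-isometry, so a $\Gamma$-distinguished $\Gamma$-unitary dilation is already a $\Gamma$-distinguished $\Gamma$-isometric dilation. A preliminary reduction will be used twice. Suppose $(T,V)$ on $\mathcal K\supseteq\HS$ is a $\Gamma$-distinguished $\Gamma$-isometry dilating $(S,P)$, annihilated by a $\Gamma$-distinguished polynomial $p$. Compressing, $p(S,P)=P_\HS p(T,V)|_\HS=0$, so $p$ annihilates $(S,P)$ too. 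Since a $\Gamma$-isometry is the restriction of a $\Gamma$-unitary to a joint invariant subspace, pass to the minimal $\Gamma$-unitary extension $(\tilde T,\tilde U)$ on $\tilde{\mathcal K}\supseteq\mathcal K$: because $C^*(\tilde T,\tilde U)$ is commutative and $p(\tilde T,\tilde U)$ commutes with it, $p(\tilde T,\tilde U)$ vanishes on the dense set $C^*(\tilde T,\tilde U)\mathcal K$, hence $p(\tilde T,\tilde U)=0$; and $(\tilde T,\tilde U)$ still dilates $(S,P)$ on polynomials. Thus from $(1)$ I may assume $(S,P)$ has a $\Gamma$-unitary dilation $(\tilde T,\tilde U)$ with $p(\tilde T,\tilde U)=0$, so $\sigma_T(\tilde T,\tilde U)\subseteq Z(p)\cap b\Gamma$; note also $\sigma_T(S,P)\subseteq Z(p)\cap\Gamma$ by the spectral mapping theorem and the spectral-set property.

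For $(1)\Rightarrow(3)$, set $X:=Z(p)\cap\Gamma$. I first observe that $X$ is polynomially convex: any polynomial $q$ vanishing on $Z(p)$ (say $q=p$) satisfies $\|q\|_{\infty,X}=0$, so the polynomial hull of $X$ lies in $Z(p)$, and it also lies in $\widehat\Gamma=\Gamma$, whence $\widehat X\subseteq Z(p)\cap\Gamma=X$. By the Oka--Weil theorem, every $f\in\mathrm{Rat}(X)$ (holomorphic on a neighbourhood of $X$) is a uniform limit on $X$ of polynomials. Since $\sigma_T(S,P)$ and $\sigma_T(\tilde T,\tilde U)$ both lie in $X$, both $f(S,P)$ and $f(\tilde T,\tilde U)$ are defined, and passing to the limit through polynomials (clearing the denominator and using continuity of the normal functional calculus of $(\tilde T,\tilde U)$) gives $f(S,P)=P_\HS f(\tilde T,\tilde U)|_\HS$. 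Carrying this out entrywise for a matricial $[f_{ij}]$ over $\mathrm{Rat}(X)$, and using that the functional calculus $C(\sigma_T(\tilde T,\tilde U))\to\mathcal B(\tilde{\mathcal K})$ of the commuting normal pair is a unital $*$-homomorphism, I obtain
\[
\|[f_{ij}(S,P)]\|\le\|[f_{ij}(\tilde T,\tilde U)]\|=\sup_{z\in\sigma_T(\tilde T,\tilde U)}\|[f_{ij}(z)]\|\le\sup_{z\in X}\|[f_{ij}(z)]\|,
\]
so $Z(p)\cap\Gamma$ is a complete spectral set for $(S,P)$.

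For $(3)\Rightarrow(2)$, $X=Z(p)\cap\Gamma$ being polynomially convex, the uniform closure $A(X)$ of $\mathrm{Rat}(X)$ coincides with the uniform closure of polynomials on $X$, and the complete-spectral-set hypothesis says exactly that the unital homomorphism $\rho\colon A(X)\to\mathcal B(\HS)$, $f\mapsto f(S,P)$, is completely contractive. Arveson's dilation theorem (Arveson extension plus Stinespring, with the minimality reduction that places the representation on the Shilov boundary) then yields $\tilde{\mathcal K}\supseteq\HS$ and a unital $*$-representation $\pi$ of $C(\partial_{\mathrm{Sh}}A(X))$ with $\rho(f)=P_\HS\pi(f)|_\HS$; setting $(\tilde T,\tilde U)=(\pi(z_1),\pi(z_2))$ gives a commuting pair of normals dilating $(S,P)$ with $\sigma_T(\tilde T,\tilde U)\subseteq\partial_{\mathrm{Sh}}A(X)$. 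The decisive point is that $\partial_{\mathrm{Sh}}A(X)\subseteq b\Gamma$: writing $V=Z(p)\cap\G$, the hypothesis that $p$ is $\Gamma$-distinguished gives $X\setminus V=Z(p)\cap\partial\Gamma=Z(p)\cap b\Gamma$, and the maximum modulus principle on the bounded analytic variety $V$ forces $\|f\|_{\infty,X}=\|f\|_{\infty,X\setminus V}$ for all $f\in A(X)$; hence the compact set $X\setminus V\subseteq b\Gamma$ is a closed norming set for $A(X)$ and so contains its Shilov boundary. Therefore $(\tilde T,\tilde U)$ is a $\Gamma$-unitary, and since $\sigma_T(\tilde T,\tilde U)\subseteq X\subseteq Z(p)$ the normal functional calculus gives $p(\tilde T,\tilde U)=0$, so $(S,P)$ dilates to a $\Gamma$-distinguished $\Gamma$-unitary, closing the cycle.

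The step I expect to be the main obstacle is $(3)\Rightarrow(2)$: correctly invoking Arveson's boundary/dilation machinery for the function algebra $A(Z(p)\cap\Gamma)$ and, above all, identifying its Shilov boundary as exactly $Z(p)\cap b\Gamma$, i.e. ruling out that the normal dilation has part of its joint spectrum on the interior distinguished variety $Z(p)\cap\G$. This is precisely where the defining property $Z(p)\cap\partial\Gamma=Z(p)\cap b\Gamma$ of $\Gamma$-distinguished polynomials, combined with the maximum principle on analytic varieties, is indispensable; a secondary technical point, handled by the polynomial convexity of $Z(p)\cap\Gamma$, is upgrading the polynomial dilation identity to all of $\mathrm{Rat}(Z(p)\cap\Gamma)$.
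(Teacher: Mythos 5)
This theorem is not proved in the paper at all: it is imported verbatim from \cite{Pal_Tomar} (Theorem 1.7 and Proposition 7.1), so there is no in-text argument to compare your proposal against. Judged on its own terms, your cycle $(2)\Rightarrow(1)\Rightarrow(3)\Rightarrow(2)$ is correct and is the standard Agler--McCarthy/Pal--Shalit dilation scheme, which is almost certainly what the cited source does: the Fuglede-type argument that an annihilating polynomial of a $\Gamma$-isometry passes to its minimal $\Gamma$-unitary extension is exactly the mechanism this paper itself invokes in the proof of Theorem \ref{8.4}, and the polynomial convexity of $Z(p)\cap\Gamma$ that you establish directly is the content of Lemma 5.5 of \cite{Pal_Tomar}, which the paper quotes in proving Theorem \ref{thm_FD}. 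Two steps are compressed but repairable with standard facts: (i) in $(1)\Rightarrow(3)$, identifying $\lim_n q_n(S,P)$ with the Taylor-calculus value $f(S,P)$ for $f=g/h\in\mathrm{Rat}(Z(p)\cap\Gamma)$ requires the clearing-denominators argument you allude to, using that $h(S,P)$ is invertible because $\sigma_T(S,P)\subseteq Z(p)\cap\Gamma$; (ii) in $(3)\Rightarrow(2)$, the claim that $Z(p)\cap b\Gamma$ is a norming set for $A(Z(p)\cap\Gamma)$ needs the observation that every irreducible component of the affine curve $Z(p)$ meeting $\G$ is unbounded, so its intersection with $\G$ is non-compact and its closure meets $\partial\Gamma$; only then does the maximum principle on (possibly singular) analytic sets push the supremum to $Z(p)\cap\partial\Gamma=Z(p)\cap b\Gamma$. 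With those two points made explicit, the argument is complete.
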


Capitalizing Theorem \ref{thm814}, our next result shows that every $\Gamma$-contraction on a finite-dimensional Hilbert space admits dilation to a $\Gamma$-distinguished $\Gamma$-unitary if its fundamental operator has numerical radius strictly less than one. In fact, we present two different proofs of this result.

\begin{thm}\label{thm_FD}
	Let $(S,P)$ be a $\Gamma$-contraction on a finite-dimensional Hilbert space $\mathcal{H}$ and let $A$ be the fundamental operator of $(S,P)$ with  $\omega(A)<1$. Then $(S,P)$ dilates to a $\Gamma$-distinguished $\Gamma$-unitary.
\end{thm}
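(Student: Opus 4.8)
The plan is to deduce the statement from Theorem~\ref{thm814}, together with the observation, already contained in Proposition~\ref{prop512}, that $\omega(A)<1$ makes $(S,P)$ a $\Gamma$-distinguished $\Gamma$-contraction. Set $p(z_1,z_2)=\det(A^*+z_2A-z_1I)$; by Theorem~\ref{PalShalit} this is a $\Gamma$-distinguished polynomial whose zero set meets $\G$ in the distinguished variety $W_A=Z(p)\cap\G$, and by the proof of Proposition~\ref{prop512} one has $p(S,P)=0$. In view of the implication $(3)\Rightarrow(2)$ in Theorem~\ref{thm814}, the first proof then reduces to checking that $Z(p)\cap\Gamma$ is a \emph{complete} spectral set for $(S,P)$, i.e.
\[
\|[f_{ij}(S,P)]_{m\times m}\|\le\sup\{\|[f_{ij}(z_1,z_2)]_{m\times m}\|:(z_1,z_2)\in Z(p)\cap\Gamma\}
\]
for every $[f_{ij}]\in M_m(\mathrm{Rat}(Z(p)\cap\Gamma))$. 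This is the matrix-coefficient strengthening of Theorem~4.7 in \cite{PalShalit1} (equivalently, the fact proved there that on a finite-dimensional space a $\Gamma$-contraction with $\omega(A)<1$ admits a dilation to a pair of commuting normal operators with joint spectrum on $\partial W_A\subseteq b\Gamma$, which by the spectral theorem is a $\Gamma$-distinguished $\Gamma$-unitary). Feeding this into Theorem~\ref{thm814} produces the desired $\Gamma$-distinguished $\Gamma$-unitary dilation.

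For a second, more constructive proof I would route through the Sch\"affer-type $\Gamma$-isometric dilation $(T_A,V_0)$ of \cite{Pal8}. Since $\dim\HS<\infty$ we have $\dim\mathcal D_P<\infty$, and since $r(A)\le\omega(A)<1$, Theorem~\ref{thm:Char-f} applies and shows that the $\Gamma$-isometry $(T_A,V_0)$ dilating $(S,P)$ is itself $\Gamma$-distinguished. Now $(T_A,V_0)$ is a $\Gamma$-distinguished $\Gamma$-contraction which trivially dilates to a $\Gamma$-distinguished $\Gamma$-isometry, namely itself, so Theorem~\ref{thm814} supplies a Hilbert space $\mathcal K\supseteq\HS$ and a $\Gamma$-distinguished $\Gamma$-unitary $(\widetilde T,\widetilde U)$ on $\mathcal K$ dilating $(T_A,V_0)$. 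A compression of a compression is a compression: for $f\in\mathrm{Rat}(\Gamma)$ one has $f(S,P)=P_\HS f(T_A,V_0)|_\HS=P_\HS f(\widetilde T,\widetilde U)|_\HS$, so $(\widetilde T,\widetilde U)$ is a $\Gamma$-distinguished $\Gamma$-unitary dilation of $(S,P)$.

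The final step of the second proof can be made explicit via the Wold-type decomposition of \cite{AglerII16}. Write $(T_A,V_0)=(T_1,V_1)\oplus(T_2,V_2)$ with $(T_1,V_1)$ a $\Gamma$-unitary and $(T_2,V_2)$ a pure $\Gamma$-isometry; both are $\Gamma$-distinguished, being direct summands of the $\Gamma$-distinguished pair $(T_A,V_0)$. By Theorem~\ref{thm:modelpure}, $(T_2,V_2)$ is unitarily equivalent to $(T_{G^*+Gz},T_z)$ on some $H^2(\mathcal E)$ with $\dim\mathcal E<\infty$ (as $\HS$ is finite-dimensional) and $G\in\mathcal B(\mathcal E)$ the fundamental operator of $(T_2^*,V_2^*)$; since $(T_2,V_2)$ is $\Gamma$-distinguished, Theorem~\ref{r(A)<1_(S,P)_distt.} forces $r(G)<1$. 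The pair $(M_{G^*+Gz},M_z)$ on $L^2(\mathcal E)$ is a $\Gamma$-unitary dilating $(T_2,V_2)$, and it is annihilated by the polynomial $\det(G^*+z_2G-z_1I)$, because its evaluation at $(G^*+zG,\,z)$ equals $\det\bigl((G^*+zG)-(G^*+zG)\bigr)=0$ for every $z\in\T$ (Cayley--Hamilton applied to the matrix $G^*+zG$); moreover $\det(G^*+z_2G-z_1I)$ is $\Gamma$-distinguished since $\omega(G)\le1$ and $\sigma(G)\cap\T=\emptyset$ (Theorem~\ref{PalShalit}; cf. Theorem~2.5 in \cite{Das}). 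Multiplying this polynomial by a $\Gamma$-distinguished polynomial annihilating $(T_1,V_1)$ yields a single $\Gamma$-distinguished polynomial annihilating $(T_1,V_1)\oplus(M_{G^*+Gz},M_z)$, the sought $\Gamma$-distinguished $\Gamma$-unitary dilation of $(T_A,V_0)$, hence of $(S,P)$.

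The place where the real content sits is the complete, as opposed to merely scalar, von Neumann inequality on $Z(p)\cap\Gamma$ used in the first proof, equivalently the existence of a $\partial W_A$-normal dilation for a finite-dimensional $\Gamma$-contraction with $\omega(A)<1$; this is exactly where finite-dimensionality of $\HS$ and the constrained-dilation machinery of \cite{PalShalit1} are indispensable. In the second proof the corresponding nontrivial input is concealed inside Theorem~\ref{thm:Char-f} (the production of a $\Gamma$-distinguished polynomial annihilating $(T_A,V_0)$ when $r(A)<1$); the remaining ingredients --- triviality of self-dilation, transitivity of dilation, and the Wold decomposition of a $\Gamma$-isometry --- are routine.
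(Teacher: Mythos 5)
Your argument is correct, and it splits naturally into two halves. Your first proof is essentially the paper's first proof: the polynomial $p(z_1,z_2)=\det(A^*+z_2A-z_1I)$, Theorem \ref{PalShalit}, the matricial von Neumann inequality of Theorem 4.7 in \cite{PalShalit1}, and then Theorem \ref{thm814}. The only point you gloss over is the passage from matrix-valued \emph{polynomials} (which is what Theorem 4.7 of \cite{PalShalit1} controls) to matrix-valued \emph{rational} functions in $\mathrm{Rat}(Z(p)\cap\Gamma)$, i.e.\ to a complete spectral set in the sense required by Theorem \ref{thm814}; the paper closes this gap by citing the polynomial convexity of $Z(p)\cap\Gamma$ (Lemma 5.5 of \cite{Pal_Tomar}), and you should do the same. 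Your second proof, however, takes a genuinely different route from the paper's second proof. The paper works directly on $\HS$: it simultaneously upper-triangularizes $S$ and $P$ according to the splitting $\sigma(P)\subseteq\D\,\cup\,\T$, shows the off-diagonal blocks vanish so that $(S,P)=(S_1,P_1)\oplus(S_2,P_2)$ with $(S_2,P_2)$ already a $\Gamma$-distinguished $\Gamma$-unitary, and then invokes Theorem 7.17 of \cite{Pal_Tomar} for the pure block. You instead pass first to the Sch\"affer-type dilation $(T_A,V_0)$, certify via Theorem \ref{thm:Char-f} (legitimately applicable since $\dim\mathcal D_P<\infty$ and $r(A)\le\omega(A)<1$, and not circular, as that theorem rests only on Theorem \ref{r(A)<1_(S,P)_distt.} and Lemma \ref{lem1014}) that this $\Gamma$-isometry is itself $\Gamma$-distinguished, and then extend its pure Wold summand to the $L^2$-model $\Gamma$-unitary, which Cayley--Hamilton shows is annihilated by $\det(G^*+z_2G-z_1I)$. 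What each approach buys: the paper's decomposition is intrinsic to $(S,P)$ and self-contained relative to \cite{Pal_Tomar}, while yours produces a concrete named dilation --- it shows the minimal $\Gamma$-isometric dilation $(T_A,V_0)$ is already $\Gamma$-distinguished under these hypotheses, which ties the finite-dimensional result of Section \ref{sec05} to the theme of Section \ref{minimal} --- at the cost of a forward reference within the paper's organization. Both your transitivity-of-compressions step and the claim that a product of $\Gamma$-distinguished polynomials is $\Gamma$-distinguished are correct and are also used implicitly by the paper.
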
	

\begin{proof} We have by Theorem \ref{PalShalit} that $p(z_1, z_2)=\det(A^*+z_2A-z_1I)$ is a $\Gamma$-distinguished polynomial as $\omega(A)<1$. It follows from Theorem 4.7 in \cite{PalShalit1} that 
	\[
	\|f(S, P)\| \leq \max\{\|f(z_1, z_2)\|: (z_1, z_2) \in Z(p) \cap \Gamma \}
	\]
	for every matrix valued polynomial $f$. Since $Z(p) \cap \Gamma$ is polynomially convex (see Lemma 5.5 in \cite{Pal_Tomar}), we have that $Z(p) \cap \Gamma$ is a complete spectral set for $(S, P)$. It now follows from Theorem \ref{thm814} that $(S, P)$ dilates to a $\Gamma$-distinguished $\Gamma$-unitary.	
\end{proof} 

For an alternative proof, we proceed as in the proof of Theorem 4.8 in \cite{DasII}. 

\begin{proof} 	
	It follows from Proposition \ref{prop512} that $(S, P)$ is $\Gamma$-distinguished. Thus, the joint spectrum $\sigma_T(S, P)$ does not intersect $\pi(\T \times \D)$ or $\pi(\D \times \T)$. By simultaneous upper triangularisation, we have (up to a unitarily equivalence)
	\[
	S=\begin{bmatrix}
		S_1 & S_3 \\
		0 & S_2
	\end{bmatrix} \quad \text{and} \quad P=\begin{bmatrix}
		P_1 & P_3 \\
		0 & P_2
	\end{bmatrix},
	\]
	with respect to a decomposition $\HS=\HS_1 \oplus \HS_2$ such that $S_1, S_2, P_1, P_2$ are square matrices with $\sigma(P_1) \subseteq \D$ and $\sigma(P_2) \subseteq \T$. Since $P_2$ is upper triangular with $\|P_2\|\le1$ and spectrum on $\T$, it must be unitary. Applying the same reasoning and using $\|P\|\le1$, we have that $P_3=0$. Furthermore, Lemma 2.1 in \cite{DasII} ensures that $P_1$ and $P_1^*$ are pure contractions as $\sigma(P_1) \subseteq \D$. By the commutativity of $S$ and $P$, we have $S_1P_3+S_3P_2=P_1S_3+P_3S_2$ and so, $S_3P_2=P_1S_3$. Since $P_1$ is a pure contraction and $P_2$ is unitary, it follows that $S_3=0$. Putting everything together, we have
	\[
	S=\begin{bmatrix}
		S_1 & 0 \\
		0 & S_2
	\end{bmatrix} \quad \text{and} \quad P=\begin{bmatrix}
		P_1 & 0 \\
		0 & P_2
	\end{bmatrix},
	\]
	where $P_1, P_1^*$ are pure contractions and $P_2$ is a unitary. Consequently, $(S_1^*, P_1^*)$ is a pure $\Gamma$-contraction. Since $P_2$ is a unitary, we have $D_{P_2}=0$, and by Theorem \ref{thm_201}, $S_2 = S_2^* P_2$. As $P_2$ is a diagonal unitary and $S_2$ is upper triangular, the relation $S_2 = S_2^* P_2$ implies that $S_2$ is also diagonal. Therefore, $(S_2, P_2)$ is a normal $\Gamma$-contraction with $\sigma_T(S_2, P_2) \subseteq b\Gamma$, and hence $(S_2, P_2)$ is a $\Gamma$-unitary.
		 Evidently, $(S_2, P_2)$ is also $\Gamma$-distinguished since $(S, P)$ is $\Gamma$-distinguished. Since the fundamental operator of $(S, P)$ has numerical radius strictly less than $1$, the same holds for the fundamental operator of $(S_1, P_1)$. It follows from Theorem 7.17 in \cite{Pal_Tomar} that $(S_1, P_1)$ admits a dilation to a $\Gamma$-distinguished $\Gamma$-unitary $(T, U)$ on a space $\mathcal{L}$ containing $\HS_1$. Consider the commuting operator pair given by
	\[
	V_1=\begin{bmatrix}
		T & 0 \\
		0 & S_2
	\end{bmatrix} \quad \text{and} \quad V_2=\begin{bmatrix}
		U & 0 \\
		0 & P_2
	\end{bmatrix}
	\]
	on the Hilbert space $\mathcal{K}=\mathcal{L}\oplus \mathcal{H}_2$. Clearly, $(V_1, V_2)$ is a $\Gamma$-unitary that dilates $(S, P)$. Moreover, if $f$ and $g$ are $\Gamma$-distinguished polynomials, annihilating $(T, U)$ and $(S_2, P_2)$ respectively, then the $\Gamma$-distinguished polynomial $fg$ annihilates $(V_1, V_2)$. The proof is now complete.
\end{proof}	

We conclude this section by presenting a few concrete examples of $\Gamma$-distinguished polynomials.

\begin{eg}
	For $|c|<2$, define $A=\begin{pmatrix}
		0 & 0 \\
		c & 0
	\end{pmatrix}$. Then $\det(A+z_2A^*-z_1I)=z_1^2-|c|^2z_2$. By Theorem \ref{PalShalit},  $p(z_1, z_2)=z_1^2-|c|^2z_2$ is a $\Gamma$-distinguished polynomial since $\omega(A) =|c|\slash 2<1$. 
\end{eg}

\begin{eg} \label{3.4}
	This example also appears in \cite{Pal_Tomar}. For the polynomial $q(z_1, z_2)=4z_2- z_1^2$, it is clear that $(0, 0) \in Z(q) \cap \G$. Let $(z_1, z_2) \in Z(q) \cap \partial \G$. Then $(z_1, z_2)=(\alpha+\beta, \alpha \beta)$ for some $(\alpha, \beta) \in \partial \D^2$. Since $z_1^2-4z_2=(\alpha+\beta)^2-4\alpha\beta=0$, we have that $\alpha=\beta$. So, $(\alpha, \beta) \in \T^2$ and  $(z_1, z_2) \in b\Gamma$. Consequently, $Z(q) \cap \partial \G \subseteq b\Gamma$ and $q(z_1, z_2)$ is $\Gamma$-distinguished. \qed 
\end{eg}

\section{Minimal dilation of a $\Gamma$-distinguished $\Gamma$-contraction}\label{minimal}	
		
	\vspace{0.1cm}

\noindent Recall that a $\Gamma$-contraction $(S, P)$ on a Hilbert space $\HS$ is said to admit a $\Gamma$-isometric dilation (or $\Gamma$-unitary dilation) if there exists a $\Gamma$-isometry (or $\Gamma$-unitary) $(T, V)$ acting on a Hilbert space $\mathcal{K}$ containing $\HS$ such that 
\begin{equation}\label{eqn_3001}
f(S, P)=P_\HS f(T, V)|_{\HS}
\end{equation}
for all $f \in \text{Rat}(\Gamma)$. Since $\Gamma$ is polynomially convex, \eqref{eqn_3001} is equivalent to saying that $p(S, P)=P_\HS p(T, V)|_{\HS}$ for all $p \in \C[z_1, z_2]$. Furthermore, a $\Gamma$-isometric dilation $(T, V)$ is called minimal if $\mathcal{K}=\overline{\text{span}}\{p(T, V)h: p \in \C[z_1, z_2], \ h \in \HS \}$. It was shown by Agler and Young in \cite{AglerI15} that every $\Gamma$-contraction admits a $\Gamma$-isometric dilation. Also, a minimal $\Gamma$-isometric dilation of a $\Gamma$-contraction was explicitly constructed in \cite{Pal8} with the help of the fundamental operator (see Part (3) of Theorem \ref{thm_201}). Also, that dilation was minimal and acted on the minimal isometric dilation space of $P$. In this section, we study in detail when this particular minimal $\Gamma$-isometric dilation of a $\Gamma$-contraction $(S,P)$ is annihilated by a $\Gamma$-distinguished polynomial, especially when dimension of $\mathcal D_P$ or $\mathcal D_{P^*}$ is finite. We mention the dilation theorem from \cite{Pal8}, which also described a new way of characterizing $\Gamma$-contractions.

	\begin{thm}[\cite{Pal8}, Theorem 4.3]\label{Tirtha-Pal} 
		Let $(S,P)$ be a $\Gamma$-contraction on a Hilbert space $\mathcal{H}$. Let $A$ be the unique fundamental operator of the fundamental equation $S-S^*P=D_PAD_P$. Consider the operators $T_A, V_0$ defined on $\mathcal{H}\bigoplus \ell^2(\mathcal{D}_P)$ by 
		\begin{equation*}
			\begin{split}
				& T_A(x_0, x_1, x_2, \dots)=(Sx_0, \; A^*D_Px_0+Ax_1, \; A^*x_1+Ax_2, \; A^*x_2+Ax_3, \dots),\\
				&		V_0(x_0, x_1, x_2, \dots)=(Px_0, \; D_Px_0, \; x_1, \; x_2, \dots).  \\
			\end{split}		
		\end{equation*}
		Then up to unitary $(T_A, V_0)$ is the unique $\Gamma$-isometric dilation of $(S,P)$ on $\mathcal H \oplus \ell^2(\mathcal D_P)$. 
	\end{thm}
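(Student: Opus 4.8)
The plan is to verify directly, using the explicit formulas, that $(T_A,V_0)$ is a $\Gamma$-isometry dilating $(S,P)$, and then to read off uniqueness (among minimal dilations, which is what the statement asserts) from the rigidity of minimal isometric dilations and of the fundamental operator. First I would record the structural facts. The operator $V_0$ is exactly the Sch\"affer minimal isometric dilation of the contraction $P$ on $\mathcal H\oplus\ell^2(\mathcal D_P)$, so $V_0$ is an isometry and $\overline{\bigvee_{n\ge0}V_0^{\,n}\mathcal H}=\mathcal H\oplus\ell^2(\mathcal D_P)$; moreover both $T_A$ and $V_0$ leave $\mathcal H^\perp=\{0\}\oplus\ell^2(\mathcal D_P)$ invariant, so relative to $\mathcal H\oplus\mathcal H^\perp$ each is block lower triangular with $(1,1)$-corner $S$, respectively $P$. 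Hence $\mathcal H$ is co-invariant, $P_{\mathcal H}T_A|_{\mathcal H}=S$, $P_{\mathcal H}V_0|_{\mathcal H}=P$, and for every polynomial $p$ the operator $p(T_A,V_0)$ is again lower triangular with corner $p(S,P)$, so $p(S,P)=P_{\mathcal H}p(T_A,V_0)|_{\mathcal H}$; by polynomial convexity of $\Gamma$ this extends to all $f\in\text{Rat}(\Gamma)$ once $(T_A,V_0)$ is known to be a $\Gamma$-contraction. Minimality of the dilation is automatic, since the subspaces $V_0^{\,n}\mathcal H$ already span everything.

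The core is to show $(T_A,V_0)$ is a $\Gamma$-isometry, for which I would invoke the characterisation of Agler and Young \cite{AglerI15}: a commuting pair $(T,V)$ is a $\Gamma$-isometry precisely when $V$ is an isometry, $\|T\|\le2$, and $T=T^*V$. Three things must be checked. (a) Commutativity $T_AV_0=V_0T_A$: on comparing the two sides coordinate by coordinate, all slots agree automatically except one, which demands the operator identity $D_PS=AD_P+A^*D_PP$ on $\mathcal H$; this is the one point at which being a $\Gamma$-contraction, rather than merely satisfying $\|S\|\le2,\ \|P\|\le1$, is used, and I would quote it as a known structural identity for $\Gamma$-contractions with fundamental operator $A$. (b) The relation $T_A=T_A^*V_0$: writing both sides as $2\times2$ operator matrices over $\mathcal H\oplus\ell^2(\mathcal D_P)$, the $(1,1)$-entry is precisely the fundamental equation $S-S^*P=D_PAD_P$, and the other three entries reduce to elementary identities for the shift $T_z$ on $H^2(\mathcal D_P)$. (c) The bound $\|T_A\|\le2$: from (b), $T_AT_A^*=T_A^*V_0V_0^*T_A\le T_A^*T_A$, so $T_A$ is hyponormal and $\|T_A\|=r(T_A)$; block lower triangularity gives $\sigma(T_A)\subseteq\sigma(S)\cup\sigma(W)$, where $W$ is the analytic Toeplitz operator on $H^2(\mathcal D_P)$ with symbol $A+A^*z$; now $r(S)\le2$ because $\sigma(S)$ is the first-coordinate projection of $\sigma_T(S,P)\subseteq\Gamma$, hence lies in $\{|z|\le2\}$, while $r(W)\le\|W\|=\sup_{|z|=1}\|A+A^*z\|$ and, writing $w^2=\bar z$, one has $A+A^*z=2w^{-1}\mathrm{Re}(wA)$, so $\|A+A^*z\|=2\|\mathrm{Re}(wA)\|\le2\,\omega(wA)=2\,\omega(A)\le2$. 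Thus $\|T_A\|=r(T_A)\le2$, and $(T_A,V_0)$ is a $\Gamma$-isometry; together with the first paragraph, it is a $\Gamma$-isometric dilation of $(S,P)$.

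For uniqueness, let $(T',V')$ be another minimal $\Gamma$-isometric dilation of $(S,P)$ on $\mathcal H\oplus\ell^2(\mathcal D_P)$. Minimality forces $V'$ to be a minimal isometric dilation of $P$, so by uniqueness of the latter there is a unitary fixing $\mathcal H$ that carries $V'$ to $V_0$; after this identification we may take $V'=V_0$. Then $T'$ commutes with $V_0$, satisfies $T'=T'^*V_0$, and compresses to $S$ on $\mathcal H$; unwinding these relations on $\mathcal H\oplus\ell^2(\mathcal D_P)$ reconstructs the block pattern of $T_A$, the only ``free'' datum being the solution of $S-S^*P=D_PXD_P$, which by Theorem~\ref{thm_201} is the unique operator $A$. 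Hence $T'=T_A$.

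I expect step (a) to be the main obstacle: establishing, or justifying the use of, the identity $D_PS=AD_P+A^*D_PP$, which is where the full strength of the $\Gamma$-contraction hypothesis really enters (it does not follow from the fundamental equation and the norm bounds alone). Everything else is either a direct computation with the explicit matrices or a soft argument — co-invariance; hyponormality together with the block-triangular form of the spectrum for $\|T_A\|\le2$; uniqueness of minimal isometric dilations. The other mildly delicate point is the norm bound $\|T_A\|\le2$, which is clean via hyponormality but would be fussy if attacked head-on through a Schur-complement positivity computation using $\omega(A)\le1$.
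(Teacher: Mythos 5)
The paper does not prove this theorem; it is quoted verbatim from \cite{Pal8} (Theorem 4.3), so there is no in-house proof to compare against. Your reconstruction is essentially correct and follows the same overall strategy as the original source: verify the Agler--Young criteria (commutativity, $V_0$ an isometry, $T_A=T_A^*V_0$, $\|T_A\|\le 2$), observe that $\mathcal H^{\perp}$ is jointly invariant so that compressions of polynomials behave correctly, and get minimality for free from the Sch\"affer construction. Two remarks. First, your route to $\|T_A\|\le 2$ via hyponormality ($T_AT_A^*=T_A^*V_0V_0^*T_A\le T_A^*T_A$, hence $\|T_A\|=r(T_A)\le\max\{r(S),\sup_{|z|=1}\|A+A^*z\|\}\le 2$) is a clean alternative to the direct norm estimate used in \cite{Pal8} (and mimicked in Lemma \ref{A1.2} of this paper); both hinge on $\omega(A)\le 1$ giving $\|A+A^*z\|\le 2$ on $\mathbb{T}$. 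Second, the step you single out as the main obstacle --- the identity $D_PS=AD_P+A^*D_PP$ --- is in fact easier than you suggest: it \emph{does} follow from the fundamental equation together with $SP=PS$ alone. Indeed,
\begin{equation*}
D_P\bigl(AD_P+A^*D_PP\bigr)=(S-S^*P)+(S^*-P^*S)P=S-P^*SP=S-P^*PS=D_P^2S,
\end{equation*}
and since both $D_PS$ and $AD_P+A^*D_PP$ take values in $\mathcal D_P$, on which $D_P$ is injective, the identity follows. Your uniqueness paragraph is a sketch rather than a proof --- in particular, the assertion that minimality of the $\Gamma$-isometric dilation forces $V'$ to be the minimal isometric dilation of $P$ deserves justification, and ``unwinding these relations reconstructs the block pattern'' compresses a genuine block-matrix computation --- but it matches the intended meaning of the uniqueness clause in \cite{Pal8} (uniqueness among dilations living over the minimal isometric dilation of $P$), so I would not count it as a gap in substance.
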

	
For a $\Gamma$-contraction $(S, P)$ on a space $\HS$, the $\Gamma$-isometric dilation $(T_A, V_0)$ on $\mathcal H \oplus \ell^2(\mathcal D_P)$ constructed in Theorem \ref{Tirtha-Pal} is minimal, i.e., 
	$
	\mathcal H \oplus \ell^2(\mathcal D_P)=\overline{\text{span}}\{p(T_A, V_0)h: p \in \C[z_1, z_2], \ h \in \HS \}.
	$
Note that the operators $T_A, V_0$ have the following matrix form with respect to the decomposition $\mathcal H \oplus \mathcal D_P \oplus \mathcal D_P \oplus \dotsc$ of the space $\mathcal{H} \oplus  \ell^2(\mathcal D_P)$:
	
	\[
	T_A=  \begin{bmatrix} 
		S & 0 & 0 & 0 & \dots \\
		A^*D_{P} & A & 0 & 0 & \dots \\
		0 & A^* & A & 0 & \dots\\
		0 & 0 & A^* & A & \dots\\
		\vdots & \vdots & \vdots & \ddots & \ddots \\
	\end{bmatrix} \quad \text{and}  \quad 
	V_0=  \begin{bmatrix} 
		P & 0 & 0 & 0 & \dots \\
		D_{P} & 0 & 0 & 0 & \dots \\
		0 & I & 0 & 0 & \dots\\
		0 & 0 & I & 0 & \dots\\
		\vdots & \vdots & \vdots & \ddots & \ddots \\
	\end{bmatrix}.
	\]
	The pair $(T_A, V_0)$ can also have the following $2 \times 2$ block matrix representation.
	\begin{equation}\label{T_A, V_0}
		T_A=  \begin{bmatrix} 
			S & 0  \\
			C & D \\
		\end{bmatrix}\quad \text{and} \quad V_0=  \begin{bmatrix} 
			P & 0 \\
			B & E \\
		\end{bmatrix},
	\end{equation}
	where $(D,E)$ is a commuting pair of operators on $\ell^2(\mathcal{D}_P)$ defined by
	\begin{equation}\label{D_E}
		D:=  \begin{bmatrix} 
			A & 0 & 0 &  \dots \\
			A^* & A & 0  & \dots \\
			0 & A^* & A &  \dots\\
			\vdots & \vdots & \ddots & \ddots \\
		\end{bmatrix} \quad \text{and} \quad 		
		E:=  \begin{bmatrix} 
			0 & 0 & 0 &  \dots \\
			I & 0 & 0 &  \dots \\
			0 & I & 0 &  \dots\\
			\vdots & \vdots & \ddots & \ddots \\
		\end{bmatrix}.
	\end{equation}
	Note that $(D,E)$ on $\ell^2(\mathcal{D}_P)$ is unitarily equivalent to $(T_{A+A^*z},T_z)$ on $H^2(\mathcal{D}_P)$, where $\omega(A) \leq 1$. Indeed, this is a standard model for a pure $\Gamma$-isometry, viz. Theorem \ref{thm:modelpure}. In this section, we mainly investigate the following questions.
	
	\medskip
	
\noindent $Q1$.  Is the pair $(T_A,V_0)$ always $\Gamma$-distinguished?

		\vspace{0.2cm}
	
\noindent $Q2$. Given a $\Gamma$-distinguished $\Gamma$-contraction $(S, P)$, is  the $\Gamma$-isometry $(T_A, V_0)$ always $\Gamma$-distinguished?
	
		\vspace{0.2cm}
	
\noindent $Q3$.  If $(S,P)$ and $(T_A,V_0)$ are both $\Gamma$-distinguished, then does every $\Gamma$-distinguished polynomial annihilating $(S,P)$ also annihilate $(T_A,V_0)?$

\medskip

About $Q1$, we would like to say that if this is to happen then $(S,P)$ is also $\Gamma$-distinguished, because, for every $f \in \mathbb{C}[z_1, z_2]$ we have
	$
	f(S, P)=P_\mathcal{H}f(T_A, V_0)|_\mathcal{H}.
	$
	Since every $\Gamma$-contraction $(S,P)$ dilates to such a $\Gamma$-isometry $(T_A,V_0)$ as in (\ref{D_E}), it is evident that the answer to this question is negative in general. There are $\Gamma$-contractions which are not $\Gamma$-distinguished as is shown in Example \ref{3.14}. We now turn to $Q2$ and $Q3$, for which we need the following lemma.

	\begin{lem}[\cite{Pal_Tomar}, Lemma 7.13]\label{lem1014}
		Let $\mathcal{H}_1$ and $\HS_2$ be Hilbert spaces. Let 
		\[
		(V_1, V_2)=\left(\begin{bmatrix}
			T_1 & 0 \\
			C_1 & D_1
			
		\end{bmatrix}, \begin{bmatrix}
			T_2 & 0 \\
			C_2 & D_2
		\end{bmatrix}\right)
		\] 
		be a commuting pair of operators on $\mathcal{H}_1 \oplus \mathcal{H}_2$. If $f(T_1, T_2)=0=g(D_1, D_2)$ for $f, g \in \C[z_1, z_2]$, then $f(V_1, V_2)g(V_1, V_2)=0$. Moreover, $(V_1, V_2)$ is a $\Gamma$-distinguished $\Gamma$-contraction if and only if both $(T_1, T_2)$ and $(D_1, D_2)$ are $\Gamma$-distinguished $\Gamma$-contractions. 
	\end{lem}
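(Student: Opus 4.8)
The plan is to read everything off the block lower-triangular form of $(V_1,V_2)$. The first, elementary step is to note that the map sending a block lower-triangular $2\times2$ operator matrix to its $(1,1)$-corner and the one sending it to its $(2,2)$-corner are both unital algebra homomorphisms. Consequently, for every $p\in\mathbb C[z_1,z_2]$,
\[
p(V_1,V_2)=\begin{bmatrix} p(T_1,T_2) & 0 \\ X_p & p(D_1,D_2)\end{bmatrix}
\]
for a suitable $X_p\in\mathcal B(\mathcal H_1,\mathcal H_2)$, and multiplying two such matrices shows that $p\mapsto X_p$ is linear and satisfies the cocycle identity $X_{pq}=X_p\,q(T_1,T_2)+p(D_1,D_2)\,X_q$. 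This identity is the engine of the whole argument.

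For the first assertion I would invoke commutativity of polynomial multiplication: $f(V_1,V_2)g(V_1,V_2)=(fg)(V_1,V_2)=(gf)(V_1,V_2)$, whose $(1,1)$, $(2,2)$ and $(2,1)$ entries are, respectively, $(gf)(T_1,T_2)=g(T_1,T_2)f(T_1,T_2)=0$, $(fg)(D_1,D_2)=f(D_1,D_2)g(D_1,D_2)=0$, and $X_{gf}=X_g\,f(T_1,T_2)+g(D_1,D_2)\,X_f=0$. Hence $f(V_1,V_2)g(V_1,V_2)=0$.

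For the ``moreover'' part I would work under the (standing, and in the applications automatic) assumption that $(V_1,V_2)$ is itself a $\Gamma$-contraction, since the block form alone places no bound on the corner entries $C_1,C_2$. The subspace $\{0\}\oplus\mathcal H_2$ is joint invariant for $(V_1,V_2)$ with restriction $(D_1,D_2)$, and its orthogonal complement $\mathcal H_1\oplus\{0\}$ is joint invariant for $(V_1^*,V_2^*)$, the compression of $(V_1,V_2)$ to it being $(T_1,T_2)$. Since $\Gamma$ is polynomially convex, the property of being a $\Gamma$-contraction descends to restrictions to joint invariant subspaces and to compressions to joint co-invariant subspaces, so $(T_1,T_2)$ and $(D_1,D_2)$ are $\Gamma$-contractions. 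If moreover a $\Gamma$-distinguished polynomial $p$ annihilates $(V_1,V_2)$, then reading off the diagonal corners of the zero matrix gives $p(T_1,T_2)=0=p(D_1,D_2)$, so both pairs are $\Gamma$-distinguished. Conversely, given $\Gamma$-distinguished polynomials $f,g$ annihilating $(T_1,T_2)$ and $(D_1,D_2)$ respectively, the first assertion yields $(fg)(V_1,V_2)=0$; and $fg$ is $\Gamma$-distinguished because $Z(fg)=Z(f)\cup Z(g)$ meets $\mathbb G_2$ (since $Z(f)$ does) while $Z(fg)\cap\partial\Gamma=(Z(f)\cap\partial\Gamma)\cup(Z(g)\cap\partial\Gamma)\subseteq b\Gamma$, the reverse inclusion $Z(fg)\cap b\Gamma\subseteq Z(fg)\cap\partial\Gamma$ being automatic.

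The only step with genuine content is the descent of the spectral-set property to the diagonal blocks. The norm inequality $\|p(T_1,T_2)\|\le\|p(V_1,V_2)\|\le\|p\|_{\infty,\Gamma}$ for polynomials $p$ is immediate from the block form, but to conclude that $\Gamma$ is a spectral set for $(T_1,T_2)$ one must also place the Taylor joint spectrum inside $\Gamma$---which follows from polynomial convexity, $\widehat\Gamma=\Gamma$, via the joint spectral mapping theorem---and extend von Neumann's inequality from polynomials to all of $\mathrm{Rat}(\Gamma)$, which follows by Oka--Weil approximation together with continuity of the holomorphic functional calculus. This is by now routine in the operator theory of the symmetrized bidisc and can be quoted; everything else is bookkeeping with the cocycle identity.
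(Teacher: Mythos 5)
Your argument is correct and complete. Note that the paper does not actually prove this lemma -- it is imported verbatim from \cite{Pal_Tomar} (Lemma 7.13) -- so there is no in-text proof to compare against; your write-up serves as a valid self-contained substitute. The two points that carry real content are both handled properly: (i) the $(2,1)$ entry of $f(V_1,V_2)g(V_1,V_2)$ computed directly is $X_f\,g(T_1,T_2)+f(D_1,D_2)X_g$, which is \emph{not} visibly zero under the hypotheses, and your use of $f(V)g(V)=(gf)(V)=g(V)f(V)$ to replace it with $X_g\,f(T_1,T_2)+g(D_1,D_2)X_f=0$ is exactly the right fix; (ii) you correctly flag that the backward implication of the ``moreover'' is false as literally stated without the standing assumption that $(V_1,V_2)$ is itself a $\Gamma$-contraction (the off-diagonal blocks $C_1,C_2$ are unconstrained), an assumption that is indeed satisfied everywhere the lemma is invoked in this paper, where $(V_1,V_2)$ is always a $\Gamma$-contraction by construction. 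The remaining ingredients -- descent of the spectral-set property to joint invariant and co-invariant subspaces via polynomial convexity of $\Gamma$ and Oka--Weil, and closure of the class of $\Gamma$-distinguished polynomials under products via $Z(fg)=Z(f)\cup Z(g)$ -- are standard and correctly deployed.
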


	\begin{rem} \label{7.2}
		With the notations as in (\ref{D_E}), we have by Lemma \ref{lem1014} that $(T_A,V_0)$ is $\Gamma$-distinguished if and only if $(S,P)$ and $(D,E)$ are $\Gamma$-distinguished.	
	\end{rem} 
	
		If $(S, P)$ is a $\Gamma$-distinguished $\Gamma$-contraction, then the above remark implies that its $\Gamma$-isometric dilation $(T_A, V_0)$ is $\Gamma$-distinguished if and only if the pure $\Gamma$-isometry $(D, E)$ is $\Gamma$-distinguished. Thus, the question if $(T_A, V_0)$ is $\Gamma$-distinguished boils down to whether or not the pure $\Gamma$-isometry $(D,E)$ is annihilated by a $\Gamma$-distinguished polynomial provided that $(S, P)$ is $\Gamma$-distinguished. We already have some results in this direction in Section \ref{sec05}. For example, Theorem  \ref{r(A)<1_(S,P)_distt.} provides a necessary and sufficient condition so that a pure $\Gamma$-isometry becomes $\Gamma$-distinguished under some additional hypothesis. Using the theory developed in Section \ref{sec05}, we have the following theorem. 
	
	\begin{thm}\label{thm:Char-f}
		Let $(S,P)$ be a $\Gamma$-distinguished $\Gamma$-contraction and $\dim\mathcal{D}_P < \infty$. Then the following are equivalent:
		\begin{enumerate}
			\item The minimal dilation $(T_A, V_0)$ is $\Gamma$-distinguished;
			\item $(A,0)$ is $\Gamma$-distinguished;
			\item $r(A)<1$.
		\end{enumerate}
	\end{thm}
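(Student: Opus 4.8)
The plan is to reduce the statement to the pure $\Gamma$-isometry $(D,E)$ appearing in the block decomposition \eqref{T_A, V_0}--\eqref{D_E} of $(T_A,V_0)$ and then to quote Theorem \ref{r(A)<1_(S,P)_distt.}. Since $(S,P)$ is $\Gamma$-distinguished by hypothesis, Remark \ref{7.2} shows that $(T_A,V_0)$ is $\Gamma$-distinguished if and only if $(D,E)$ is $\Gamma$-distinguished; thus condition $(1)$ is equivalent to the assertion that $(D,E)$ is $\Gamma$-distinguished.

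Next, recall (as noted right after \eqref{D_E}) that $(D,E)$ on $\ell^2(\mathcal D_P)$ is unitarily equivalent to the pure $\Gamma$-isometry $(T_{A+A^*z},T_z)$ on $H^2(\mathcal D_P)$. Comparing the symbol $\phi(z)=A+A^*z$ with the model $\phi(z)=F_*^*+F_*z$ of Theorem \ref{thm:modelpure}, I read off that the fundamental operator of $(D^*,E^*)$ is $A^*$. Moreover $E$ is unitarily equivalent to the unilateral shift $T_z$ on $H^2(\mathcal D_P)$, so $I-EE^*$ is the projection onto the constant functions and $\mathcal D_{E^*}\cong\mathcal D_P$; in particular $\dim\mathcal D_{E^*}<\infty$, which is precisely the hypothesis needed to apply Theorem \ref{r(A)<1_(S,P)_distt.} to $(D,E)$. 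That theorem then gives: $(D,E)$ is $\Gamma$-distinguished if and only if $(A^*,0)$ is $\Gamma$-distinguished if and only if $r(A^*)<1$. As $r(A^*)=r(A)$, combining this with the first paragraph establishes the equivalence of $(1)$ and $(3)$.

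It remains to show that $(2)$ and $(3)$ are equivalent. Assume first that $r(A)<1$; then $\sigma(A)\cap\T=\emptyset$, and since $A$ is the fundamental operator of $(S,P)$ we have $\omega(A)\le1$ by Theorem \ref{thm_201}. Applying Theorem 2.5 in \cite{Das} to the matrix $A^*$ (so that $\det((A^*)^*+z_2A^*-z_1I)=\det(A+z_2A^*-z_1I)$), we conclude that $q(z_1,z_2)=\det(A+z_2A^*-z_1I)$ is a $\Gamma$-distinguished polynomial. Viewing $q$ as a polynomial in $z_1$ with matrix coefficients, $q(z_1,0)=\det(A-z_1I)=(-1)^n\chi_A(z_1)$ with $n=\dim\mathcal D_P$, whence $q(A,0)=(-1)^n\chi_A(A)=0$ by the Cayley--Hamilton theorem; therefore $(A,0)$ is $\Gamma$-distinguished. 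Conversely, if $f(A,0)=0$ for some $\Gamma$-distinguished polynomial $f$, I would argue exactly as in the implication $(1)\Rightarrow(3)$ of Theorem \ref{r(A)<1_(S,P)_distt.}: if $r(A)=\omega(A)=1$, then there is $\lambda\in\sigma(A)\cap\T$, and the spectral mapping theorem forces $f(\lambda,0)=0$; but $(\lambda,0)=\pi(\lambda,0)$ then lies in $Z(f)\cap\partial\Gamma$ while $(\lambda,0)\notin b\Gamma$, contradicting that $f$ is $\Gamma$-distinguished. Hence $r(A)<1$, completing the cycle $(1)\Leftrightarrow(3)\Leftrightarrow(2)$.

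I do not anticipate a genuine obstacle: the argument is an assembly of Remark \ref{7.2}, the pure $\Gamma$-isometry model of Theorem \ref{thm:modelpure}, Theorem \ref{r(A)<1_(S,P)_distt.}, Theorem 2.5 in \cite{Das}, the Cayley--Hamilton theorem, and the spectral-mapping computation already used in Section \ref{sec05}. The only point demanding care is the bookkeeping in the second step --- that the fundamental operator of $(D^*,E^*)$ is $A^*$ and not $A$, which is harmless because $r(A^*)=r(A)$ and, by conjugating the coefficients of an annihilating polynomial, $(A^*,0)$ is $\Gamma$-distinguished exactly when $(A,0)$ is --- together with remembering to verify the finite-dimensionality $\dim\mathcal D_{E^*}<\infty$ that licenses the application of Theorem \ref{r(A)<1_(S,P)_distt.}.
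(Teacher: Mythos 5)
Your proof is correct and follows essentially the same route as the paper: reduce to the pure $\Gamma$-isometry $(D,E)$ via Remark \ref{7.2}, identify its model as $(T_{A+A^*z},T_z)$ with $A^*$ the fundamental operator of the adjoint pair, check $\dim\mathcal D_{T_z^*}=\dim\mathcal D_P<\infty$, and invoke Theorem \ref{r(A)<1_(S,P)_distt.}. The only divergence is the final passage from $(A^*,0)$ back to $(A,0)$: the paper quotes Lemma 7.15 of \cite{Pal_Tomar} (a $\Gamma$-contraction is $\Gamma$-distinguished iff its adjoint is), whereas you use $r(A^*)=r(A)$ together with a direct proof of $(2)\Leftrightarrow(3)$ via Theorem 2.5 of \cite{Das} and Cayley--Hamilton --- a harmless and equally valid substitution.
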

	\begin{proof}
		
		By Remark \ref{7.2}, it suffices to prove that the pure $\Gamma$-isometry $(D,E)$ is $\Gamma$-distinguished. We also have that $(D,E)$ on $\ell^2(\mathcal{D}_P)$ is unitarily equivalent to the pure $\Gamma$-isometry $(T_{A+A^*z},T_z)$ on $H^2(\mathcal{D}_P)$. As $A^*$ is the fundamental operator of $(T_{A+A^*z}^*,T_z^*)$ and $\dim\mathcal{D}_{T_z^*}=\dim\mathcal{D}_P$ which is finite, Theorem \ref{r(A)<1_(S,P)_distt.} yields that the following statements are equivalent:
		\begin{itemize}
			\item[(a)]  $(T_{A+A^*z},T_z)$ is $\Gamma$-distinguished;
			\item[(b)]$(A^*,0)$ is $\Gamma$-distinguished;
			\item[(c)] $r(A^*)<1$.
		\end{itemize}
		The desired conclusion follows from Lemma 7.15 in \cite{Pal_Tomar}, which states that a $\Gamma$-contraction $(S_1, P_1)$ is $\Gamma$-distinguished if and only if $(S_1^*, P_1^*)$ is $\Gamma$-distinguished.		
	\end{proof}

	Let $(S, P)$ be a $\Gamma$-distinguished $\Gamma$-contraction on a Hilbert space $\mathcal{H}$. We show by an example that even if  $\dim\mathcal{D}_P$ is not finite, its minimal dilation $(T_A, V_0)$ (as specified in Theorem \ref{Tirtha-Pal}) can still be $\Gamma$-distinguished. 	
		\begin{eg}\label{7.4}
		Consider the space $\mathcal{H}=\ell^2(\mathbb{N})=
		\{(x_0, x_1, x_2, \dots): \ x_j \in \mathbb{C}, \ \  \sum_{j=0}^\infty|x_j|^2 < \infty \}$
		and take any $r \in (0,1)$. Consider the pair of commuting scalar contractions $(rI, rI)$ on $\mathcal{H}$, where $I$ is the identity operator on $\mathcal{H}$. We take the symmetrization of this pair and define $(S,P):=\pi(rI,rI)=(2rI, r^2I)$ on $\mathcal{H}$. Evidently, this is a $\Gamma$-contraction. Moreover, $(S, P)$ is annihilated by the polynomial $4z_2-z_1^2$ which is a $\Gamma$-distinguished polynomial as seen in Example \ref{3.4}. Hence, $(S,P)$ is a $\Gamma$-distinguished $\Gamma$-contraction. Let $A$ be the fundamental operator of this $\Gamma$-contraction. Since $D_P=\sqrt{1-r^4}I$, we have $\mathcal{D}_P=\mathcal{H}$ and so, $\mathcal{D}_P$ is not finite-dimensional. Also, $S-S^*P=2r(1-r^2)$ and $ D_PAD_P=(1-r^4)A$. From the uniqueness of the fundamental operator, it follows that $\displaystyle A=\frac{2r}{1+r^2}I$ and $\displaystyle \omega(A)=\frac{2r}{1+r^2} <1$ as $0<r<1$. Now we show that $(T_A,V_0)$ is $\Gamma$-distinguished. By Lemma \ref{7.2}, it suffices to show that the pure $\Gamma$-isometry $(D,E)$ is $\Gamma$-distinguished. Putting $\displaystyle a=\frac{2r}{1+r^2}$, we have
			\[
			D=  \begin{bmatrix} 
				A & 0 & 0 &  \dots \\
				A^* & A & 0  & \dots \\
				0 & A^* & A &  \dots\\
				\vdots & \vdots & \ddots & \ddots \\
			\end{bmatrix} =
			\begin{bmatrix} 
				aI & 0 & 0 &  \dots \\
				aI & aI & 0  & \dots \\
				0 & aI & aI &  \dots\\
				\vdots & \vdots & \ddots & \ddots \\
			\end{bmatrix}
			\quad 	\text{and} \quad 
			E=  \begin{bmatrix} 
				0 & 0 & 0 &  \dots \\
				I & 0 & 0 &  \dots \\
				0 & I & 0 &  \dots\\
				\vdots & \vdots & \ddots & \ddots \\
			\end{bmatrix}.
			\]
The polynomial $f(z_1, z_2)=z_1-az_2-a$ annihilates $(D,E)$. We show that $f$ is $\Gamma$-distinguished. Since $a<1,$  we have $\pi(a,0)=(a,0) \in Z(f) \cap \mathbb{G}_2$. For any $(\lambda_1+\lambda_2, \; \lambda_1\lambda_2) \in Z(f) \cap \partial \mathbb{G}_2$, we must have that either $\lambda_1$ or $\lambda_2$ is in $\mathbb{T}$. Without loss of generality, let $\lambda_1 \in \mathbb{T}$. Then 
			\[
			f(\lambda_1+\lambda_2,\; \lambda_1\lambda_2)=\lambda_1+\lambda_2-a\lambda_1\lambda_2-a=0 \quad \text{and so,} \quad \lambda_2=-\frac{\lambda_1-a}{1-a\lambda_1}.
			\]
			Since $z \mapsto -\dfrac{z-a}{1-az}$ with $a<1$ defines an automorphism of the unit disc which sends points of $\mathbb{T}$ onto $\mathbb{T}$, we have that $\lambda_2 \in \mathbb{T}$. Hence, $(\lambda_1, \lambda_2) \in \mathbb{T}^2$ which shows that $(\lambda_1+\lambda_2, \lambda_1\lambda_2) \in b\Gamma$. Thus, $Z(f) \cap \partial \mathbb{G}_2 \subseteq b \Gamma$. So, $(T_A, V_0)$ is a $\Gamma$-distinguished $\Gamma$-isometry and $\dim\mathcal{D}_P$ is not finite.
\qed 
	\end{eg}

	\begin{rem}
		Every $\Gamma$-distinguished polynomial that annihilates $(S,P)$ need not annihilate $(T_A,V_0)$ even if $(T_A,V_0)$ itself is $\Gamma$-distinguished.
		In the example above, the $\Gamma$-distinguished polynomial $4z_2-z_1^2$ annihilates $(S,P)$ but it does not annihilate $(T_A,V_0)$. Let if possible, $(D,E)$ be also annihilated by the same polynomial $4z_2-z_1^2$. Then $D^2=4E$ and a routine calculation shows that 
		\[
		D^2=  \begin{bmatrix} 
			a^2I & 0 & 0 & \dots \\
			* & a^2I & 0 & \dots \\
			* & * & a^2I & \dots\\
			\vdots & \vdots & \ddots & \ddots  \\
		\end{bmatrix}, \quad 
		4E=  \begin{bmatrix} 
			0 & 0 & 0 &  \dots \\
			4I & 0 & 0 &  \dots \\
			0 & 4I & 0 &  \dots\\
			\vdots & \vdots & \ddots & \ddots \\
		\end{bmatrix}.
		\]
	The equation $D^2=4E$ leads to $a^2=0$, which contradicts the fact that $0<a<1$.
	\end{rem} 
	
	We now focus on the following question: Is the $\Gamma$-isometry dilation $(T_A, V_0)$ of a $\Gamma$-distinguished $\Gamma$-contraction $(S, P)$ itself $\Gamma$-distinguished? Addressing this question requires a crucial observation which is essential for constructing a potential counter-example later. Before proceeding further, we recall that a commuting operator tuple $(T_1, \dotsc, T_n)$ is said to be \textit{algebraic} if it is annihilated by a polynomial $q \in \C[z_1, \dotsc, z_n]$.

	\begin{prop}\label{7.6}
		
		If $(T_A, V_0)$ is $\Gamma$-distinguished, then the following hold:
		\begin{enumerate}
			\item $(A,0)$ and $(S,P)$ are annihilated by the same polynomial that annihilates $(T_A, V_0);$
			\item the fundamental operator $A$ is algebraic;
			\item the spectrum of $A$ is finite that lies strictly inside $\mathbb{D}$.
		\end{enumerate}
	\end{prop}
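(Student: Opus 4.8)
The plan is to exploit the block-triangular structure of $(T_A, V_0)$ as displayed in \eqref{T_A, V_0} together with Remark \ref{7.2}. Suppose $q \in \C[z_1, z_2]$ is a $\Gamma$-distinguished polynomial with $q(T_A, V_0) = 0$. The first observation is that since $(S, P)$ sits as the compression of $(T_A, V_0)$ to the invariant subspace $\mathcal{H}$, we get $q(S,P) = P_{\mathcal{H}} \, q(T_A, V_0)|_{\mathcal{H}} = 0$, so the same polynomial $q$ annihilates $(S,P)$. For the pair $(A, 0)$: the pair $(D, E)$ on $\ell^2(\mathcal{D}_P)$ is the \emph{other} diagonal block of $(T_A, V_0)$ in the sense of Lemma \ref{lem1014}, and because $(T_A, V_0)$ is $\Gamma$-distinguished, that lemma (via Remark \ref{7.2}) forces $(D, E)$ to be $\Gamma$-distinguished as well. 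But here one must be slightly careful: Lemma \ref{lem1014} tells us that $(S,P)$ and $(D,E)$ are each $\Gamma$-distinguished, but a priori by possibly \emph{different} polynomials. To get that the \emph{same} $q$ annihilates $(D, E)$, I would argue directly from $q(T_A, V_0) = 0$ using the block-triangular form: writing $T_A = \begin{bmatrix} S & 0 \\ C & D \end{bmatrix}$ and $V_0 = \begin{bmatrix} P & 0 \\ B & E \end{bmatrix}$, every word in $T_A, V_0$ is lower block-triangular with $(2,2)$ entry the corresponding word in $D, E$; hence the $(2,2)$ block of $q(T_A, V_0)$ is exactly $q(D, E)$, and $q(T_A, V_0) = 0$ gives $q(D, E) = 0$. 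Finally, since $(D, E) \cong (T_{A + A^*z}, T_z)$ on $H^2(\mathcal{D}_P)$ and $A$ appears as the lower-left corner of the lower-triangular Toeplitz matrix $D$ (restricting to the first coordinate of the $\ell^2(\mathcal{D}_P)$ decomposition, equivalently compressing to the constants-plus-first-level subspace), a direct computation of $q(D, E)$ block-by-block shows that its $(2,1)$ entry involves $q$ evaluated on $A$ and its structure forces $q(A, 0) = 0$; alternatively, one notes $(A, 0)$ is a compression of $(D, E)$ to a semi-invariant subspace and applies the polynomial functional calculus. This proves (1).

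Part (2) is then immediate: $q$ is a nonzero polynomial with $q(A, 0) = 0$, so setting $\tilde q(z) := q(z, 0) \in \C[z]$, if $\tilde q \not\equiv 0$ then $A$ is algebraic as a single operator; and $\tilde q \not\equiv 0$ because if $q(z_1, z_2)$ vanished identically when $z_2 = 0$ then $z_2 \mid q$, but then $Z(q) \supseteq \{z_2 = 0\}$ would meet $\partial \Gamma$ outside $b\Gamma$ (e.g.\ at $(s, 0)$ for $s \in (0,2)$, which lies in $\partial \G$ but not in $b\Gamma$), contradicting that $q$ is $\Gamma$-distinguished. Hence $A$ is annihilated by the nonzero one-variable polynomial $\tilde q$, so $A$ is algebraic, and in particular $(A,0)$ is an algebraic commuting pair.

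For part (3): since $A$ is algebraic with $\tilde q(A) = 0$ and $\tilde q \not\equiv 0$, the spectrum $\sigma(A)$ is contained in the finite zero set $Z(\tilde q) \subseteq \C$, hence is finite. It remains to show $\sigma(A) \subseteq \D$. Since $\omega(A) \le 1$ (as $A$ is the fundamental operator of a $\Gamma$-contraction), we have $\sigma(A) \subseteq \overline{\D}$, so it suffices to rule out $\sigma(A) \cap \T \ne \emptyset$. Suppose $\lambda \in \sigma(A) \cap \T$. Then $\tilde q(\lambda) = 0$, i.e.\ $q(\lambda, 0) = 0$, so $(\lambda, 0) \in Z(q)$; on the other hand $(\lambda, 0) = \pi(\lambda, 0) \in \partial \G$ (as the symmetrization of a point of $\T \times \D$), while $(\lambda, 0) \notin b\Gamma$ since $b\Gamma = \pi(\T^2)$ and $\lambda \cdot 0 = 0 \ne$ a product of two unimodular numbers. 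This contradicts $Z(q) \cap \partial \G \subseteq b\Gamma$. Therefore $\sigma(A) \cap \T = \emptyset$ and $\sigma(A)$ is a finite subset of $\D$.

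The main obstacle is the first step: ensuring that it is literally the \emph{same} polynomial $q$ — rather than merely \emph{some} $\Gamma$-distinguished polynomial — that annihilates $(A,0)$ and $(S,P)$. The compression argument for $(S,P)$ is routine, but extracting $q(A,0) = 0$ from $q(T_A, V_0) = 0$ requires carefully tracking the $2\times 2$ block structure through an arbitrary polynomial in the two noncommuting-looking-but-commuting operators $T_A, V_0$ and identifying where $A$ enters in the block $(D,E) \cong (T_{A+A^*z}, T_z)$; the cleanest route is to realize $(A, 0)$ as a compression of $(D, E)$ to the rank-one-level semi-invariant subspace and invoke the standard fact that polynomial calculus respects compressions to semi-invariant subspaces, which reduces everything to the already-established identity $q(D, E) = 0$.
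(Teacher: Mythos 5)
Your proposal is correct and takes essentially the same route as the paper: the paper also extracts $q(S,P)=0$, $q(D,E)=0$ and then $q(A,0)=0$ from the nested lower block-triangular structure (via Lemma \ref{lem1014}, which is exactly your ``diagonal blocks of a polynomial in a block-triangular commuting pair'' / semi-invariant compression observation), and then combines the spectral mapping theorem with the $\Gamma$-distinguished condition at points $(\lambda,0)$, $\lambda\in\T$, to get finiteness of $\sigma(A)$ and $\sigma(A)\subseteq\D$. Two harmless slips worth fixing: $A$ is the top-left (not lower-left) corner of $D$, and among the points $(s,0)$ with $s\in(0,2)$ only $(1,0)$ actually lies in $\partial\Gamma\setminus b\Gamma$ (for $s<1$ the point is in $\G$ and for $s>1$ it is outside $\Gamma$) --- but that single point is all you need to rule out $z_2\mid q$, exactly as in the paper.
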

	
	\begin{proof}
		Assume that $f(T_A,V_0)=0$ for some $\Gamma$-distinguished polynomial $f(z_1, z_2)$. It follows from Lemma \ref{lem1014} that $f(S,P)=0$ and $f(D,E)=0$. Using the block matrix form (\ref{D_E}), we rewrite the pair $(D,E)$ in the $2 \times 2$ block matrix form as
		\[
		D=  \begin{bmatrix} 
			A & 0  \\
			G & H \\
		\end{bmatrix} \quad \text{and} \quad
		E=  \begin{bmatrix} 
			0 & 0 \\
			K & L \\
		\end{bmatrix}
		\]
		with respect to $\mathcal{D}_P \oplus \ell^2(\mathcal{D}_P)$. Again by Lemma \ref{lem1014}, $f(A,0)=0$.  Since $f$ is $\Gamma$-distinguished, $f(z_1, z_2)$ cannot be of the form $z_2g(z_1, z_2)$ for any $g \in \mathbb{C}[z_1, z_2]$. Otherwise, $f(1,0)=0$ which implies that $(1,0)=\pi(1,0) \in Z(f) \cap \partial\mathbb{G}_2 \subseteq b\Gamma$. So, $(1,0) \in b\Gamma$ which is a contradiction. Moreover, $f(A, 0)=0$ implies that $f(z_1, z_2)$ cannot be of the form $a_0+ z_2g(z_1, z_2)$ for any $a_0 \in \C$ and $g \in \mathbb{C}[z_1, z_2]$. Thus, $f(z_1, z_2)$ has the following form: 
		\[
		f(z_1, z_2)=a_0+a_1z_1+\dots + a_nz_1^n+z_2g(z_1, z_2) 
		\]
		for some $g \in \mathbb{C}[z_1, z_2], \ n \in \mathbb{N}$ and $a_n \ne 0$. Hence,
		$
		p(z)=a_0+a_1z+\dots + a_nz^n
		$
		is a non-constant polynomial and $p(A)=f(A,0)=0$. By the spectral mapping theorem,
		$
		p(\sigma(A))=\sigma(p(A))=\{0\}.
		$
		Thus, $\sigma(A) \subset Z(p)$, which is a finite set. Since $A$ is the fundamental operator of $(S, P)$, we have that $r(A) \leq \omega(A) \leq 1$. If $r(A)=1$, then there exists $\lambda \in \mathbb{T} \cap \sigma(A)$. Again, it follows from spectral mapping theorem that 
		\begin{equation*}
			\begin{split}
				\{0\} =\sigma(f(A,0))
				=f(\sigma_T(A,0)) =f(\sigma(A)\times \{0\}).
			\end{split}
		\end{equation*}
		Hence, $f(\lambda,0)=0$. Since $(\lambda, 0) \in \partial \mathbb{D}^2$, we have $\pi(\lambda, 0) \in \partial \mathbb{G}_2$. Now, $f$ being $\Gamma$-distinguished implies that 
		$
		\pi(\lambda,0)=(\lambda,0) \in Z(f) \cap \partial \mathbb{G}_2 \subseteq b\Gamma,
		$
		which is a contradiction since $|p|=1$ for every $(s, p) \in b\Gamma$. Consequently, $r(A)<1$ and so, $\sigma(A)$ is a finite subset of $\mathbb{D}$.
	\end{proof}
	
		\begin{rem}\label{rem6.7}
		
		Following the proof of the above proposition, one can easily see that if $(T,0)$ is a $\Gamma$-distinguished $\Gamma$-contraction, then $\sigma(T)$ is a finite subset of $\mathbb{D}$. In particular, for the fundamental operator $A$, if the $\Gamma$-contraction $(A,0)$ is $\Gamma$-distinguished, then $\sigma(A)$ is a finite subset of $\D$.
		
	\end{rem}
	
Being armed with the above proposition, we are now in a position to show that the minimal $\Gamma$-isometric dilation $(T_A, V_0)$ of a $\Gamma$-distinguished $\Gamma$-contraction need not be $\Gamma$-distinguished.
	
	\begin{eg}\label{Shift_counter_example}
		Consider the right-shift operator $W$ on $\mathcal{H}=\ell^2(\mathbb{N})$ given by
		\[
		W(x_0,x_1, x_2, x_3, \dots)=(0, x_0, x_1, x_2, \dots)
		\]
		and take any $r \in (0,1)$. Consider the pair of commuting contractions $(rW, rW)$ on $\mathcal{H}$ and we take the symmetrization of this pair and define 
			$
			(S,P):=\pi(rW,rW)=(2rW, r^2W^2)$. Evidently, this is a $\Gamma$-contraction annihilated by the polynomial $f(z_1, z_2)=4z_2-z_1^2$, which is $\Gamma$-distinguished as shown in Example \ref{3.4}. Hence, $(S,P)$ is a $\Gamma$-distinguished pure $\Gamma$-contraction. Since $D_P^2=I-r^4W^*W^*WW=(1-r^4)I$, we have $D_P=\sqrt{1-r^4}I$ and $\mathcal{D}_P=\mathcal{H}$. We find the fundamental operator $A$ of $(S,P)$. Note that
			\[
			S-S^*P=2rW-2r^3W^*W^2=2r(1-r^2)W \ \ \text{and} \ \  D_P A D_P=(1-r^4)A.
			\]
			The uniqueness of fundamental operator gives that $\displaystyle A=\frac{2r}{1+r^2}W$ and $\displaystyle \omega(A)=\frac{2r}{1+r^2} <1$ as $0<r<1$. Suppose, if possible $(T_A, V_0)$ is $\Gamma$-distinguished. Proposition \ref{7.6} implies that $A=\omega(A)W$ is an algebraic operator with a finite spectrum. As the numerical radius $\omega(A)$ is a non-zero scalar, the spectral mapping theorem yields that $W$ also possesses a finite spectrum. This contradicts the fact that $\sigma(W)=\overline{\mathbb{D}}$. Hence, $(T_A, V_0)$ cannot be annihilated by a $\Gamma$-distinguished polynomial even though it is a $\Gamma$-isometric dilation of a $\Gamma$-distinguished $\Gamma$-contraction.
			
			\smallskip 
			
			Also, we mention here that $(S, P)$ admits a $\Gamma$-distinguished $\Gamma$-isometric dilation. To see this, let $[f_{ij}]_{1 \leq i, j \leq n}$ be a matricial polynomial. Since $P=S^2\slash 4$ and $S\slash 2$ is a contraction, we have	
			\begin{equation*}
				\begin{split}
					   \|[f_{ij}(S, S^2\slash 4)]_{i, j}\|
					 \leq   \max \{ \|[f_{ij}(2z, z^2)]_{i, j}\| \ :  z \in \DC \}
					 \leq   \max \{ \|[f_{ij}(z_1, z_2)]_{i, j}\| \ :  (z_1, z_2) \in Z(f) \cap \Gamma \}.
				\end{split}
			\end{equation*}
			Consequently, $Z(f) \cap \Gamma$ is a complete spectral set for $(S, P)$ and by Theorem \ref{thm814}, $(S, P)$ dilates to a $\Gamma$-distinguished $\Gamma$-isometry.  
		\qed 
	\end{eg} 
	
		We have seen in Proposition \ref{7.6} that a necessary condition for $(T_A, V_0)$ to be annihilated by a $\Gamma$-distinguished polynomial is the existence of a $\Gamma$-distinguished polynomial that annihilates both $(S,P)$ and $(A,0)$. We shall prove a partial converse to that. Indeed, we show the existence of a sequence of $\Gamma$-distinguished $\Gamma$-contractions that converges to $(T_A, V_0)$ in the strong operator topology when both $(S, P)$ and $(A, 0)$ are $\Gamma$-distinguished. For this purpose, we consider the sequence of commuting pair of operators $(T_n, V_n)$ on $\mathcal{H}\oplus \ell^2(\mathcal{D}_P)$ defined by 
		\begin{equation*}
			\begin{split}
				& T_n(x_0, x_1, x_2,  \dots)=(Sx_0, \; A^*D_Px_0+Ax_1, \; A^*x_1+Ax_2, \; \dots, \; A^*x_{n-1}+Ax_n, \; 0, \; 0, \dots);\\
				& V_n(x_0, x_1, x_2, \dots)=(Px_0, \; D_Px_0, \; x_1, \; \dots, \; x_{n-1},\; 0, \; 0, \dots).\\			
			\end{split}
		\end{equation*}
	The sequence $(T_n, V_n)$ is our candidate of $\Gamma$-contractions with desired properties.	With respect to the decomposition $\mathcal{H} \oplus \ell^2(\mathcal{D}_P)$, the $ 2 \times 2$ block matrix form of $(T_n, V_n)$ is given by
		\begin{equation}\label{T_n, V_n}
			T_n= \begin{bmatrix}
				S & 0\\
				C & D_n\\
			\end{bmatrix} \quad \mbox{and} \quad V_n=\begin{bmatrix}
				P & 0\\
				B & E_n\\
			\end{bmatrix},\\
		\end{equation}
		where the operators $C$ and $B$ are the same as defined in (\ref{T_A, V_0}). The operator pair $(D_n, E_n)$ on $\ell^2(\mathcal{D}_P)$ is defined as 
		\begin{equation}\label{D_n, E_n}
			D_n:=\begin{bmatrix}
				\hat{A}_n & 0\\
				0& 0\\
			\end{bmatrix} \quad \mbox{and} \quad E_n:=\begin{bmatrix}
				\hat{I}_n & 0\\
				0& 0\\
			\end{bmatrix}\\
		\end{equation}
		where the operator pair $(\hat{A}_n, \hat{I}_n)$ is given by 
		\begin{equation}\label{A_n, I_n}
			\hat{A_n}=  \begin{bmatrix} 
				A & 0 & 0 & \dotsc & 0 \\
				A^* & A & 0 & \dotsc & 0 \\
				0 & A^* & A & \dotsc & 0\\
				\dotsc & \dotsc & \dotsc & \dotsc & \dotsc\\
				0 & 0 & \dotsc & A^* & A \\
			\end{bmatrix}_{n\times n}, \quad
			\hat{I_n}=  \begin{bmatrix} 
				0 & 0 & 0 & \dotsc & 0 \\
				I & 0 & 0 & \dotsc & 0 \\
				0 & I & 0 & \dotsc & 0\\
				\dotsc & \dotsc & \dotsc & \dotsc & \dotsc\\
				0 & 0 & \dotsc & I & 0 \\
			\end{bmatrix}_{n\times n}.
		\end{equation}
Following the same notations as above, we show that $(T_n, V_n)$ is a $\Gamma$-contraction for every $n \in \mathbb{N}$. Our next result is a first step in this direction.
		
			\begin{lem}\label{A1.1}
			The pair $(\hat{A}_n, \hat{I}_n)$ on $\underset{n}{\oplus}\mathcal{D}_P$ is a $\Gamma$-contraction for every $n \in \mathbb{N}$.
		\end{lem}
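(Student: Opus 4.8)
The plan is to verify the three conditions in part~(3) of Theorem~\ref{thm_201} for the commuting pair $(\hat{A}_n,\hat{I}_n)$ on $\underset{n}{\oplus}\mathcal{D}_P$; this route is the most efficient because it also identifies the fundamental operator of the pair, which is likely to be useful in the subsequent lemmas.

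First I would record the elementary structural facts. The block operator $\hat{I}_n$ is a truncated shift, so $\|\hat{I}_n\|\leq 1$, and a direct block computation gives $\hat{I}_n^*\hat{I}_n=\mathrm{diag}(I,\dots,I,0)$; hence $D_{\hat{I}_n}=\mathrm{diag}(0,\dots,0,I)$ is the orthogonal projection onto the last copy of $\mathcal{D}_P$, and $\mathcal{D}_{\hat{I}_n}$ is that last copy, canonically identified with $\mathcal{D}_P$. Commutativity $\hat{A}_n\hat{I}_n=\hat{I}_n\hat{A}_n$ follows from the Toeplitz-type band structure of $\hat{A}_n$ (its $(i,j)$ block depends only on $i-j$) together with the vanishing of the relevant corner blocks; equivalently, it is automatic once one notes that $\underset{n}{\oplus}\mathcal{D}_P$, viewed as the span of the first $n$ coordinate subspaces of $\ell^2(\mathcal{D}_P)$, is invariant under $D^*$ and $E^*$ and that compressing the commuting pair $(D,E)$ of \eqref{D_E} to this co-invariant subspace returns exactly $(\hat{A}_n,\hat{I}_n)$. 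The same observation settles the bound $\|\hat{A}_n\|\leq 2$: since $(D,E)\cong(T_{A+A^*z},T_z)$ is a pure $\Gamma$-isometry (Theorem~\ref{thm:modelpure}), hence a $\Gamma$-contraction, we have $\|D\|\leq 2$, and $\hat{A}_n$ is a compression of $D$, so $\|\hat{A}_n\|\leq\|D\|\leq 2$.

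Next I would solve the fundamental equation $\hat{A}_n-\hat{A}_n^*\hat{I}_n=D_{\hat{I}_n}XD_{\hat{I}_n}$. A short block computation shows that $\hat{A}_n^*\hat{I}_n$ agrees with $\hat{A}_n$ except that its last diagonal block $A$ is replaced by $0$, so the left-hand side equals $\mathrm{diag}(0,\dots,0,A)$; on the other hand, since $D_{\hat{I}_n}$ is the projection onto the last copy of $\mathcal{D}_P$, the right-hand side is $\mathrm{diag}(0,\dots,0,X)$ for $X\in\mathcal{B}(\mathcal{D}_{\hat{I}_n})$. Consequently $X=A$ is the unique solution in $\mathcal{B}(\mathcal{D}_{\hat{I}_n})=\mathcal{B}(\mathcal{D}_P)$, and $\omega(X)=\omega(A)\leq 1$ because $A$ is the fundamental operator of the $\Gamma$-contraction $(S,P)$. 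Theorem~\ref{thm_201} then gives that $(\hat{A}_n,\hat{I}_n)$ is a $\Gamma$-contraction (with fundamental operator $A$), which is exactly the assertion of the lemma.

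The only point that needs genuine care is the bottom-right bookkeeping: one must check that $\hat{A}_n-\hat{A}_n^*\hat{I}_n$ has range inside $\mathrm{Ran}\,D_{\hat{I}_n}$ and collapses to precisely the single block $A$, with no surviving subdiagonal contribution, so that the fundamental equation is honestly solvable and its solution has numerical radius at most $1$. (The case $n=1$ is degenerate, with $\hat{I}_1=0$ and $\hat{A}_1=A$, and is handled directly.) Apart from this, everything is routine once the $2\times2$ block picture with the last coordinate split off is set up, so I anticipate no essential obstacle.
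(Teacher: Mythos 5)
Your proposal is correct and follows essentially the same route as the paper: both verify the three conditions of part (3) of Theorem \ref{thm_201}, identify $D_{\hat{I}_n}$ as the projection onto the last copy of $\mathcal{D}_P$, compute $\hat{A}_n-\hat{A}_n^*\hat{I}_n=\mathrm{diag}(0,\dots,0,A)$, and take $X=A$ with $\omega(A)\leq 1$. The only (cosmetic) difference is that you obtain commutativity and $\|\hat{A}_n\|\leq 2$ by viewing $(\hat{A}_n,\hat{I}_n)$ as the compression of $(D,E)$ to a co-invariant subspace, whereas the paper computes the products directly and cites the proof of Theorem 4.3 in \cite{Pal8} for the norm bound; your observation is a slightly cleaner, self-contained justification of these routine facts.
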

		
		\begin{proof} We shall prove that the pair $(\hat{A}_n, \hat{I}_n)$ commutes, $\|\hat{A}_n\| \leq 2, \|\hat{I}_n\| \leq 1$ and there is a unique solution to the fundamental equation 
			\begin{equation}\label{eqn901}
				\hat{A}_n-\hat{A}_n^*\hat{I}_n=D_{\hat{I}_n}X_nD_{\hat{I}_n}
			\end{equation}
			for some $X_n \in B(\mathcal{D}_{\hat{I}_n})$ with $\omega(X_n) \leq 1$. Then Theorem \ref{thm_201} gives that $(\hat{A}_n, \hat{I}_n)$ is a $\Gamma$-contraction. Some routine computations give that 
			\[ 
			\hat{A}_n\hat{I}_n=  \begin{bmatrix} 
				0& 0& 0 & \dots & 0 \\
				A & 0 & 0 & \dots & 0 \\
				A^* & A & 0 & \dots & 0 \\
				\vdots & \vdots & \ddots & \ddots & \vdots\\
				0 & \dots & A^* & A & 0 \\
			\end{bmatrix}_{n\times n}=\hat{I}_n\hat{A}_n.
			\]
			Following the proof of Theorem 4.3 in \cite{Pal8}, we have $\|\hat{A}_n\| \leq 2$. It is easy to see that $\|\hat{I}_n\| =1$.  The LHS $\hat{A}_n- \hat{A}_n^*\hat{I}_n $ in (\ref{eqn901}) equals
			\begin{small} 
				\begin{align}\label{eqn902}
					\begin{split}
						\hat{A}_n- \begin{bmatrix} 
							A^*& A& 0 & \dots & 0 \\
							0 & A^* & A & \dots & 0 \\
							0 & 0 & A^* & \dots & \dots  \\
							\vdots & \vdots & \vdots & \ddots & A\\
							0 & 0 & 0 & \dots & A^* \\
						\end{bmatrix} \begin{bmatrix} 
							0 & 0 & 0 & \dots & 0 \\
							I & 0 & 0 & \dots & 0 \\
							0 & I & 0 & \dots & 0\\
							\vdots & \vdots & \ddots & \vdots & \vdots\\
							0 & 0 & \dots & I & 0 \\
						\end{bmatrix}
						&=\begin{bmatrix} 
							A & 0 & 0 & \dots & 0 \\
							A^* & A & 0 & \dots & 0 \\
							0 & A^* & A & \dots & 0\\
							\vdots & \vdots & \ddots & \ddots & \vdots\\
							0 & 0 & \dots & A^* & A \\
						\end{bmatrix}- \begin{bmatrix} 
							A& 0 & \dots & 0 & 0\\
							A^* & A & \dots & 0 & 0 \\
							0 & A^* & \dots & \dots & 0 \\
							\vdots & \vdots & \ddots & A & 0\\
							0 & 0 & \dots & A^* & 0\\
						\end{bmatrix} \\
						&=\begin{bmatrix}
							O_{n-1} & 0\\
							0& A\\
						\end{bmatrix},\\
					\end{split}
				\end{align}
			\end{small} 
			where $O_{n-1}$ denotes the zero block matrix of order $(n-1) \times (n-1)$. Next, we compute the defect operator and the  defect space corresponding to the operator $\hat{I}_n$. The operator $I-\hat{I}_n^*\hat{I}_n$ equals
			\begin{small} 
				\begin{equation*}
					\begin{split}
						I-   \begin{bmatrix} 
							0& I& 0 & \dots & 0 \\
							0 & 0 & I & \dots & 0 \\
							0 & 0 & 0 & \dots & 0  \\
							\vdots & \vdots & \vdots & \ddots & I\\
							0 & 0 & \dots & 0 & 0 \\
						\end{bmatrix} \begin{bmatrix} 
							0 & 0 & 0 & \dots & 0 \\
							I & 0 & 0 & \dots & 0 \\
							0 & I & 0 & \dots & 0\\
							\vdots & \vdots & \ddots & \vdots & \vdots\\
							0 & 0 & \dots & I & 0 \\
						\end{bmatrix}
						= \begin{bmatrix} 
							I& 0 & \dots & 0 &0\\
							0 & I & \dots & 0 &0 \\
							0 & 0 & \dots & \vdots &0 \\
							\vdots & \vdots & \ddots & I &0\\
							0 & 0 & \dots & 0 & I\\
						\end{bmatrix}- \begin{bmatrix} 
							I& 0 & \dots & 0 &0\\
							0 & I & \dots & 0 &0 \\
							0 & 0 & \dots & 0 &0 \\
							\vdots & \vdots & \ddots & I &0\\
							0 & 0 & \dots & 0 & 0\\
						\end{bmatrix}
						=\begin{bmatrix}
							O_{n-1} & 0\\
							0& I\\
						\end{bmatrix}. 
					\end{split}
				\end{equation*}
			\end{small} 
			Hence, $D_{\hat{I}_n}=(I-\hat{I}_n^*\hat{I}_n)^{1\slash 2}=\begin{bmatrix}
				O_{n-1} & 0\\
				0& I\\
			\end{bmatrix}$ and $\mathcal{D}_{\hat{I}_n}= \underbrace{0 \oplus 0  \oplus \dots  \oplus 0 \oplus  \mathcal{D}_P}_{n-times}$. Then the operator 
			\begin{equation*}
				X_n:\mathcal{D}_{\hat{I}_n} \to \mathcal{D}_{\hat{I}_n} \quad \text{given by} \quad 	X_n(0, 0, \dots, 0, x)=(0, 0, \dots, 0, Ax)
			\end{equation*}
			has numerical radius, $\omega(X_n)=\omega(A) \leq 1$. The matrix form of $X_n$ with respect to the decomposition  $\mathcal D_{\widehat I_n}\underbrace{=0 \oplus 0  \oplus \dots  \oplus 0 \oplus  \mathcal{D}_P}_{n-times}$ is 
			$
			X_n=\begin{bmatrix}
				O_{n-1} & 0\\
				0& A\\
			\end{bmatrix}.
			$
			It follows from (\ref{eqn902}) that
			\begin{equation*}
				\begin{split}
					D_{\hat{I}_n}X_nD_{\hat{I}_n} =\begin{bmatrix}
						O_{n-1} & 0\\
						0& I\\
					\end{bmatrix}\begin{bmatrix}
						O_{n-1} & 0\\
						0& A\\
					\end{bmatrix}\begin{bmatrix}
						O_{n-1} & 0\\
						0& I\\
					\end{bmatrix}
					=\begin{bmatrix}
						O_{n-1} & 0\\
						0& A\\
					\end{bmatrix}
					=\hat{A}_n- \hat{A}_n^*\hat{I}_n.\\
				\end{split}
			\end{equation*}
			Therefore, $(\hat{A}_n, \hat{I}_n)$ is a $\Gamma$-contraction on $\underset{n}{\oplus}\mathcal{D}_P$ for every  $n \in \mathbb{N}$.
		\end{proof}
		
	\begin{lem}\label{A1.2}
	The pair $(T_n, V_n)$ on $\mathcal{H}\oplus \ell^2(\mathcal{D}_P)$ is a $\Gamma$-contraction for every $n \in \mathbb{N}$.
\end{lem}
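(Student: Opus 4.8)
The plan is to verify the three conditions of Theorem \ref{thm_201} for the pair $(T_n, V_n)$, namely that $T_n, V_n$ commute, $\|T_n\| \le 2$, $\|V_n\| \le 1$, and that the fundamental equation $T_n - T_n^* V_n = D_{V_n} Y_n D_{V_n}$ has a unique solution $Y_n$ with $\omega(Y_n) \le 1$. I would exploit the fact that, up to truncation, $(T_n, V_n)$ is built from the genuine $\Gamma$-isometric dilation $(T_A, V_0)$ of Theorem \ref{Tirtha-Pal}; in particular $(T_n, V_n)$ differs from $(T_A, V_0)$ only by killing the tail coordinates beyond the $n$-th slot. Thus commutativity of $(T_n, V_n)$ should follow from the commutativity of $(S,P)$, the defining relation $S - S^* P = D_P A D_P$ of the fundamental operator, and the commutativity of $(\hat A_n, \hat I_n)$ already established in Lemma \ref{A1.1}; one checks it blockwise using the $2\times 2$ form \eqref{T_n, V_n}, where the off-diagonal blocks $C, B$ are the same as in \eqref{T_A, V_0}.

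Next, for the norm bounds: since $V_n$ is (unitarily) the compression of the isometry $V_0$ to a co-invariant subspace plus a truncation, and explicitly $V_n$ maps $(x_0, x_1, \dots)$ to $(Px_0, D_P x_0, x_1, \dots, x_{n-1}, 0, \dots)$, a direct computation gives $\|V_n x\|^2 = \|Px_0\|^2 + \|D_P x_0\|^2 + \sum_{j=1}^{n-1}\|x_j\|^2 \le \|x_0\|^2 + \sum_{j=1}^{n-1}\|x_j\|^2 \le \|x\|^2$, using $\|Px_0\|^2 + \|D_P x_0\|^2 = \|x_0\|^2$. So $\|V_n\| \le 1$. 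For $\|T_n\| \le 2$, I would follow the same argument as in the proof of Theorem 4.3 in \cite{Pal8} and as used for $\|\hat A_n\| \le 2$ in Lemma \ref{A1.1}: write $T_n = \frac12(M_n + N_n)$ for suitable contractions, or bound the block rows directly using $\|S\| \le 2$, $\omega(A) \le 1$ (hence $\|A\| \le 2$), and the finite-band structure.

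The main work is the fundamental equation. Using the $2\times 2$ block form with respect to $\mathcal H \oplus \ell^2(\mathcal D_P)$, one computes $T_n - T_n^* V_n$ blockwise. The $(1,1)$ entry should reduce to $S - S^* P - C^* B$, and since $C, B$ are exactly the dilation blocks, the identity $S - S^* P = D_P A D_P$ together with $C^* B$'s explicit form should make this manageable; the $(2,2)$ entry should reduce to $\hat A_n - \hat A_n^* \hat I_n$, which by \eqref{eqn902} equals the matrix that is $A$ in the bottom-right corner and zero elsewhere. I would then show $D_{V_n}$ is the projection onto $\mathcal D_{\hat I_n}$ (the last copy of $\mathcal D_P$), mirroring the computation of $D_{\hat I_n}$ in Lemma \ref{A1.1}, so that the candidate solution is $Y_n = 0 \oplus \dots \oplus 0 \oplus A$ on $\mathcal D_{V_n}$, giving $\omega(Y_n) = \omega(A) \le 1$. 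Uniqueness follows because the map $X \mapsto D_{V_n} X D_{V_n}$ is injective on $\mathcal B(\mathcal D_{V_n})$ (as $D_{V_n}$ restricted to $\mathcal D_{V_n}$ is invertible, indeed the identity there). The anticipated obstacle is bookkeeping: correctly identifying which coordinates survive the truncation so that the cross terms from the blocks $C$ and $B$ cancel exactly against the $S - S^* P$ term, and confirming that the truncation does not introduce spurious contributions to $T_n - T_n^* V_n$ outside the bottom-right block. Once the block computation is lined up with \eqref{eqn902} and the defect space identification from Lemma \ref{A1.1}, the conclusion follows from Theorem \ref{thm_201}.
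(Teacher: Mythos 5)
Your proposal follows essentially the same route as the paper: verify the four conditions of Theorem \ref{thm_201} (commutativity via the fundamental equation $S-S^*P=D_PAD_P$, the norm bounds by direct computation, and the fundamental equation for $(T_n,V_n)$ by a blockwise computation showing $T_n-T_n^*V_n$ is supported on the $(n,n)$ block where it equals $A$), and the paper likewise bounds $\|T_nx\|\le\|T_Ax\|\le 2\|x\|$. One bookkeeping correction: $\mathcal D_{V_n}$ is \emph{not} a single copy of $\mathcal D_P$ mirroring $\mathcal D_{\hat I_n}$; since $V_n$ discards all coordinates from position $n$ onward, $I-V_n^*V_n$ is the projection onto the infinite tail, so $\mathcal D_{V_n}=0\oplus\dots\oplus 0\oplus\mathcal D_P\oplus\mathcal D_P\oplus\dotsb$ and the correct candidate is $Y_n(0,\dots,0,x_n,x_{n+1},\dots)=(0,\dots,0,Ax_n,0,\dots)$, i.e.\ $A$ on slot $n$ and $0$ on the rest of the tail; this still gives $\omega(Y_n)=\omega(A)\le 1$ and the verification goes through exactly as you planned, so the slip does not affect the argument's validity.
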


\begin{proof}
	We shall again use Theorem \ref{thm_201} to show that $(T_n, V_n)$ is indeed a $\Gamma$-contraction. We prove that $(T_n, V_n)$ is a commuting pair with $\|T_n\| \leq 2, \|V_n\| \leq 1$ and the fundamental equation  
	\begin{equation}\label{eqn903}
		T_n-T_n^*V_n=D_{V_n}Y_nD_{V_n}
	\end{equation}
	has a solution $Y_n$ with $\omega(Y_n) \leq 1$. Note that
	\begin{equation*}
		\begin{split}
			& T_nV_n(x_0, x_1, \dots) \\	&=T_n(Px_0, D_Px_0, x_1, \dots, x_{n-1}, 0,0, \dots) \\
			& = (SPx_0, \;A^*D_PPx_0+AD_Px_0, \;A^*D_Px_0+Ax_1, \;A^*x_1+Ax_2, \dots, \; A^*x_{n-2}+Ax_{n-1}, \; 0, \;0 \dots) \\
		\end{split}
	\end{equation*}
	and
	\begin{equation*}
		\begin{split}
			V_nT_n(x_0, x_1, \dots) &=V_n(Sx_0, \; A^*D_Px_0+Ax_1, \;A^*x_1+Ax_2, \dots, \;A^*x_{n-1}+Ax_n, \;0, \;0, \dots) \\
			& = (PSx_0, \;D_PSx_0, \; A^*D_Px_0+Ax_1, \; A^*x_1+Ax_2, \dots, \; A^*x_{n-2}+Ax_{n-1}, \; 0, \; 0,\dots).
		\end{split}
	\end{equation*}	
	Since $(S, P)$ is a $\Gamma$-contraction and $A$ is the fundamental operator of $(S,P)$, one can show that $D_P(A^*D_P+AD_P)=D_P^2S$. Thus, it follows that $A^*D_PP+AD_P=D_PS$ and hence, $T_nV_n=V_nT_n$.
	$ $
	\vspace{0.3cm}
	
	For any $x=(x_0, x_1, x_2, \dots) \in \mathcal{H} \oplus \ell^2(\mathcal{D}_P)$, it follows that 
	\begin{equation*}
		\begin{split}
		\|V_nx\|^2&=\|Px_0\|^2+\|D_Px_0\|^2+\|x_1\|^2+\dots +\|x_{n-1}\|^2=\|x_0\|^2+\|x_1\|^2+ \dotsc \|x_{n-1}\|^2 
			\leq \|x\|^2 \ \ \text{and} \\	 
	\|T_nx\|^2&=\|Sx_0\|^2+\|A^*D_Px_0+x_1\|^2+\|A^*x_1+x_2\|^2+\dots +\|A^*x_{n-1}+Ax_n\|^2 \\
			& \leq \|Sx_0\|^2+\|A^*D_Px_0+x_1\|^2+\overset{\infty}{\underset{j=1}{\sum}}\|A^*x_j+x_{j+1}\|^2\\
			&=\|T_A x\|^2.\\
		\end{split}
	\end{equation*}
	This gives that $\|V_n\| \leq 1$ and $\|T_n\| \leq \|T_A\| \leq 2$ for each $n$. The LHS in (\ref{eqn903}) is given by
	\begin{equation}\label{eqn904}
		\begin{split}
			T_n-T_n^*V_n = 
			\begin{bmatrix} 
				S & 0\\ 
				C & D_n
			\end{bmatrix}-\begin{bmatrix} 
				S^* & C^*\\ 
				0 & D_n^*
			\end{bmatrix}\begin{bmatrix} 
				P & 0\\ 
				B & E_n
			\end{bmatrix}
			=\begin{bmatrix} 
				S-S^*P-C^*B & -C^*E_n\\ 
				C-D_n^*B & D_n-D_n^*E_n
			\end{bmatrix}.
		\end{split}
	\end{equation}
	We further compute the operators appearing in the above $ 2 \times 2$ block matrix representation of $T_n-T_n^*V_n$. Since $C=	\begin{bmatrix} 
		D_PA & 0 &0 & \dots\\ 
	\end{bmatrix}^*$ and $B=\begin{bmatrix} 
		D_P & 0 &0 & \dots\\ 
	\end{bmatrix}^*$, we have
	\begin{equation*}
		\begin{split}
			C^*E_n& =\begin{bmatrix} 
				D_PA & 0 &0 & \dots\\ 
			\end{bmatrix}\begin{bmatrix} 
				0 & 0 & 0 & 0 & 0 &\dots \\
				I & 0 & 0 & 0 & 0 &\dots\\
				0 & I & 0 & 0 & 0 & \dots\\
				\vdots & \vdots & \ddots & \vdots & \vdots &\dots\\
				0& 0& 0& I & 0& \dots \\
				0& 0& 0& 0 & 0& \dots \\
				\vdots & \vdots & \vdots & \vdots & \vdots &\vdots\\
			\end{bmatrix}=0 \quad \text{and} \\
			D_n^*B&=\begin{bmatrix} 
				A^*& A& 0 & \dots & 0 & 0 & \dots  \\
				0 & A^* & A & \dots & 0 & 0 & \dots\\
				0 & 0 & A^* & \ddots & \dots  &\dots & \dots\\
				\vdots & \vdots & \vdots & \ddots & A & 0 & \dots\\
				0 & 0 & 0 & \dots & A^* & 0 & \dots\\
				0 & 0 & 0 & 0 & 0 & 0 & \dots\\
				\vdots & \vdots &\vdots &\vdots &\vdots &\vdots & \vdots\\
			\end{bmatrix}
			\begin{bmatrix}
				D_P \\
				0 \\ 0 \\ 0 \\ 0 \\ 0 \\ \dots \\
			\end{bmatrix}=\begin{bmatrix}
				A^*D_P \\
				0 \\ 0 \\ 0 \\ 0 \\ 0 \\ \dots \\
			\end{bmatrix}=C.
		\end{split}
	\end{equation*}
Lastly, we compute $D_n-D_n^*E_n$ which has the following $ 2 \times 2$ block representation with respect to the same decomposition as in the matrix form of $(T_n, V_n)$ given in (\ref{eqn904}).
	\begin{equation*}
		\begin{split}
			D_n-D_n^*E_n&=\begin{bmatrix} 
				\hat{A}_n &0\\
				0&0\\
			\end{bmatrix} - \begin{bmatrix} 
				\hat{A}_n^* &0\\
				0&0\\
			\end{bmatrix}\begin{bmatrix} 
				\hat{I}_n &0\\
				0&0\\
			\end{bmatrix}
			=\begin{bmatrix} 
				\hat{A}_n-\hat{A}_n^*\hat{I}_n &0\\
				0&0\\
			\end{bmatrix}.
		\end{split}
	\end{equation*}
	It follows from (\ref{eqn902}) and (\ref{eqn904}) that 
	\begin{equation*}
		\begin{split}
			T_n-T_n^*V_n &= 
			\begin{bmatrix} 
				0 & 0\\ 
				0 & D_n-D_n^*E_n
			\end{bmatrix}=[a_{ij}]_{i,j=0}^{\infty},
		\end{split}
	\end{equation*}    
	where $a_{n,n}=A$ and other block entries are zero.
	For any $x=(x_0, x_1, \dots) \in \mathcal{H} \oplus \ell^2(\mathcal{D}_P)$, we have \begin{equation*}
		\begin{split}
			(T_n-T_n^*V_n)(x_0, x_1, x_2, \dots)=(0,0, \dots, 0, Ax_n, 0,0, \dots),
		\end{split}
	\end{equation*}
	where $Ax_n$ is at the $(n+1)$-th position (counting from zero) and before that the $n$ entries are zero.
	
	\item We compute the defect operator and the  defect space for $V_n$.
	\begin{equation*}
		\begin{split}
			D_{{V_n}}^2&=I-V_n^*V_n
			=I- \begin{bmatrix} 
				P^* & B^*\\ 
				0 & E_n^*
			\end{bmatrix}\begin{bmatrix} 
				P & 0\\ 
				B & E_n
			\end{bmatrix}
			=I-\begin{bmatrix} 
				P^*P+B^*B & B^*E_n\\ 
				E_n^*B & E_n^*E_n
			\end{bmatrix}.
		\end{split}
	\end{equation*}
	We compute each block appearing in the above block matrix representation of $D_{V_n}^2$. Since $B^*B=D_{P}^2$, we must have that $P^*P+B^*B=I$. Note that
	\begin{equation*}
		\begin{split}
			B^*E_n=\begin{bmatrix} 
				D_P & 0 &0 & \dots\\ 
			\end{bmatrix}\begin{bmatrix} 
				0 & 0 & 0 & 0 & \dots &\dots \\
				I & 0 & 0 & 0 & \dots &\dots\\
				0 & I & 0 & 0 & \dots& \dots\\
				\vdots & \vdots & \ddots & \vdots & \vdots &\vdots\\
				0& 0& 0& I & 0& \dots \\
				0& 0& 0& 0 & 0& \dots \\
				\vdots & \vdots & \vdots & \vdots & \vdots &\vdots\\
			\end{bmatrix}=0.\\
		\end{split}
	\end{equation*}
	
	Next, we show that $E_n^*E_n=\begin{bmatrix}
		I_{n-1} & 0\\ 0& 0\\
	\end{bmatrix}$. We shall use the $2 \times 2$ block matrix representation of $E_n$.
	\begin{equation*}
		\begin{split}
			E_n^*E_n=\begin{bmatrix}
				\hat{I}_n^* & 0\\ 0& 0\\
			\end{bmatrix}\begin{bmatrix}
				\hat{I}_n & 0\\ 0& 0\\
			\end{bmatrix}=\begin{bmatrix}
				\hat{I}_n^* \hat{I}_n & 0\\ 0& 0\\
			\end{bmatrix}=\begin{bmatrix}
				I_{n-1} & 0\\ 0& 0\\
			\end{bmatrix}.
		\end{split}
	\end{equation*}
	Then  
	\begin{equation*}
		\begin{split}
			D_{{V_n}}^2
			=I-\begin{bmatrix} 
				I & 0\\ 
				0 & E_n^*E_n
			\end{bmatrix}
			=I-\begin{bmatrix} 
				I_n & 0\\ 
				0 & 0
			\end{bmatrix}
			=\begin{bmatrix} 
				O_{n} & 0\\ 
				0 & I
			\end{bmatrix}.
		\end{split}
	\end{equation*}
	Hence, $D_{V_n}=\begin{bmatrix} 
		O_{n} & 0\\ 
		0 & I
	\end{bmatrix}$ 
	and the defect space $\mathcal{D}_{V_n}=\underbrace{(0 \oplus \dots \oplus 0)}_{(n-times)} \oplus \mathcal{D}_P \oplus \mathcal{D}_P \oplus \dots$, i.e.,
	\begin{equation*}
		\mathcal{D}_{V_n}=\{(x_0, x_1, \dots) \in \mathcal{H}\oplus \ell^2(\mathcal{D}_P): x_0=x_1= \dots=x_{n-1}=0  \}.
	\end{equation*}
	Finally, we define our candidate for the fundamental operator of $(T_n, V_n)$. Consider the operator  
	\begin{equation*}
		Y_n: \mathcal{D}_{V_n} \to \mathcal{D}_{V_n} \quad \text{given by} \quad 			Y_n(0, 0, \dots, 0, x_{n}, x_{n+1}, x_{n+2}, \dots)= (0, 0, \dots, 0, Ax_{n}, 0, 0, \dots),
	\end{equation*}
	where $Ax_n$ is at the $(n+1)$-th position (counting from zero) and the first $n$ entries are zero. Since $\omega(Y_n)= \omega(A)$, we have $\omega(Y_n) \leq 1$. The RHS in (\ref{eqn903}) becomes
	\begin{equation*}
		\begin{split}
			D_{V_n}Y_nD_{V_n}(x_0, x_1, \dots)&=
			D_{V_n}Y_n(0,0, \dots, 0, x_n, x_{n+1}, x_{n+2}, \dots)\\
			&=D_{V_n}(0,0, \dots, 0, Ax_n, 0, 0, \dots)\\
			&=(0,0, \dots, 0, Ax_n, 0, 0, \dots)\\
			&=(T_n-T_n^*V_n)(x_0, x_1, \dots)\\
		\end{split}
	\end{equation*}
	for any $x=(x_0, x_1, \dots) \in \mathcal{H} \oplus \ell^2(\mathcal{D}_P)$. This shows that  
	\[
	D_{V_n}Y_nD_{V_n}= T_n-T_n^*V_n \ \mbox{with} \ \omega(Y_n) \leq 1 \ \mbox{and} \ Y_n \in B(\mathcal{D}_{V_n}).
	\]
	Therefore, $(T_n, V_n)$ on $\mathcal{H}\oplus \ell^2(\mathcal{D}_P)$ is a sequence of $\Gamma$-contractions. The proof is complete.
\end{proof}
		
We are now in a position to present the following approximation result.
		
	\begin{lem}\label{7.9}
		
		Let $(S,P)$ be a $\Gamma$-contraction on a Hilbert space $\mathcal{H}$ with fundamental operator $A$. Suppose both $(S,P)$ and $(A,0)$ are $\Gamma$-distinguished $\Gamma$-contractions. Then there is a sequence of $\Gamma$-distinguished $\Gamma$-contractions on $\mathcal{H}\oplus \ell^2(\mathcal{D}_P)$ which converges to $(T_A, V_0)$ in the strong operator topology.
	\end{lem}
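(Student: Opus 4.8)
The plan is to use the sequence $(T_n, V_n)$ constructed in \eqref{T_n, V_n}--\eqref{A_n, I_n}, which Lemma \ref{A1.2} already guarantees to be $\Gamma$-contractions. Two things remain: first, show each $(T_n, V_n)$ is $\Gamma$-distinguished under the hypothesis that both $(S,P)$ and $(A,0)$ are $\Gamma$-distinguished; second, show $(T_n, V_n) \to (T_A, V_0)$ in the strong operator topology.

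For the first part, I would peel off the block structure using Lemma \ref{lem1014}. Writing $(T_n, V_n)$ in the $2 \times 2$ block form of \eqref{T_n, V_n} with respect to $\mathcal{H} \oplus \ell^2(\mathcal{D}_P)$, its diagonal corners are $(S, P)$ and $(D_n, E_n)$. So by Lemma \ref{lem1014}, it suffices to produce a $\Gamma$-distinguished polynomial annihilating $(S,P)$ and one annihilating $(D_n, E_n)$; their product then annihilates $(T_n, V_n)$ and, again by Lemma \ref{lem1014}, $(T_n, V_n)$ is a $\Gamma$-distinguished $\Gamma$-contraction. The first polynomial exists by hypothesis. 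For $(D_n, E_n)$, observe from \eqref{D_n, E_n} that it is, up to the trivial zero summand, the pair $(\hat A_n, \hat I_n)$ on $\underset{n}{\oplus}\mathcal{D}_P$, which is a finite block-bidiagonal (nilpotent-type) pair: indeed $\hat I_n$ is $n$-step nilpotent, hence $(\hat A_n, \hat I_n)$ is annihilated by any polynomial of the form $z_2^n \cdot (\text{something})$ or, more usefully, one can iterate Lemma \ref{lem1014} on the further $2 \times 2$ decomposition of $(\hat A_n, \hat I_n)$ into corners $(A, 0)$ and $(\hat A_{n-1}, \hat I_{n-1})$. Since $(A,0)$ is $\Gamma$-distinguished by hypothesis, an induction on $n$ (base case $n=1$: the pair is $(A,0)$ itself) shows $(\hat A_n, \hat I_n)$ is $\Gamma$-distinguished; equivalently, if $q$ is a $\Gamma$-distinguished polynomial with $q(A,0)=0$, then $q$ annihilates each $(A,0)$-corner and one checks directly that an appropriate power $q^n$ (or the explicit polynomial $q(z_1,z_2)$ composed with the nilpotency relation) annihilates $(D_n, E_n)$. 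Either way, $(D_n, E_n)$ is $\Gamma$-distinguished, so $(T_n, V_n)$ is a $\Gamma$-distinguished $\Gamma$-contraction for every $n$.

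For the convergence, I would compare the actions of $T_n, V_n$ and $T_A, V_0$ on a fixed vector $x = (x_0, x_1, x_2, \dots) \in \mathcal{H} \oplus \ell^2(\mathcal{D}_P)$. From the defining formulas, $V_n x$ and $V_0 x$ agree in the first $n$ coordinates and $V_n x$ has zeros thereafter, so $\|V_n x - V_0 x\|^2 = \sum_{j \geq n} \|x_j\|^2 \to 0$ as $n \to \infty$ since $x \in \ell^2$; similarly $\|T_n x - T_A x\|^2$ is a tail sum of the form $\sum_{j \geq n} \|A^* x_j + A x_{j+1}\|^2$ (up to one boundary term), which is bounded by a constant times $\sum_{j \geq n}\|x_j\|^2$ and hence tends to $0$. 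This gives $V_n \to V_0$ and $T_n \to T_A$ strongly, completing the proof.

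The main obstacle I expect is the bookkeeping in the first part: making precise which $\Gamma$-distinguished polynomial annihilates the finite block pair $(D_n, E_n)$ built from the single $\Gamma$-distinguished datum $(A,0)$, and verifying the hypotheses of Lemma \ref{lem1014} cleanly through the nested block decomposition (one must check commutativity of each corner pair and that the off-diagonal intertwining is consistent, though these follow from the structure already verified in Lemmas \ref{A1.1} and \ref{A1.2}). The convergence step is routine once the tail-sum estimates are written down.
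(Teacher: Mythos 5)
Your proposal is correct and follows essentially the same route as the paper: the paper also takes the sequence $(T_n,V_n)$, invokes Lemma \ref{A1.2} for the $\Gamma$-contraction property, reduces $\Gamma$-distinguishedness of $(T_n,V_n)$ via Lemma \ref{lem1014} to that of $(S,P)$ and $(\hat A_n,\hat I_n)$, obtains the latter by repeated (inductive) application of Lemma \ref{lem1014} starting from the corner $(A,0)$, and finishes with the same tail-sum estimates for strong convergence. The only cosmetic difference is that the paper does not pursue your nilpotency remark or the explicit $q^n$ candidate, relying instead solely on the ``moreover'' clause of Lemma \ref{lem1014}.
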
 

\begin{proof}
		
		Consider the sequence of commuting pair of operators $(T_n, V_n)$ on $\mathcal{H}\oplus \ell^2(\mathcal{D}_P)$ defined by 
		\begin{equation*}
			\begin{split}
				& T_n(x_0, x_1, x_2,  \dots)=(Sx_0, \; A^*D_Px_0+Ax_1, \; A^*x_1+Ax_2, \; \dots, \; A^*x_{n-1}+Ax_n, \; 0, \; 0, \dots);\\
				& V_n(x_0, x_1, x_2, \dots)=(Px_0, \; D_Px_0, \; x_1, \; \dots, \; x_{n-1},\; 0, \; 0, \dots).\\			
			\end{split}
		\end{equation*}
		It follows from Lemma \ref{A1.2} that each $(T_n, V_n)$ is a $\Gamma$-contraction on $\mathcal{H} \oplus \ell^2(\mathcal{D}_P)$. We show that each $(T_n, V_n)$ is $\Gamma$-distinguished. It is clear that each $(D_n, E_n)$ is $\Gamma$-distinguished if and only if each $(\hat{A}_n, \hat{I}_n)$ is $\Gamma$-distinguished. Since $(A, 0)$ is $\Gamma$-distinguished, a repeated application of Lemma \ref{lem1014} yields that each $(\hat{A}_n, \hat{I}_n)$ is $\Gamma$-distinguished. Again, using Lemma \ref{lem1014}, we can show that each $(T_n, V_n)$ is $\Gamma$-distinguished as $(S, P)$ and $(D_n, E_n)$ are $\Gamma$-distinguished for every $n \in \mathbb{N}$. Next, we show that the sequences $\{T_n\}_{n \in \mathbb{N}}$ and $\{V_n\}_{n \in \mathbb{N}}$ converge to $T_A$ and $V_0$ respectively in the strong operator topology. Given $x=(x_0, x_1, x_2, \dots) \in \mathcal{H}\oplus \ell^2(\mathcal{D}_P)$, we have that 
		$\displaystyle 
				\|T_Ax-T_nx\|^2= \overset{\infty}{\underset{j=n}{\sum}}\|A^*x_j+Ax_{j+1}\|^2 \to 0 $ as $n \to \infty$, because this sum is a tail of a convergent series with limit $\|T_Ax\|^2$. Similarly, we have that $\displaystyle 
				\|V_0x-V_nx\|^2= \overset{\infty}{\underset{j=n}{\sum}}\|x_j\|^2 \to 0$ as $n \to \infty$	being the tail of a convergent series with limit $\|x\|^2$.
	\end{proof}
	
		The $\Gamma$-isometric dilation $(T_A, V_0)$ of a $\Gamma$-distinguished $\Gamma$-contraction $(S, P)$ may or may not be $\Gamma$-distinguished. In Example \ref{Shift_counter_example}, we have already encountered one such case. It is worth finding out necessary and sufficient conditions such that the $\Gamma$-isometry $(T_A, V_0)$ becomes $\Gamma$-distinguished. The following result is a first step in this direction.	
	
		\begin{thm}	
		
		Let $(S,P)$ be a $\Gamma$-contraction acting on a Hilbert space $\mathcal H$. Then the following are equivalent.
		\begin{enumerate}
			\item $(T_A, V_0)$ is a $\Gamma$-distinguished $\Gamma$-isometry.
			
			\smallskip
				
			\item There is a $\Gamma$-distinguished polynomial that annihilates every $\Gamma$-contraction in the sequence $\{(T_n, V_n)\}_{n \in \mathbb{N}}$ as defined in \eqref{T_n, V_n}.
			
			\smallskip
					
			\item $(S,P)$ is $\Gamma$-distinguished and there is a $\Gamma$-distinguished polynomial annihilating each $\Gamma$-contraction in the sequence $\{(\hat{A}_n, \hat{I}_n)\}_{n \in \mathbb{N}}$ given in \eqref{A_n, I_n}.
		\end{enumerate}
		\end{thm}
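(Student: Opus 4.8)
The plan is to strip away the block lower-triangular structure of $(T_A,V_0)$ and of the approximants $(T_n,V_n)$ using Lemma \ref{lem1014} and Remark \ref{7.2}, reducing all three conditions to one statement about the pure $\Gamma$-isometry $(D,E)$ of \eqref{D_E} and its finite truncations $(\hat{A}_n,\hat{I}_n)$ of \eqref{A_n, I_n}. For $(1)\Leftrightarrow(3)$, Remark \ref{7.2} says $(T_A,V_0)$ is $\Gamma$-distinguished precisely when both $(S,P)$ and $(D,E)$ are. So it suffices to prove: $(D,E)$ is $\Gamma$-distinguished if and only if some single $\Gamma$-distinguished polynomial annihilates every $(\hat{A}_n,\hat{I}_n)$. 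Granting this, $(1)$ reads ``$(S,P)$ is $\Gamma$-distinguished and some $\Gamma$-distinguished polynomial annihilates each $(\hat{A}_n,\hat{I}_n)$'', which is exactly $(3)$.

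To prove that equivalence I would realize $(\hat{A}_n,\hat{I}_n)$ as a compression of $(D,E)$. Identify the space of $(\hat{A}_n,\hat{I}_n)$ with the subspace $\mathcal M_n\subseteq\ell^2(\mathcal D_P)$ of sequences supported in the first $n$ coordinates. A glance at \eqref{D_E} shows $\mathcal M_n^\perp$ is invariant under both $D$ and $E$, so $\mathcal M_n$ is semi-invariant for the commuting pair $(D,E)$ and the compression of an ordinary polynomial to $\mathcal M_n$ is multiplicative; thus $q(\hat{A}_n,\hat{I}_n)=P_{\mathcal M_n}q(D,E)|_{\mathcal M_n}$ for all $q\in\C[z_1,z_2]$ and all $n$. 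Hence $q(D,E)=0$ forces $q(\hat{A}_n,\hat{I}_n)=0$ for every $n$. For the converse, since $D$ and $E$ each carry sequences supported in the first $m$ coordinates to sequences supported in the first $m+1$ coordinates, $q(D,E)\mathcal M_m\subseteq\mathcal M_{m+\deg q}$; so if $q(\hat{A}_n,\hat{I}_n)=0$ for all $n$, then for $x\in\mathcal M_m$ and $n\ge m+\deg q$ we get $q(D,E)x=q(\hat{A}_n,\hat{I}_n)x=0$, and density of $\bigcup_m\mathcal M_m$ gives $q(D,E)=0$. (One may equally argue on the unitarily equivalent model $(T_{A+A^*z},T_z)$ on $H^2(\mathcal D_P)$, with $\mathcal M_n$ the polynomials of degree $<n$.) This settles $(1)\Leftrightarrow(3)$.

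For $(2)\Leftrightarrow(3)$: if a $\Gamma$-distinguished $q$ annihilates every $(T_n,V_n)$, then comparing the diagonal blocks in \eqref{T_n, V_n} gives $q(S,P)=0$ and $q(D_n,E_n)=0$, and by \eqref{D_n, E_n} the latter equals $q(\hat{A}_n,\hat{I}_n)\oplus q(0,0)I$, so $q(\hat{A}_n,\hat{I}_n)=0$ for all $n$; this is $(3)$ with $q$ witnessing both clauses. Conversely, given $(3)$, pick $\Gamma$-distinguished $q_1$ with $q_1(S,P)=0$ and $\Gamma$-distinguished $q_2$ with $q_2(\hat{A}_n,\hat{I}_n)=0$ for all $n$. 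After replacing $q_2$ by $q_2\cdot(4z_2-z_1^2)$ — still $\Gamma$-distinguished, a product of $\Gamma$-distinguished polynomials by Example \ref{3.4}, and still annihilating every $(\hat{A}_n,\hat{I}_n)$ by commutativity — we may assume $q_2(0,0)=0$, whence $q_2(D_n,E_n)=q_2(\hat{A}_n,\hat{I}_n)\oplus q_2(0,0)I=0$. Lemma \ref{lem1014} applied to \eqref{T_n, V_n}, with $q_1$ annihilating the $(1,1)$-block $(S,P)$ and $q_2$ the $(2,2)$-block $(D_n,E_n)$, then yields $(q_1q_2)(T_n,V_n)=0$ for every $n$; since $q_1q_2$ is $\Gamma$-distinguished and each $(T_n,V_n)$ is a $\Gamma$-contraction by Lemma \ref{A1.2}, condition $(2)$ follows.

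The crux is the second half of the compression step — lifting a \emph{single} polynomial that kills all the truncations $(\hat{A}_n,\hat{I}_n)$ to one that kills the full pure $\Gamma$-isometry $(D,E)$. This depends on the degree-growth inclusion $q(D,E)\mathcal M_m\subseteq\mathcal M_{m+\deg q}$ together with the density of $\bigcup_n\mathcal M_n$, and it genuinely uses uniformity in $n$: knowing only that each $(\hat{A}_n,\hat{I}_n)$ is $\Gamma$-distinguished by a possibly $n$-dependent polynomial would not suffice — which is precisely why $(3)$ is phrased with one common polynomial. The rest is routine bookkeeping with block lower-triangular operator matrices and the cited results.
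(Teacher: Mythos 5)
Your argument is correct, and for the substantive implication it takes a genuinely different route from the paper. The paper proves the cycle $(1)\Rightarrow(2)\Rightarrow(3)\Rightarrow(2)\Rightarrow(1)$ and closes it with an approximation argument: by Lemma \ref{7.9} the pairs $(T_n,V_n)$ converge to $(T_A,V_0)$ in the strong operator topology with uniformly bounded norms, so a single polynomial annihilating every $(T_n,V_n)$ annihilates the limit. You instead work entirely on the model side: since $\mathcal{M}_n^{\perp}$ is jointly invariant for $(D,E)$, the pair $(\hat{A}_n,\hat{I}_n)$ is the compression of $(D,E)$ to the co-invariant subspace $\mathcal{M}_n$ and compressions of polynomials are multiplicative; the finite-propagation inclusion $q(D,E)\mathcal{M}_m\subseteq\mathcal{M}_{m+\deg q}$ together with density of $\bigcup_m\mathcal{M}_m$ then converts ``one $q$ kills all truncations'' into $q(D,E)=0$, and Remark \ref{7.2} finishes $(1)\Leftrightarrow(3)$. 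This is more elementary (no operator topology is needed beyond boundedness of $q(D,E)$) and it pinpoints exactly where the uniformity in $n$ of the annihilating polynomial is used. Your treatment of $(3)\Rightarrow(2)$ is also more careful than the paper's: the paper passes from $h(\hat{A}_n,\hat{I}_n)=0$ directly to $h(D_n,E_n)=0$, which in view of the extra zero block in \eqref{D_n, E_n} actually requires $h(0,0)=0$; your device of multiplying by the $\Gamma$-distinguished polynomial $4z_2-z_1^2$ supplies this normalization while preserving both the $\Gamma$-distinguished property and the annihilation of each $(\hat{A}_n,\hat{I}_n)$. Both arguments rest on the same structural inputs (Lemma \ref{lem1014}, Remark \ref{7.2}, and the block forms \eqref{T_n, V_n}--\eqref{A_n, I_n}); yours trades the strong-operator-limit step of Lemma \ref{7.9} for the semi-invariance and density argument, at no loss of generality.
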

	
		\begin{proof}
		We will prove that $(1) \implies (2) \implies (3) \implies (2) \implies (1)$.
		
			\vspace{0.2cm}
		
\noindent 	{$(1) \implies (2)$.} Let $(T_A, V_0)$ be a $\Gamma$-distinguished $\Gamma$-isometry. From the $2 \times 2$ block matrix form of $(T_n, V_n)$ as given in (\ref{T_n, V_n}), it follows that $(T_n, V_n)$ is $\Gamma$-distinguished if and only if $(S, P)$ and $(D_n, E_n)$ are $\Gamma$-distinguished. Since $(T_A, V_0)$ is $\Gamma$-distinguished, as a consequence of Lemma \ref{lem1014} and Proposition \ref{7.6}, it follows that the $\Gamma$-distinguished polynomial annihilating $(T_A, V_0)$ also annihilates $(S, P)$ and $(\hat{A}_n, \hat{I}_n)$. From the definition of $(D_n, E_n)$, it is clear that the same is true for $(D_n, E_n)$.  
		
\medskip
				
\noindent 	{$(2) \implies (3)$.} Let $f$ be a $\Gamma$-distinguished polynomial such that $f(T_n, V_n)=0$ for every $n \in\mathbb{N}$. Given the $2 \times 2$ block matrix form of $(T_n, V_n)$ in (\ref{T_n, V_n}), it follows from Lemma \ref{lem1014} that each $f(D_n, E_n)=0$ and $f(S,P)= 0$. Applying Lemma \ref{lem1014} on (\ref{D_n, E_n}), we have $f(\hat{A}_n, \hat{I}_n)=0$ for every $n \in \N$.
	
		\vspace{0.2cm}
		
\noindent 		{$(3) \implies (2)$.} Let $g$ and $h$ be $\Gamma$-distinguished polynomials such that
		$g(S,P)=0$ and $h(\hat{A}_n, \hat{I}_n)=0$ for every $n \in \N$. By Lemma \ref{lem1014} and (\ref{D_n, E_n}), $h(D_n, E_n)=0$ for all $n \in \N$. It follows from Lemma \ref{lem1014} that the $\Gamma$-distinguished polynomial $f(z_1, z_2)=g(z_1, z_2)h(z_1, z_2)$ annihilates each $(T_n, V_n)$.   
	
		\vspace{0.2cm}
		
	\noindent 	{$(2) \implies (1)$.} Let $f$ be a $\Gamma$-distinguished polynomial such that $f(T_n, V_n)=0$  for every $n \in\mathbb{N}$. It follows from Lemma \ref{7.9}  that $\{T_n\}_{n\in \N}$ and $\{V_n\}_{n \in \N}$ converge to $T_A$ and $V_0$ respectively in the strong operator topology. Moreover, $\|T_n\| \leq 2$ and $\|V_n\|\leq 1$ for each $n \in \N$. Consequently, $\{f(T_n, V_n)\}_{n \in \N}$ converges to $f(T_A, V_0)$ strongly and so, $f(T_A, V_0)=0$. The proof is complete.	
	\end{proof}

	The above result can be improved further if we assume an additional hypothesis on the fundamental operator of a $\Gamma$-contraction. In Proposition \ref{7.6}, we proved that if $(T_A, V_0)$ is $\Gamma$-distinguished, then $(A, 0)$ is $\Gamma$-distinguished as well. The subsequent results show that the converse is also true if one assumes that $A$ is hyponormal. 
	
		\begin{thm}\label{7.11}
		Assume that $(S,P)$ is a $\Gamma$-distinguished $\Gamma$-contraction and the fundamental operator $A$ of $(S,P)$ is normal. Then the $\Gamma$-isometry $(T_A, V_0)$ is $\Gamma$-distinguished if and only if $(A,0)$ is $\Gamma$-distinguished.
	\end{thm}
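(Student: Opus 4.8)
The forward direction needs no new argument: Proposition \ref{7.6} already shows that if $(T_A,V_0)$ is $\Gamma$-distinguished, then the $\Gamma$-distinguished polynomial annihilating it also annihilates $(A,0)$, so $(A,0)$ is $\Gamma$-distinguished. The plan is therefore to prove the converse, assuming in addition to the standing hypotheses (that $(S,P)$ is a $\Gamma$-distinguished $\Gamma$-contraction and $A$ is normal) that $(A,0)$ is $\Gamma$-distinguished. By Remark \ref{7.2} it suffices to produce a $\Gamma$-distinguished polynomial annihilating the pure $\Gamma$-isometry $(D,E)$ of \eqref{D_E}; combining it with a $\Gamma$-distinguished polynomial that annihilates $(S,P)$ and passing to the product then yields, via the ``moreover'' part of Lemma \ref{lem1014}, a $\Gamma$-distinguished polynomial annihilating $(T_A,V_0)$.

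The structural input I would use is that $(A,0)$ being $\Gamma$-distinguished forces $\sigma(A)$ to be a finite subset of $\D$ (Remark \ref{rem6.7}). Since $A$ is also normal, the spectral theorem then gives $A=\sum_{j=1}^{k}\lambda_j P_j$, where $\lambda_1,\dots,\lambda_k\in\D$ are the distinct eigenvalues of $A$ and $P_1,\dots,P_k$ are mutually orthogonal projections on $\mathcal D_P$ with $\sum_j P_j=I$. Hence $A+A^*z=\sum_j(\lambda_j+\overline{\lambda_j}z)P_j$ is diagonal with respect to $\mathcal D_P=\bigoplus_j \text{Ran}\,P_j$, and this suggests taking
\[
p(z_1,z_2)=\prod_{j=1}^{k}\bigl(z_1-\lambda_j-\overline{\lambda_j}\,z_2\bigr).
\]

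I would then verify two things. First, that $p$ annihilates $(D,E)$: recalling that $(D,E)$ is unitarily equivalent to $(T_{A+A^*z},T_z)$ on $H^2(\mathcal D_P)$ and that the assignment of an analytic Toeplitz operator to a bounded analytic operator-valued symbol is a unital algebra homomorphism, one gets $p(T_{A+A^*z},T_z)=T_{p(A+A^*z,\,z)}$; on $\text{Ran}\,P_j$ the symbol acts as the scalar polynomial $p(\lambda_j+\overline{\lambda_j}z,\,z)$, which vanishes identically because its $j$-th factor does, so $p(A+A^*z,z)=0$, hence $p(T_{A+A^*z},T_z)=0$, and Lemma \ref{lem5.5} gives $p(D,E)=0$ (alternatively this follows from a direct block computation on \eqref{D_E}, mirroring the argument of Proposition \ref{7.6}). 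Second, that $p$ is $\Gamma$-distinguished: each factor $g_j(z_1,z_2)=z_1-\lambda_j-\overline{\lambda_j}z_2$ equals $-\det(\lambda_j+z_2\overline{\lambda_j}-z_1)$, and since the $1\times1$ matrix $[\lambda_j]$ has $\omega([\lambda_j])=|\lambda_j|<1$, Theorem \ref{PalShalit} shows that $g_j$ is a $\Gamma$-distinguished polynomial; consequently $Z(p)=\bigcup_j Z(g_j)$ meets $\G$ (e.g.\ at $(\lambda_1,0)$) and satisfies $Z(p)\cap\partial\Gamma=\bigcup_j\bigl(Z(g_j)\cap\partial\Gamma\bigr)\subseteq b\Gamma=Z(p)\cap b\Gamma$, using $b\Gamma\subseteq\partial\Gamma$, so $p$ is $\Gamma$-distinguished.

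I do not anticipate a serious obstacle here. The real content is recognising that Remark \ref{rem6.7} reduces matters to the case of $A$ normal with finite spectrum inside $\D$ --- which is exactly the situation that Theorem \ref{PalShalit} handles one eigenvalue at a time --- and that this finiteness is genuinely necessary, since for $\sigma(A)$ infinite no polynomial can annihilate $(T_{A+A^*z},T_z)$. The only point deserving a little care will be the identity $p(T_{A+A^*z},T_z)=T_{p(A+A^*z,\,z)}$, i.e.\ the multiplicativity of the analytic Toeplitz calculus, and even that can be sidestepped by the explicit block-matrix computation with \eqref{D_E}.
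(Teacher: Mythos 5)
Your proof is correct, and it arrives at essentially the same annihilating polynomial as the paper, namely $\prod_{j}(\lambda_j+\overline{\lambda_j}z_2-z_1)$ with $\lambda_j$ running over $\sigma(A)$; the reduction to the pure $\Gamma$-isometry $(D,E)\cong(T_{A+A^*z},T_z)$ via Remark \ref{7.2}, and the use of Proposition \ref{7.6} / Remark \ref{rem6.7} to place $\sigma(A)$ as a finite subset of $\D$, also match the paper. Where you genuinely diverge is in \emph{how} the annihilation $p(A+A^*z,z)=0$ is verified. The paper never invokes the spectral theorem: it takes the minimal polynomial $p(A)=(\overline{\alpha}_1-A)\cdots(\overline{\alpha}_m-A)=0$, sets $X_j=\overline{\alpha}_j-A$, $Y_j=X_j^*$, and uses normality only through the commutation $X_jY_i=Y_iX_j$ together with the identity $0=p(A)p(A)^*=(Q_1\cdots Q_m)(Q_1\cdots Q_m)^*$ to conclude that every mixed product $Q_1\cdots Q_m$ ($Q_j\in\{X_j,Y_j\}$) vanishes; expanding $\prod_j(X_j+Y_jz)$ then kills every coefficient of $z^k$. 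You instead diagonalize $A=\sum_j\lambda_jP_j$ directly (legitimate, since a normal operator with finite spectrum has this form) and observe that the operator-valued symbol $p(A+A^*z,z)$ vanishes on each $\operatorname{Ran}P_j$ because its $j$-th factor does, then pass to $T_{p(A+A^*z,z)}=p(T_{A+A^*z},T_z)$ by multiplicativity of the analytic multiplication-operator calculus. Your route is arguably more transparent and also cleaner on the side issue of why each linear factor is $\Gamma$-distinguished: you cite Theorem \ref{PalShalit} with the $1\times1$ matrix $[\lambda_j]$, which covers arbitrary complex $\lambda_j\in\D$ at once, whereas the paper points to the M\"obius computation of Example \ref{7.4}, written there only for real $a\in(0,1)$. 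What the paper's algebraic argument buys in exchange is that it sidesteps the spectral decomposition entirely and stays within the minimal-polynomial framework already set up in Proposition \ref{7.6}. Both arguments are sound.
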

	
	\begin{proof} Assume that $(A,0)$ is $\Gamma$-distinguished. Following the proof of Proposition \ref{7.6}, one can deduce that there is a non-constant minimal polynomial $p(z)$ which annihilates $A$. Thus, 
	$
			p(A)=(\overline{\alpha}_1-A)(\overline{\alpha}_2-A) \dots (\overline{\alpha}_m-A)=0,
	$	
	where $\alpha_1, \dotsc, \alpha_m \in \C$. Define $X_j=(\overline{\alpha}_j-A)$ and $Y_j=X_j^*=(\alpha_j-A^*)$ for  $1 \leq j\leq m$. For $Q_j \in \{X_j, Y_j \}$ with  $1 \leq j \leq m$, we show that $Q_1\dots Q_m=0$. To prove this, we consider the operators on $\mathcal{D}_P$  given by
		\[ 
		R_j= \left\{
		\begin{array}{ll}
			X_j &  \mbox{if} \ Q_j=Y_j \\
			Y_j & \mbox{if} \ Q_j=X_j\\
		\end{array} \quad (1 \leq j \leq m).
		\right. 
		\]
		Since $A$ is normal, we have that $X_jY_i=Y_iX_j$ for $1 \leq i, j \leq m$. Then
			\begin{equation*}
			\begin{split}
				0=p(A)p(A)^*
				=(X_1\dots X_m)(Y_m\dots Y_1)
				=(Q_1\dots Q_m)(R_m\dots R_1)
				=(Q_1\dots Q_m)(Q_1\dots Q_m)^*,\\
			\end{split}
		\end{equation*}
		and hence, $Q_1\dots Q_m=0$. From Remark \ref{rem6.7}, it follows that $r(A)<1$ and so, $|{\alpha}_j| <1$ for each $j$.  Hence, the polynomial $\overline{\alpha}_j+\alpha_jz_2-z_1$ is $\Gamma$-distinguished for each $j$ as shown in Example \ref{7.4}. This means that the polynomial defined by 
	$\displaystyle
			f(z_1, z_2)=\overset{m}{\underset{j=1}{\prod}}(\overline{\alpha}_j+\alpha_jz_2-z_1)
	$
		is a $\Gamma$-distinguished polynomial being a finite product of such polynomials. Next, we prove that $f(z_1, z_2)$ annihilates $(T_{A+A^*z},T_z)$ on $H^2(\mathcal{D}_P)$. Indeed, we have
		\begin{equation*}
			\begin{split}
				f(A+A^*z,z)=\overset{m}{\underset{j=1}{\prod}}\bigg(\overline{\alpha}_j+\alpha_jz-(A+A^*z)\bigg)
				&=\overset{m}{\underset{j=1}{\prod}}\bigg((\overline{\alpha}_j-A)+(\alpha_j-A^*)z\bigg)\\
				&=\overset{m}{\underset{j=1}{\prod}}(X_j+Y_jz)\\
				&=\overset{m}{\underset{j=1}{\prod}}X_j+z\overset{m}{\underset{j=1}{\prod}}Q_{j}^{(1)}+z^2\overset{m}{\underset{j=1}{\prod}}Q_j^{(2)}+ \dots + z^m\overset{m}{\underset{j=1}{\prod}}Y_j,
			\end{split}
		\end{equation*}
		where $Q_j^{(k)}$ is either $X_j$ or $Y_j$. Hence, $\overset{m}{\underset{j=1}{\prod}}Q_j^{(k)}=0$ for each $k$. Consequently, $f$ annihilates $(T_{A+A^*z},T_z)$ and since $(D,E)$ is unitarily equivalent to $(T_{A+A^*z},T_z)$, we have $f(D,E)=0$. Now, it follows from Remark \ref{7.2} that $(T_A, V_0)$ is $\Gamma$-distinguished. The converse is a direct consequence of Remark \ref{7.2}. The proof is now complete.
	\end{proof}
	
		Recall that an operator $T$ defined on a Hilbert space $\mathcal{H}$ is said to be \textit{hyponormal} if $T^*T-TT^* \geq 0$, or equivalently $\|T^*x\| \leq \|Tx\|$ for every $x$ in $\mathcal{H}$.  Next, we prove that the above result holds when the fundamental operator of a $\Gamma$-distinguished $\Gamma$-contraction is assumed to be hyponormal. 
		
	\begin{thm}\label{7.12}
		Assume that $(S,P)$ is a $\Gamma$-distinguished $\Gamma$-contraction and the fundamental operator $A$ of $(S,P)$ is hyponormal. Then $(T_A, V_0)$ is $\Gamma$-distinguished if and only if  $(A,0)$ is $\Gamma$-distinguished.
	\end{thm}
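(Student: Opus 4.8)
The plan is to reduce the hyponormal case to the normal case already settled in Theorem \ref{7.11}. The converse implication needs no extra hypothesis at all: if $(T_A, V_0)$ is $\Gamma$-distinguished, then Proposition \ref{7.6}(1) (equivalently, Remark \ref{7.2} together with Lemma \ref{lem1014} applied to the block form of $(D,E)$) shows that the $\Gamma$-distinguished polynomial annihilating $(T_A, V_0)$ also annihilates $(A,0)$, so $(A,0)$ is $\Gamma$-distinguished. Hence the whole content of the theorem lies in the forward direction, and there I would first establish that $A$ is in fact \emph{normal}.

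So suppose $(A,0)$ is $\Gamma$-distinguished. Recall that $(A,0)$ is a $\Gamma$-contraction because $\omega(A)\le 1$ for the fundamental operator $A$ (here $D_0=I$ in Theorem \ref{thm_201}). By Remark \ref{rem6.7}, the spectrum $\sigma(A)$ is then a finite subset of $\mathbb{D}$; in particular it has planar Lebesgue measure zero. Since $A$ is hyponormal, Putnam's inequality
\[
\|A^*A - AA^*\| \leq \frac{1}{\pi}\,\mathrm{Area}(\sigma(A)) = 0
\]
forces $A^*A = AA^*$, that is, $A$ is normal. Now the hypotheses of Theorem \ref{7.11} are satisfied: $(S,P)$ is a $\Gamma$-distinguished $\Gamma$-contraction, its fundamental operator $A$ is normal, and $(A,0)$ is $\Gamma$-distinguished. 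Theorem \ref{7.11} therefore yields that $(T_A, V_0)$ is $\Gamma$-distinguished, completing the proof.

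The only genuine step is the passage from ``hyponormal'' to ``normal'', and the main obstacle is essentially to notice that the finiteness of $\sigma(A)$ guaranteed by Remark \ref{rem6.7} is precisely what makes Putnam's inequality collapse. If one preferred a route that does not invoke Putnam's inequality, one could instead argue directly: $A$ is algebraic by the reasoning in Proposition \ref{7.6}, so it decomposes along the Riesz projections attached to its finitely many spectral points; each Riesz subspace is $A$-invariant, the restriction of a hyponormal operator to an invariant subspace is again hyponormal, and a hyponormal nilpotent must vanish, so $A$ acts as a scalar on each such subspace, and checking that the Riesz projections are mutually orthogonal (as holds for hyponormal operators) again gives normality. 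Either way, once normality of $A$ is secured the conclusion follows verbatim from Theorem \ref{7.11}.
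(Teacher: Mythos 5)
Your proof is correct and follows essentially the same strategy as the paper: both directions are handled the same way, with the converse coming for free from Proposition \ref{7.6} (equivalently Remark \ref{7.2}), and the forward direction reduced to Theorem \ref{7.11} by showing that $A$ is in fact normal. The only difference is the tool used for the normality step: the paper quotes Corollary 2 of Stampfli \cite{Stampfli} (a hyponormal operator annihilated by a polynomial is normal), whereas you deduce from Remark \ref{rem6.7} that $\sigma(A)$ is a finite subset of $\mathbb{D}$ and then apply Putnam's inequality $\|A^*A-AA^*\|\leq \pi^{-1}\,\mathrm{Area}(\sigma(A))=0$; both are legitimate, and your route has the minor virtue of making explicit where the hypothesis that $(A,0)$ is $\Gamma$-distinguished enters.
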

		\begin{proof} A hyponormal operator annihilated by a polynomial is normal. This fact is an easy consequence of Corollary 2 in \cite{Stampfli}. The desired conclusion follows from Theorem \ref{7.11}. 
	\end{proof}
	
		We conclude this section with the following corollary. 
	
		\begin{cor}
		Let $(S,P)$ be a normal $\Gamma$-distinguished $\Gamma$-contraction with $\|P\|<1$ and let $A$ be its fundamental operator. Then $(T_A, V_0)$ is $\Gamma$-distinguished if and only if $(A,0)$ is $\Gamma$-distinguished.
	\end{cor}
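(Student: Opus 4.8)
The plan is to reduce the claim to Theorem \ref{7.11} by showing that the normality of the pair $(S,P)$ forces the fundamental operator $A$ to be normal, after which the stated equivalence is already contained in that theorem.

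First I would exploit the hypothesis $\|P\|<1$. It gives $I-P^*P \ge (1-\|P\|^2)I > 0$, so $D_P=(I-P^*P)^{1\slash 2}$ is invertible, $\mathcal{D}_P=\mathcal{H}$, and the fundamental equation $S-S^*P=D_PAD_P$ from Theorem \ref{thm_201} is solved explicitly by $A=D_P^{-2}(S-S^*P)$. Moreover $D_P^{-2}=(I-P^*P)^{-1}$ is a continuous function of the self-adjoint operator $P^*P$ (the function $t\mapsto(1-t)^{-1}$ is continuous on $\sigma(P^*P)\subseteq[0,\|P\|^2]\subset[0,1)$), hence it lies in the unital $C^*$-algebra $\mathcal{A}$ generated by $S$ and $P$; consequently $A=D_P^{-2}(S-S^*P)\in\mathcal{A}$.

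Next I would show that $\mathcal{A}$ is commutative. Being a $\Gamma$-contraction, $(S,P)$ is a commuting pair; since both $S$ and $P$ are normal, Fuglede's theorem yields that $S$ commutes with $P^*$ and $P$ commutes with $S^*$, while taking adjoints in $SP=PS$ gives that $S^*$ commutes with $P^*$. Thus $\{S,S^*,P,P^*\}$ is a mutually commuting family and $\mathcal{A}$ is a commutative $C^*$-algebra, in which every element is normal. In particular $A$ is normal, so Theorem \ref{7.11} applies and yields that $(T_A,V_0)$ is $\Gamma$-distinguished if and only if $(A,0)$ is $\Gamma$-distinguished.

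I do not expect a serious obstacle here: the only point that needs care is the passage, via Fuglede's theorem, from the commutativity of $(S,P)$ to the double commutativity of $\{S,S^*,P,P^*\}$, and after that the argument is purely formal. If one prefers to avoid the $C^*$-algebraic language, the same conclusion can be obtained by a direct computation of $A^*A-AA^*$ using the pairwise commutativity, but the algebra argument is shorter and more transparent.
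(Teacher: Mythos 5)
Your proposal is correct and follows essentially the same route as the paper: invertibility of $D_P$ gives the explicit formula for $A$, normality of $S$ and $P$ gives normality of $A$, and then Theorem \ref{7.11} (the paper invokes the hyponormal version, Theorem \ref{7.12}, which is equivalent here) finishes the argument. The only difference is that you spell out, via Fuglede's theorem and the commutative $C^*$-algebra generated by $S,S^*,P,P^*$, the justification for the normality of $A$ that the paper dismisses as ``evident'' --- a worthwhile addition, since that step does require Fuglede and is not purely formal.
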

	
		\begin{proof}
		The defect operator $D_P$ is invertible since $\|P\|<1$. Hence, the fundamental operator $A$ of $(S, P)$ is given by $A=D_P^{-1}(S-S^*P)D_P^{-1}$. Evidently, $A$ is normal as $S$ and $P$ are normal operators. The desired conclusion now follows from Theorem \ref{7.12}.
	\end{proof}

		\section{Decomposition of $\Gamma$-unitaries and pure $\Gamma$-isometries  annihilated by distinguished  polynomials}\label{sec_decomp}
	
	\vspace{0.1cm}
	
\noindent Every $\Gamma$-isometry admits a Wold type decomposition, e.g., see \cite{AglerII16}. Indeed, if $(T,V)$ is a $\Gamma$-isometry acting on a Hilbert space $\mathcal{H}$, then there is an orthogonal decomposition of $\mathcal{H}$ into closed joint reducing subspaces $\mathcal{H}_u$ and $\mathcal{H}_p$ such that$(T|_{\mathcal{H}_u}, V|_{\mathcal{H}_u})$ is a $\Gamma$-unitary and $(T|_{\mathcal{H}_p}, V|_{\mathcal{H}_p})$ is a pure $\Gamma$-isometry. It follows that if $(T, V)$ is $\Gamma$-distinguished, then so are $(T|_{\mathcal{H}_u}, V|_{\mathcal{H}_u})$ and $(T|_{\mathcal{H}_p}, V|_{\mathcal{H}_p})$. Naturally, the polynomial that annihilates $(T, V)$ also annihilates these two pairs. In other words, we have the following.
	
		\begin{lem} 
		
		Let $(T, V)$ be a $\Gamma$-distinguished $\Gamma$-isometry on $\mathcal{H}$. Then  there is an orthogonal decomposition of $\mathcal{H}$ into closed joint reducing subspaces $\mathcal{H}_u$ and $\mathcal{H}_p$ such that $(T|_{\mathcal{H}_u}, V|_{\mathcal{H}_u})$ is a $\Gamma$-distinguished $\Gamma$-unitary and  $(T|_{\mathcal{H}_p}, V|_{\mathcal{H}_p})$ is a $\Gamma$-distinguished pure $\Gamma$-isometry.
	\end{lem}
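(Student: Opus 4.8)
The plan is to deduce this directly from the Wold-type decomposition for $\Gamma$-isometries recorded in \cite{AglerII16} together with the elementary fact that polynomial functional calculus is compatible with restriction to a joint reducing subspace. First I would invoke the decomposition: since $(T,V)$ is a $\Gamma$-isometry on $\mathcal{H}$, there exist closed subspaces $\mathcal{H}_u$ and $\mathcal{H}_p$ of $\mathcal{H}$, each jointly reducing for the pair $(T,V)$, with $\mathcal{H}=\mathcal{H}_u\oplus\mathcal{H}_p$, such that $(T|_{\mathcal{H}_u},V|_{\mathcal{H}_u})$ is a $\Gamma$-unitary and $(T|_{\mathcal{H}_p},V|_{\mathcal{H}_p})$ is a pure $\Gamma$-isometry.

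Next, because $(T,V)$ is $\Gamma$-distinguished, I would fix a $\Gamma$-distinguished polynomial $q\in\mathbb{C}[z_1,z_2]$ with $q(T,V)=0$. For any subspace $\mathcal{M}$ that reduces both $T$ and $V$, every monomial $T^iV^j$ leaves $\mathcal{M}$ invariant and satisfies $(T^iV^j)|_{\mathcal{M}}=(T|_{\mathcal{M}})^i(V|_{\mathcal{M}})^j$, so that $q(T|_{\mathcal{M}},V|_{\mathcal{M}})=q(T,V)|_{\mathcal{M}}=0$. Applying this with $\mathcal{M}=\mathcal{H}_u$ and $\mathcal{M}=\mathcal{H}_p$ shows that the single $\Gamma$-distinguished polynomial $q$ annihilates both restricted pairs. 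Hence $(T|_{\mathcal{H}_u},V|_{\mathcal{H}_u})$ is a $\Gamma$-distinguished $\Gamma$-unitary and $(T|_{\mathcal{H}_p},V|_{\mathcal{H}_p})$ is a $\Gamma$-distinguished pure $\Gamma$-isometry, which is exactly the asserted conclusion.

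There is no substantive obstacle in this argument; the only point deserving a line of care is the identity $q(T|_{\mathcal{M}},V|_{\mathcal{M}})=q(T,V)|_{\mathcal{M}}$, which follows immediately once one observes that a reducing subspace is in particular invariant under $T$ and $V$. Thus the proof is essentially a formalization of the remark preceding the statement, and it will be short.
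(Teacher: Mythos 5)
Your argument is correct and is exactly the paper's: the lemma is presented there as an immediate consequence of the Wold-type decomposition for $\Gamma$-isometries from \cite{AglerII16} together with the observation that the annihilating $\Gamma$-distinguished polynomial restricts to each joint reducing summand. Nothing further is needed.
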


The preceding lemma shows that any $\Gamma$-distinguished $\Gamma$-isometry splits into a $\Gamma$-distinguished pure $\Gamma$-isometry and a $\Gamma$-distinguished $\Gamma$-unitary. We now go a step further and provide decomposition results for a subclass of such operator pairs, namely those which are annihilated by polynomials whose zero sets lie in $\G \cup b\Gamma \cup \pi(\mathbb{E}^2)$. This decomposition is motivated by Theorem 2.1 in \cite{AglerKneseMcCarthy2}, where it was proved that a commuting pair of pure isometries annihilated by a polynomial $q \in \C[z_1, z_2]$ with $Z(q) \subseteq \D^2 \cup \T^2 \cup \mathbb{E}^2$ can be expressed nearly as a direct sum corresponding to the irreducible factors of $q$.

\medskip Recall from \cite{AglerKneseMcCarthy2} that a polynomial $q \in \C[z_1, z_2]$ is said to be \textit{inner toral} if $Z(q) \subseteq \D^2 \cup \T^2 \cup \mathbb{E}^2$. To go parallel with this, we say that a polynomial $p \in \C[z_1, z_2]$ is \textit{distinguished} if  $Z(p) \subseteq \G \cup b\Gamma \cup \pi(\mathbb{E}^2)$, and a $\Gamma$-contraction $(S, P)$ is said to be \textit{distinguished} if it is annihilated by a distinguished polynomial. Evidently, multiplying a distinguished polynomial with the $\Gamma$-distinguished polynomial $p(z_1, z_2) = z_1$ gives a $\Gamma$-distinguished polynomial. Also, these two classes of polynomials are not same. For example, the polynomial $p(z_1, z_2) = z_1(z_2 - 1)$ is $\Gamma$-distinguished but not distinguished. Indeed, $(5/2, 1) \in Z(p)$, which can only arise as the symmetrization of the pairs $(2, 1/2)$ and $(1/2, 2)$. Since neither of these pairs belongs to $\D^2 \cup \T^2 \cup \mathbb{E}^2$, we have that $(5/2, 1) \notin \G \cup b\Gamma \cup \pi(\mathbb{E}^2)$. The following lemma clarifies the fact that the class of $\Gamma$-distinguished $\Gamma$-contractions is bigger than that of distinguished $\Gamma$-contractions.
	
\begin{lem}\label{5.2}
		Every distinguished $\Gamma$-contraction is $\Gamma$-distinguished. 
	\end{lem}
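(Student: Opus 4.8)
The plan is to follow the construction indicated in the paragraph preceding the lemma: given a distinguished polynomial $p$ annihilating $(S,P)$, multiply it by $z_1$ and show that the product is $\Gamma$-distinguished. So suppose $(S,P)$ acts on a Hilbert space $\mathcal H$ and $p(S,P)=0$ for a polynomial $p$ with $Z(p)\subseteq\G\cup b\Gamma\cup\pi(\mathbb E^2)$. If $\mathcal H=\{0\}$ the assertion is vacuous, so assume $\mathcal H\neq\{0\}$, which forces $p$ to be non-constant. Put $q(z_1,z_2)=z_1\,p(z_1,z_2)$. Since $S$ and $P$ commute and $p(S,P)=0$, we get $q(S,P)=S\,p(S,P)=0$; hence it remains only to show that $q$ is a $\Gamma$-distinguished polynomial.

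Checking $Z(q)\cap\G\neq\emptyset$ is immediate: $(0,0)=\pi(0,0)\in\G$ and $(0,0)\in Z(z_1)\subseteq Z(q)$. The real content is the equality $Z(q)\cap\partial\Gamma=Z(q)\cap b\Gamma$; as $b\Gamma\subseteq\partial\Gamma$, only the inclusion $Z(q)\cap\partial\Gamma\subseteq b\Gamma$ requires argument. I would use the factorization $Z(q)=Z(z_1)\cup Z(p)$ and treat the two pieces separately.

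For the component $Z(z_1)$: a point of $Z(z_1)\cap\partial\Gamma$ has the form $(0,w)$ with $(0,w)\in\Gamma$, so $(0,w)=\pi(\alpha,-\alpha)$ for some $|\alpha|\le 1$ with $w=-\alpha^2$. If $|\alpha|<1$ then $(0,w)=\pi(\alpha,-\alpha)\in\G$, contradicting $(0,w)\in\partial\Gamma=\Gamma\setminus\G$; hence $|\alpha|=1$ and $(0,w)\in\pi(\mathbb T^2)=b\Gamma$. For the component $Z(p)$: the key observation is that $\pi(\mathbb E^2)\cap\Gamma=\emptyset$, because each point of $\pi(\mathbb E^2)$ has second coordinate of modulus strictly greater than $1$, whereas every point of $\Gamma=\{(z_1+z_2,z_1z_2):|z_1|\le1,|z_2|\le1\}$ has second coordinate of modulus at most $1$. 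Combined with the hypothesis $Z(p)\subseteq\G\cup b\Gamma\cup\pi(\mathbb E^2)$ and with $\partial\Gamma\cap\G=\emptyset$, this yields $Z(p)\cap\partial\Gamma\subseteq b\Gamma$. Putting the two cases together gives $Z(q)\cap\partial\Gamma\subseteq b\Gamma$, so $q$ is a $\Gamma$-distinguished polynomial annihilating $(S,P)$, and therefore $(S,P)$ is $\Gamma$-distinguished.

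All the computations involved are elementary; the only point needing a moment's thought — and the conceptual reason the lemma holds — is the disjointness $\pi(\mathbb E^2)\cap\Gamma=\emptyset$, together with the fact that the spurious component $Z(z_1)$ introduced by multiplying by $z_1$ meets $\partial\Gamma$ only inside $b\Gamma$. Once these are noted, the boundary condition in the definition of a $\Gamma$-distinguished polynomial is automatic for $q$, and no genuine obstacle remains.
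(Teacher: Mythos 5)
Your proof is correct and follows the same route as the paper: multiply the distinguished annihilating polynomial by $z_1$, note $(0,0)$ lies in the zero set intersected with $\G$, and check the boundary condition on the two components $Z(z_1)$ and $Z(p)$ separately. You merely spell out the details (that $z_1$ is $\Gamma$-distinguished and that $\pi(\mathbb E^2)\cap\Gamma=\emptyset$) which the paper's proof asserts without verification.
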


	\begin{proof}
		Let $(S, P)$ be a $\Gamma$-contraction acting on a Hilbert space $\HS$ and let $q \in \C[z_1, z_2]$ be a distinguished polynomial that annihilates $(S, P)$. Let us define $p(z_1, z_2)=z_1q(z_1, z_2)$. Evidently, $(0, 0) \in Z(p) \cap \G$. Since $q(z_1, z_2)$ and $g(z_1, z_2)=z_1$ are distinguished and $\Gamma$-distinguished polynomials respectively, we have that $Z(p) \cap \partial \Gamma= Z(p) \cap b\Gamma$. The proof is now complete.   	 
	\end{proof}

A distinguished pure $\Gamma$-isometry has a square-free minimal annihilating polynomial as shown below, which follows from Theorem 3.16 in \cite{Pal_Tomar}.
	
	\begin{lem}\label{5.4}
		Let $(S,P)$ be a distinguished pure $\Gamma$-isometry. Then there is a square-free distinguished polynomial $q$ that annihilates $(S,P) $. Moreover, if $p$ is any polynomial that annihilates $(S,P)$ then $q$ divides $p$.
	\end{lem}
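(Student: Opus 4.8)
The plan is to reduce the statement to two facts about the ideal
\[
\mathcal{I}=\{p\in\mathbb{C}[z_1,z_2]:p(S,P)=0\}
\]
of polynomials annihilating $(S,P)$: that $\mathcal{I}$ is a \emph{radical} ideal, and that the variety $V(\mathcal{I})$ has no zero-dimensional component. The first of these is exactly where Theorem~3.16 of \cite{Pal_Tomar} enters. Since $(S,P)$ is a distinguished pure $\Gamma$-isometry, that theorem supplies enough structure — a model of $(S,P)$ by coordinate multiplications on a space of analytic functions on a distinguished variety, equivalently a description of the annihilating polynomials as exactly those vanishing on that variety — to conclude that $\mathcal{I}=I(V(\mathcal{I}))$ is radical. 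Granting this, pick any distinguished polynomial $p_0$ with $p_0(S,P)=0$, which exists because $(S,P)$ is in particular a distinguished $\Gamma$-contraction. Its square-free part $q_0$ lies in $\sqrt{(p_0)}\subseteq\sqrt{\mathcal{I}}=\mathcal{I}$, so $q_0$ already annihilates $(S,P)$; moreover $Z(q_0)=Z(p_0)\subseteq\mathbb{G}_2\cup b\Gamma\cup\pi(\mathbb{E}^2)$, so $q_0$ is again distinguished. Thus $\mathcal{I}$ contains a square-free distinguished polynomial.

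Next I would rule out zero-dimensional components of $V(\mathcal{I})$, using purity of $P$. The ideal $\mathcal{I}$ is nonzero and proper (it contains the non-constant $p_0$ but not $1$), so $V(\mathcal{I})$ is a nonempty proper subvariety of $\mathbb{C}^2$ contained in the curve $Z(p_0)$; hence each of its irreducible components is either a curve or an isolated point. Suppose $P_0=(\lambda_1,\lambda_2)$ were such an isolated point, and write $V(\mathcal{I})=\{P_0\}\sqcup Y$ with $Y$ a subvariety not containing $P_0$. By radicality, $\mathcal{I}=\mathfrak{m}_{P_0}\cap I(Y)$, and since $P_0\notin Y$ the ideals $\mathfrak{m}_{P_0}$ and $I(Y)$ are comaximal; choose $a\in\mathfrak{m}_{P_0}$ and $b\in I(Y)$ with $a+b=1$. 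Then $b^2-b=-ab\in\mathfrak{m}_{P_0}\cap I(Y)=\mathcal{I}$ and $(z_2-\lambda_2)\,b\in\mathfrak{m}_{P_0}\cap I(Y)=\mathcal{I}$, so $E:=b(S,P)$ is idempotent, $(P-\lambda_2)E=0$, and $E\ne0$ since $b\notin\mathcal{I}$ (indeed $b(P_0)=1$). Hence $\lambda_2$ is an eigenvalue of $P$, which is impossible: in a pure $\Gamma$-isometry $P$ is a pure isometry, i.e. a unilateral shift, and shifts have no eigenvalues. Therefore $V(\mathcal{I})$ is a nonempty plane curve of pure dimension one.

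With radicality and pure one-dimensionality established, the rest is unique factorization in $\mathbb{C}[z_1,z_2]$. Write $V(\mathcal{I})=C_1\cup\cdots\cup C_k$ with the $C_j$ distinct irreducible curves; each $I(C_j)$ is a height-one prime of the unique factorization domain $\mathbb{C}[z_1,z_2]$, hence principal, say $I(C_j)=(f_j)$ with $f_j$ irreducible and the $f_j$ pairwise non-associate. Then
\[
\mathcal{I}=I(V(\mathcal{I}))=\bigcap_{j=1}^{k}(f_j)=(f_1\cdots f_k),
\]
so $q:=f_1\cdots f_k$ is a square-free generator of $\mathcal{I}$. By construction $q$ annihilates $(S,P)$, it divides every polynomial annihilating $(S,P)$ (being a generator of $\mathcal{I}$), it is unique up to a scalar, and $Z(q)=V(\mathcal{I})\subseteq Z(p_0)\subseteq\mathbb{G}_2\cup b\Gamma\cup\pi(\mathbb{E}^2)$ shows it is distinguished; in fact $q\mid p_0$, so it is even a square-free divisor of the original distinguished polynomial. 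This $q$ is the required minimal annihilating polynomial.

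The main obstacle is the very first step. For a general commuting algebraic pair there is no reason for $\mathcal{I}$ to be radical — the square-free part of an annihilating polynomial need not annihilate, because of possible nilpotent phenomena in the algebra generated by $S$ and $P$. The hypothesis that $(S,P)$ is a \emph{distinguished} pure $\Gamma$-isometry, exploited through Theorem~3.16 of \cite{Pal_Tomar}, is precisely what rules this out; everything afterwards is routine manipulation of ideals in $\mathbb{C}[z_1,z_2]$ together with the absence of eigenvalues for shifts.
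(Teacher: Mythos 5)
Your proposal is correct, and it ultimately rests on the same external input as the paper's proof, but it takes a noticeably longer route around it. The paper simply quotes Theorem 3.16 of \cite{Pal_Tomar}, which for an algebraic pure $\Gamma$-isometry already delivers a \emph{square-free} minimal polynomial $q$ dividing every annihilating polynomial; the only remaining step is the observation you also make at the end, namely that $q$ divides a distinguished annihilator $f$, hence $Z(q)\subseteq Z(f)\subseteq \G\cup b\Gamma\cup\pi(\mathbb{E}^2)$. You instead extract from that theorem only the radicality of the annihilating ideal $\mathcal{I}$ and then reconstruct the rest yourself: the idempotent argument showing that an isolated point of $V(\mathcal{I})$ would force $P$ to have an eigenvalue (impossible for a pure isometry, i.e.\ a shift) is correct and is a genuinely operator-theoretic supplement the paper does not need, and the passage from a radical, purely one-dimensional ideal to a principal ideal with square-free generator via unique factorization is also correct. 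What your route buys is a more self-contained proof that isolates exactly where purity of $P$ enters; what it costs is that the extra machinery is redundant, since the cited theorem already gives $\mathcal{I}=(q)$ with $q$ square-free (which in particular implies the radicality you assume). The one point to be aware of is that your justification of radicality is a guess at the content of Theorem 3.16; had that theorem only guaranteed a minimal annihilating polynomial without square-freeness, your first step would be unsupported. As the theorem is actually used in the paper, your appeal is legitimate, so there is no genuine gap.
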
 
	
	\begin{proof}
		Since $(S, P)$ is algebraic, Theorem 3.16 in \cite{Pal_Tomar} ensures the existence of a square-free minimal polynomial $q$ annihilating $(S,P)$. Let $f$ be a distinguished polynomial such that $f(S,P)=0$. Then $q$ divides $f$ and thus, $ Z(q) \subseteq Z(f) \subseteq \G \cup b\Gamma \cup \pi(\mathbb{E}^2)$, which completes the proof.
	\end{proof}
	
We now turn to the first part of our decomposition theorems, focusing on distinguished $\Gamma$-unitaries. To do so, we need the following result, which will be used throughout this section.

	\begin{lem}\label{8.1}
		Let $(T,U)$ be a $\Gamma$-unitary acting on a Hilbert space $\mathcal{K}$ annihilated by a distinguished polynomial $q$. Let $q_1$ and $q_2$ be distinct factors of $q$ such that $q=q_1q_2$. Then
		\[
		q_1(T,U)\mathcal{K} \perp q_2(T,U)\mathcal{K}.
		\]
	\end{lem}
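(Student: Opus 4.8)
The plan is to exploit the structure of a $\Gamma$-unitary, namely that $(T,U)$ is a commuting pair of normal operators with Taylor joint spectrum inside $b\Gamma$. First I would invoke the spectral theorem for the commuting normal tuple $(T,U)$ to obtain a projection-valued measure $E$ on $b\Gamma$ such that $T=\int_{b\Gamma} s\,dE(s,p)$ and $U=\int_{b\Gamma} p\,dE(s,p)$, and more generally $f(T,U)=\int_{b\Gamma} f\,dE$ for every polynomial (indeed every continuous) $f$. Then for any vectors $x,y\in\mathcal K$,
\[
\langle q_1(T,U)x,\; q_2(T,U)y\rangle=\int_{b\Gamma} q_1(s,p)\,\overline{q_2(s,p)}\;d\langle E(s,p)x,y\rangle .
\]
So it suffices to show that the integrand vanishes $E$-a.e., which amounts to showing that the closed set $Z(q_1)\cap Z(q_2)$ has $E$-measure zero, i.e. that $E\big(Z(q_1)\cap Z(q_2)\cap b\Gamma\big)=0$.

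The next step is to analyze the set $Z(q_1)\cap Z(q_2)\cap b\Gamma$. Since $q=q_1q_2$ annihilates $(T,U)$, the joint spectrum of $(T,U)$ is contained in $Z(q)\cap b\Gamma$; in particular $E$ is supported on $Z(q)\cap b\Gamma$. Now $q$ is a distinguished polynomial, so $Z(q)\subseteq \mathbb G_2\cup b\Gamma\cup\pi(\mathbb E^2)$, and hence $Z(q)\cap b\Gamma\subseteq b\Gamma$ is the relevant support. The key geometric claim is that $Z(q_1)\cap Z(q_2)\cap b\Gamma$ is a \emph{finite} set (equivalently, has no interior in $b\Gamma$ and in fact is $0$-dimensional). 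This should follow because each $q_i$ is (up to the $z_1$-factor business, but that factor does not vanish on $b\Gamma$ except at isolated slices) a $\Gamma$-distinguished-type polynomial whose zero set meets $b\Gamma$ in a one-real-dimensional curve; two distinct such curves, coming from coprime factors $q_1,q_2$, can intersect in only finitely many points. Concretely, pulling back through the symmetrization map $\pi:\mathbb T^2\to b\Gamma$, the sets $\pi^{-1}(Z(q_i))\cap\mathbb T^2$ are real-algebraic curves in the $2$-torus, and $\pi^{-1}(Z(q_1))\cap\pi^{-1}(Z(q_2))\cap\mathbb T^2$ is the common zero set of two real-analytic functions that are not proportional, hence finite by a dimension/Bézout argument (or by the results of \cite{PalShalit1, Pal_Tomar} describing distinguished varieties in $\mathbb G_2$).

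Once $Z(q_1)\cap Z(q_2)\cap b\Gamma$ is known to be finite, I would finish by showing $E$ assigns zero mass to each point of this set. If $(s_0,p_0)$ were an atom of $E$, then the corresponding spectral subspace would reduce $(T,U)$ to the scalar pair $(s_0 I, p_0 I)$; but then every polynomial annihilating $(T,U)$ on that subspace must vanish at $(s_0,p_0)$ together with its behavior forced by $(s_0,p_0)$ lying on $b\Gamma$ — and crucially $q_1q_2$ has a zero of order $\ge 2$ there, while the minimal (square-free) annihilating polynomial of a distinguished pure part cannot, by Lemma \ref{5.4}-type square-freeness. More directly: for a $\Gamma$-unitary, being annihilated by $q$ with $q$ square-free on $b\Gamma$ forces the spectral measure to have no atoms on the ramification locus; alternatively one simply notes that the $\Gamma$-unitary $(T,U)$ is also annihilated by the square-free radical of $q$, and at a point where $q_1$ and $q_2$ both vanish the radical still vanishes to order one, so the product $q_1\overline{q_2}$ vanishes there, giving the needed orthogonality even without atom-freeness. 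I expect the main obstacle to be the geometric finiteness claim — carefully ruling out that $Z(q_1)$ and $Z(q_2)$ share a common one-dimensional component inside $b\Gamma$ when $q_1\ne q_2$ — which I would handle by reducing to coprimality of $q_1$ and $q_2$ as polynomials (distinct \emph{prime} factors, after passing to an irreducible factorization) and invoking that a distinguished polynomial's restriction to $b\Gamma$ determines it, so distinct irreducible distinguished factors have distinct zero curves on $b\Gamma$.
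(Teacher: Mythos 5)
Your spectral-measure strategy is genuinely different from the paper's argument and can be made to work, but as written it contains a logical error at the crucial reduction, and the bulk of the proposal is spent on a claim that is both unnecessary and false under the stated hypotheses. The identity $\langle q_1(T,U)x,\, q_2(T,U)y\rangle=\int_{b\Gamma} q_1\overline{q_2}\,d\langle Ex,y\rangle$ is fine, but the integrand $q_1\overline{q_2}$ vanishes at a point as soon as \emph{one} of $q_1,q_2$ vanishes there, i.e.\ on the \emph{union} $Z(q_1)\cup Z(q_2)=Z(q)$, not merely off the intersection $Z(q_1)\cap Z(q_2)$. Since $q(T,U)=0$ forces $\operatorname{supp}E=\sigma_T(T,U)\subseteq Z(q)$ by the spectral mapping theorem, the integrand already vanishes identically on the support of $E$, and the integral is zero with no further work. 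Your reduction to showing $E\bigl(Z(q_1)\cap Z(q_2)\cap b\Gamma\bigr)=0$ is therefore backwards, and the ``key geometric claim'' that $Z(q_1)\cap Z(q_2)\cap b\Gamma$ is finite --- which you flag as the main obstacle --- is not only unneeded but false in general: the lemma assumes only that $q_1,q_2$ are distinct factors with $q=q_1q_2$, not that they are coprime (take $q=f^2g$, $q_1=f$, $q_2=fg$; then $Z(q_1)\cap Z(q_2)$ contains the entire curve $Z(f)$). Your closing ``alternatively'' sentence does contain the correct observation that $q_1\overline{q_2}$ vanishes even where both factors vanish, but it appears as an aside rather than as the argument.

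For comparison, the paper argues algebraically rather than measure-theoretically: it lifts $(T,U)=\pi(U_1,U_2)$ to a commuting pair of unitaries $\mathcal{U}=(U_1,U_2)$, notes that $p_i=q_i\circ\pi$ is inner toral, and invokes the coefficient symmetry $\overline{a}_{ij}=a_{(n-i)(m-j)}$ of inner toral polynomials (Lemma 2.2 of \cite{AglerKneseMcCarthy2}) to obtain $p_1(\mathcal{U})^*=U_1^{*n}U_2^{*m}p_1(\mathcal{U})$; then $p_1(\mathcal{U})^*p_2(\mathcal{U})=U_1^{*n}U_2^{*m}p(\mathcal{U})=0$ and the orthogonality follows. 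That route uses the distinguished hypothesis essentially (through inner torality), whereas your spectral argument, once the erroneous reduction is deleted, proves the orthogonality for \emph{any} commuting normal pair annihilated by $q=q_1q_2$ --- a shorter and more general argument.
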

	
	\begin{proof}
		Since $(T,U)$ is a $\Gamma$-unitary on $\mathcal{K}$, there is a commuting pair $\mathcal{U}=(U_1,U_2)$ of unitaries on $\mathcal{K}$ such that $\pi(U_1,U_2)=(T,U)$. Since $q$ is a distinguished polynomial, the polynomial $p=q \circ \pi$ is inner toral and $p(\mathcal{U})=0$. Let $p_i=q_i\circ \pi$ for $i=1, 2$ so that $p=p_1p_2$. Since $p_1$ is also inner toral, we have that 
$			z_1^nz_2^m\overline{p_1(1\slash{\overline{z}_1}, 1\slash{\overline{z}_2})}=\alpha \; p_1(z_1, z_2),
$
		where $\alpha \in \mathbb{T}$ and $(n,m)$ is the degree of $p_1$. In fact, we may assume $\alpha=1$ by replacing $p_1$ with an appropriate constant multiple (see \cite{Knese9} and Lemma 2.2 in \cite{AglerKneseMcCarthy2} for further details). Consequently, 
		\begin{equation*}
			p_1(z_1, z_2)=\overset{n}{\underset{i=0}{\sum}}\overset{m}{\underset{j=0}{\sum}}a_{ij}z_1^iz_2^j
		\end{equation*}
		and  $\overline{a}_{ij}=a_{(n-i)(m-j)}$. This can be used to show that 
		$
		p_1(\mathcal{U})^*U_1^nU_2^m=p_1(\mathcal{U}),
		$
		as $U_1, U_2$ are commuting unitaries. Consequently, 
		$p_1(\mathcal{U})^*p_2(\mathcal{U})=U_1^{*n}U_2^{*m}p_1(\mathcal{U})p_2(\mathcal{U})=U_1^{*n}U_2^{*m}p(\mathcal{U})=0$. Thus,		$
			\langle q_1(T,U)h_1, q_2(T,U)h_2\rangle = \langle p_1(\mathcal{U})h_1, p_2(\mathcal{U})h_2 \rangle = \langle h_1, p_1(\mathcal{U})^*p_2(\mathcal{U})h_2 \rangle = 0$ for every $h_1, h_2 \in \mathcal{K}$.
		Thus, we have proved that $q_1(T,U)\mathcal{K} \perp q_2(T,U)\mathcal{K}$.
	\end{proof}
	
We are now in a position to present our decomposition theorem for a $\Gamma$-unitary annihilated by a distinguished polynomial $q \in \C[z_1, z_2]$. To explain the idea of the proof, we first prove the decomposition result for $q=q_1q_2$, where $q_1, q_2$ are distinct factors of $q$. 

\smallskip 

For the sake of brevity, we fix the following notations for the rest of this section:  A \textit{$q$-$\Gamma$-contraction} means a $\Gamma$-contraction annihilated by a polynomial $q(z_1, z_2)$. For a commuting pair $\Sigma=(T_1, T_2)$ acting on a Hilbert space $\mathcal{H}$ and a polynomial $q$, let $q(\Sigma)$ and $q(\Sigma)^*$ denote the operators $q(T_1, T_2)$ and $(q(T_1, T_2))^*$ respectively. For a closed joint invariant subspace $\mathcal{L} \subseteq \mathcal{H}$ of $T_1, T_2$, we denote $(T_1|_{\mathcal{L}}, T_2|_{\mathcal{L}})$ by $\Sigma|_{\mathcal{L}}$.

	\begin{prop}\label{8.2}
		
		Let $\Sigma=(T,U)$ be a $\Gamma$-unitary on $\mathcal{K}$ annihilated by a distinguished polynomial $q$ and let $q_1, q_2$ be distinct factors of $q$ such that $q=q_1q_2$. Then there exist closed joint reducing subspaces $\mathcal{K}_1,\mathcal{K}_2$ of $\mathcal{K}$ such that 
		\begin{enumerate}
			\item $\mathcal{K}=\mathcal{K}_1 \oplus \mathcal{K}_2$ and
			\item each $\Sigma|_{\mathcal{K}_j}$ is a $\Gamma$-unitary annihilated by $q_j$ for $j=1,2$.
		\end{enumerate}
		
	\end{prop}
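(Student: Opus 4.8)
The plan is to produce the reducing subspaces $\mathcal{K}_1, \mathcal{K}_2$ as the closures of the ranges of $q_2(\Sigma)$ and $q_1(\Sigma)$ respectively, and then to verify that these are orthogonal, that they are joint reducing for $(T,U)$, that they span $\mathcal{K}$, and finally that the restriction of $\Sigma$ to each piece is a $\Gamma$-unitary annihilated by the appropriate factor. The key input is Lemma \ref{8.1}, which gives $q_1(\Sigma)\mathcal{K} \perp q_2(\Sigma)\mathcal{K}$; this immediately forces orthogonality of $\mathcal{K}_1 := \overline{q_2(\Sigma)\mathcal{K}}$ and $\mathcal{K}_2 := \overline{q_1(\Sigma)\mathcal{K}}$.

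First I would establish that $\mathcal{K} = \mathcal{K}_1 \oplus \mathcal{K}_2$. Since $q = q_1 q_2$ annihilates $\Sigma$ and $q_1, q_2$ are coprime factors (they are distinct irreducible-type factors, hence have no common zero on the relevant set — more precisely, one should argue via Bézout over $\C[z_1,z_2]$ after localizing, or simply note that the model $(U_1,U_2)$ of unitaries from Lemma \ref{8.1} lets us reduce to a commutative $C^*$-algebra where $q_1(\mathcal{U}), q_2(\mathcal{U})$ have disjoint zero sets on the joint spectrum). In that commutative picture, $\ker q_1(\mathcal{U})$ and $\ker q_2(\mathcal{U})$ are complementary reducing subspaces, and one checks $\overline{q_2(\mathcal{U})\mathcal{K}} = \ker q_1(\mathcal{U})$ and $\overline{q_1(\mathcal{U})\mathcal{K}} = \ker q_2(\mathcal{U})$: the inclusion $\overline{q_2(\mathcal{U})\mathcal{K}} \subseteq \ker q_1(\mathcal{U})$ is clear from $q_1 q_2 = q$ annihilating $\Sigma$, while the reverse follows because on the joint spectrum $q_2$ is bounded away from zero on the zero set of $q_1$. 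Since $q_1(\mathcal{U}) = q_1(T,U)$ and $q_2(\mathcal{U}) = q_2(T,U)$ commute with both $T$ and $U$, the closures of their ranges are joint reducing subspaces; combined with orthogonality and the spectral complementarity this yields $\mathcal{K} = \mathcal{K}_1 \oplus \mathcal{K}_2$ as an orthogonal direct sum of reducing subspaces.

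It then remains to identify $\Sigma|_{\mathcal{K}_j}$. Since $\mathcal{K}_1 = \ker q_1(\Sigma)$, the restriction $\Sigma|_{\mathcal{K}_1}$ is annihilated by $q_1$; likewise $\Sigma|_{\mathcal{K}_2}$ is annihilated by $q_2$. Moreover, the restriction of a $\Gamma$-unitary to a joint reducing subspace is again a $\Gamma$-unitary: normality of $T, U$ is inherited by restrictions to reducing subspaces, commutativity is automatic, and the Taylor joint spectrum of the restriction is contained in that of $\Sigma$, hence in $b\Gamma$. This gives conclusion (2). I expect the main obstacle to be the careful justification that $\overline{q_2(\Sigma)\mathcal{K}}$ coincides with $\ker q_1(\Sigma)$ — that is, the spectral-theoretic argument that $q_2$ is invertible on the spectral support of $q_1(\mathcal{U})$; this requires knowing that the zero sets of $q_1$ and $q_2$ meet the relevant part of the torus $\mathbb{T}^2$ (where $\sigma_T(U_1,U_2)$ lives) in a controlled way, and one should invoke that distinguished (inner toral) polynomials have at most finitely many common zeros on $\mathbb{T}^2$ unless they share a factor, which is excluded since $q_1, q_2$ are distinct irreducible-type factors of the square-free-on-$\mathbb{T}^2$ part of $q$. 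Everything else is routine operator-algebra bookkeeping.
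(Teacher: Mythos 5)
Your construction of the two candidate subspaces $\mathcal{K}_1=\overline{q_2(\Sigma)\mathcal{K}}$ and $\mathcal{K}_2=\overline{q_1(\Sigma)\mathcal{K}}$, the appeal to Lemma \ref{8.1} for their orthogonality, and the observation that restrictions of a $\Gamma$-unitary to joint reducing subspaces are again $\Gamma$-unitaries annihilated by the corresponding factors, all match the paper. The genuine gap is in your claim that these two subspaces exhaust $\mathcal{K}$, which you try to force by identifying $\overline{q_2(\mathcal{U})\mathcal{K}}$ with $\ker q_1(\mathcal{U})$ and declaring $\ker q_1(\mathcal{U})$, $\ker q_2(\mathcal{U})$ complementary. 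This fails for two reasons. First, the hypothesis is only that $q_1,q_2$ are \emph{distinct} factors with $q=q_1q_2$; they need not be coprime (e.g.\ $q=f^3$, $q_1=f$, $q_2=f^2$), so their zero sets on the joint spectrum need not be disjoint and the two kernels can overlap badly. Second, even for coprime irreducible factors, two inner toral polynomials can share finitely many common zeros on $\mathbb{T}^2$, and nothing prevents the joint spectral measure of $(U_1,U_2)$ from having atoms there; any such atom lies in $\ker q_1(\mathcal{U})\cap\ker q_2(\mathcal{U})$ and is orthogonal to \emph{both} range closures, so ``$q_2$ is bounded away from zero on the zero set of $q_1$ inside the spectrum'' is simply not available, and ``finitely many common zeros'' does not repair it.

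The paper avoids this entirely: it writes $\mathcal{K}=\overline{q_2(\Sigma)\mathcal{K}}\oplus\overline{q_1(\Sigma)\mathcal{K}}\oplus\mathcal{L}'$ with
\[
\mathcal{L}'=\bigl(\mathrm{Ran}\,q_1(\Sigma)\oplus\mathrm{Ran}\,q_2(\Sigma)\bigr)^{\perp}=\ker q_1(\Sigma)^*\cap\ker q_2(\Sigma)^*=\ker q_1(\Sigma)\cap\ker q_2(\Sigma),
\]
the last equality by normality of $q_j(\Sigma)$ (established via the inner-toral reflection identity $q_1(\Sigma)^*=U_1^{*j}U_2^{*k}q_1(\Sigma)$), and then simply absorbs the residual piece into one summand, setting $\mathcal{K}_1=\overline{q_2(\Sigma)\mathcal{K}}\oplus\mathcal{L}'$; this works because both $q_1$ and $q_2$ annihilate $\Sigma|_{\mathcal{L}'}$, so no spectral disjointness is needed. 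Note also that the annihilation statements for $\Sigma$ restricted to the range closures require no coprimality: $q_1(\Sigma)q_2(\Sigma)=q(\Sigma)=0$ already gives $q_1(\Sigma)=0$ on $\overline{q_2(\Sigma)\mathcal{K}}$. Your proof becomes correct once you replace the false exhaustion claim by this treatment of the residual subspace.
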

	
	\begin{proof}
		It follows from Lemma \ref{8.1} that 
		$
		q_1(\Sigma)\mathcal{K} \perp q_2(\Sigma)\mathcal{K}.
		$
		The space $\mathcal{L}_1=\overline{q_2(\Sigma)\mathcal{K}}$ is a closed joint reducing subspace of $\mathcal{K}$ and $\Sigma|_{\mathcal{L}_1}$ is a $q_1$- $\Gamma$-unitary. The space $		\mathcal{K}_2=\overline{q_1(\Sigma)\mathcal{K}}$ is orthogonal to $\mathcal{L}_1$ and
		is a joint reducing subspace too. Hence, $\Sigma|_{\mathcal{K}_2}$ is a $q_2$-$\Gamma$-unitary. Thus, we have the following orthogonal decomposition of $\mathcal{H}$ into the closed joint reducing subspaces: 
		\[
		\mathcal{K}=\mathcal{L}_1 \oplus \mathcal{K}_2 \oplus \mathcal{L}',
		\]
		where, $\mathcal{L}'=(\mbox{Ran}\;q_1(\Sigma)\oplus\mbox{Ran}\;q_2(\Sigma))^\perp=\mbox{Ker}\;q_1(\Sigma)^*\cap \mbox{Ker}\;q_2(\Sigma)^*=\mbox{Ker}\;q_1(\Sigma)\cap \mbox{Ker}\;q_2(\Sigma)$, where the last equality holds since $q_1(\Sigma), q_2(\Sigma)$ are normal operators. Since $q_1\circ \pi$ and $q_2\circ \pi$ are inner toral, we can prove the following (similar to the proof of Lemma \ref{8.1}). 
		\begin{enumerate}
			\item  $q_1\circ \pi(U_1, U_2)^*=U_1^{*j}U_2^{*k}q_1\circ\pi(U_1, U_2)$
			\item $q_2\circ \pi(U_1, U_2)^*=U_1^{*n}U_2^{*m}q_2\circ\pi(U_1, U_2)$, 
		\end{enumerate}
		where $U_1,U_2$ are commuting unitaries on $\mathcal K$ such that $\pi(U_1,U_2)=(T,U)$ and $j,k,n,m \geq 0$. Consequently, $q_1(\Sigma)^*=U_1^{*j}U_2^{*k}q_1(\Sigma)$ and  $q_2(\Sigma)^*=U_1^{*n}U_2^{*m}q_2(\Sigma)$. This shows that for any $x \in \mathcal{L}'$, we have $ U_1^{*j}U_2^{*k}q_1(\Sigma)x=q_1(\Sigma)^*x=0$.
		Hence, $q_1(\Sigma)|_{\mathcal{L}'}=0$. The space $\mathcal{K}_1=\mathcal{L}_1 \oplus \mathcal{L}'$ is a closed joint reducing subspace and $\Sigma|_{\mathcal{K}_1}$ is a $q_1$-$\Gamma$-unitary. The desired conclusion follows.
	\end{proof}
	
		The following generalized version of our previous result follows from mathematical induction and Proposition \ref{8.2}. 
	
	\begin{thm}
		Let $\Sigma=(T,U)$ be a $\Gamma$-unitary acting on a Hilbert space $\mathcal{K}$ annihilated by a distinguished polynomial $q$ and let $q_1, \dots, q_N$ be the distinct irreducible factors of $q$. Then there is an orthogonal decomposition of $\mathcal{K}$ into closed joint reducing subspaces $\mathcal{K}_1, \dotsc, \mathcal{K}_N$ such that each $\Sigma|_{\mathcal{K}_j}$ is a $\Gamma$-unitary annihilated by $q_j$.
			\end{thm}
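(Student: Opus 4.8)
The plan is to prove the statement by induction on the number $N$ of distinct irreducible factors of the distinguished polynomial $q$. The base case $N = 1$ is trivial: the whole space $\mathcal{K}$ together with the identity decomposition works, since $\Sigma$ itself is a $\Gamma$-unitary annihilated by $q = q_1$. For the inductive step, suppose the result holds for every $\Gamma$-unitary annihilated by a distinguished polynomial having fewer than $N$ distinct irreducible factors, and let $q = q_1 \cdots q_N$ with the $q_j$ distinct and irreducible.

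First I would group the factors: write $q = q_1 \cdot r$, where $r = q_2 \cdots q_N$. A point to check here is that $r$ is again a distinguished polynomial, i.e.\ $Z(r) \subseteq \G \cup b\Gamma \cup \pi(\mathbb{E}^2)$; this is immediate since $Z(r) \subseteq Z(q)$. Moreover, since $q_1$ and $r$ are coprime (the $q_j$ being distinct irreducibles), they are distinct factors of $q$ in the sense required by Proposition~\ref{8.2}. Applying Proposition~\ref{8.2} to $\Sigma = (T, U)$ with the factorization $q = q_1 r$, I obtain closed joint reducing subspaces $\mathcal{K}_1'$ and $\mathcal{K}_{\mathrm{rest}}$ with $\mathcal{K} = \mathcal{K}_1' \oplus \mathcal{K}_{\mathrm{rest}}$, such that $\Sigma|_{\mathcal{K}_1'}$ is a $\Gamma$-unitary annihilated by $q_1$ and $\Sigma|_{\mathcal{K}_{\mathrm{rest}}}$ is a $\Gamma$-unitary annihilated by $r$.

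Now the restriction $\Sigma|_{\mathcal{K}_{\mathrm{rest}}}$ is a $\Gamma$-unitary annihilated by the distinguished polynomial $r = q_2 \cdots q_N$, which has $N-1$ distinct irreducible factors. By the inductive hypothesis, $\mathcal{K}_{\mathrm{rest}}$ decomposes as an orthogonal direct sum $\mathcal{K}_2 \oplus \cdots \oplus \mathcal{K}_N$ of closed subspaces, each reducing for $T|_{\mathcal{K}_{\mathrm{rest}}}$ and $U|_{\mathcal{K}_{\mathrm{rest}}}$, with $\Sigma|_{\mathcal{K}_j}$ a $\Gamma$-unitary annihilated by $q_j$ for $2 \le j \le N$. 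Since each $\mathcal{K}_j$ reduces $T|_{\mathcal{K}_{\mathrm{rest}}}$ and $U|_{\mathcal{K}_{\mathrm{rest}}}$ and $\mathcal{K}_{\mathrm{rest}}$ itself reduces $T$ and $U$, each $\mathcal{K}_j$ is a closed joint reducing subspace of the original space $\mathcal{K}$. Setting $\mathcal{K}_1 = \mathcal{K}_1'$, we obtain $\mathcal{K} = \mathcal{K}_1 \oplus \mathcal{K}_2 \oplus \cdots \oplus \mathcal{K}_N$ with each $\Sigma|_{\mathcal{K}_j}$ a $\Gamma$-unitary annihilated by $q_j$, completing the induction.

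The only delicate point — and it is more bookkeeping than obstacle — is making sure that the reducing-subspace property is genuinely transitive along the induction, so that the pieces produced at the second stage are reducing for the ambient operators $T, U$ and not merely for their restrictions; this is immediate from the definition of a reducing subspace. There is no analytic difficulty here: all the substance is carried by Proposition~\ref{8.2} (and ultimately by Lemma~\ref{8.1}, the orthogonality of ranges of coprime inner-toral polynomials evaluated at a commuting pair of unitaries). Hence the proof is a clean induction, and I would present it in two or three sentences once Proposition~\ref{8.2} is in hand.
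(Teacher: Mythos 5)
Your proposal is correct and is exactly the argument the paper intends: the paper states only that the theorem ``follows from mathematical induction and Proposition \ref{8.2},'' and your induction on $N$ (splitting $q = q_1 r$ with $r = q_2\cdots q_N$, applying Proposition \ref{8.2}, and invoking the inductive hypothesis on the restriction to $\mathcal{K}_{\mathrm{rest}}$) is precisely that induction spelled out, with the right small checks that $r$ remains distinguished and that reducing subspaces pass to the ambient space.
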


Having established the above decomposition theorem for distinguished $\Gamma$-unitaries, we next obtain an analogous decomposition for distinguished pure $\Gamma$-isometries, which is the main result of this section. Recall from Theorem \ref{thm:modelpure} that every pure $\Gamma$-isometry $(T, V)$ is unitarily equivalent to the restriction of the $\Gamma$-unitary $(M_\phi, M_z)$ on $L^2(\mathcal{D}_{V^*})$ to the joint invariant subspace $H^2(\mathcal{D}_{V^*})$. 

	\begin{thm}\label{8.4}		
		Let $\Sigma=(T, V)$ be a pure $\Gamma$-isometry on $\mathcal{H}$. Let $q$ be a distinguished polynomial that annihilates $(T, V)$ and let $q_1, \dots, q_N$ be the distinct irreducible factors of $q$. Then there exist $(N+1)$ closed orthogonal disjoint subspaces $\mathcal{H}_1, \dotsc, \mathcal{H}_N$ of $\mathcal{H}$ that are invariant under both $T$ and $V$ such that $\mathcal{H}=\mathcal{H}_1 \oplus \dotsc \oplus \mathcal{H}_N \oplus \mathcal{H}'$ and each $\displaystyle \Sigma|_{\mathcal{H}_j}$ is a pure $\Gamma$-isometry annihilated by $q_j$, where $\HS'=\HS \ominus (\HS_1 \oplus \dotsc \oplus \HS_N)$. Moreover, each $\mathcal{H}_j=\overline{r_j(\Sigma)\mathcal{H}}$ for the polynomial $r_j=q \slash q_j$.  	
	\end{thm}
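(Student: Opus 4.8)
The plan is to realize the pure $\Gamma$-isometry $\Sigma=(T,V)$ inside a $\Gamma$-unitary, apply the decomposition theorem for $\Gamma$-unitaries just proved, and then pull the pieces back to $\mathcal{H}$ via compression. By Theorem \ref{thm:modelpure}, $\Sigma$ is unitarily equivalent to the restriction of the $\Gamma$-unitary $\mathcal{U}=(M_\phi, M_z)$ on $L^2(\mathcal{D}_{V^*})$ to the joint invariant subspace $H^2(\mathcal{D}_{V^*})$; so without loss of generality I take $\mathcal{H}=H^2(\mathcal{D}_{V^*})\subseteq \mathcal{K}=L^2(\mathcal{D}_{V^*})$ and $\Sigma=\mathcal{U}|_{\mathcal{H}}$. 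Since $q$ annihilates $\Sigma$ and $\mathcal{H}$ is cyclic for $\mathcal{U}$ in the sense that $\mathcal{K}=\overline{\mathrm{span}}\{U_1^{-k}U_2^{-l}h: k,l\ge 0,\ h\in\mathcal{H}\}$ where $\pi(U_1,U_2)=\mathcal{U}$ (indeed $M_z$ restricted to $H^2$ has $L^2$ as its minimal unitary extension), a standard argument shows that $q$ annihilates $\mathcal{U}$ as well: $q(\mathcal{U})$ commutes with $U_1,U_2$ and kills the cyclic subspace $\mathcal{H}$, hence is zero.

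Now apply the $\Gamma$-unitary decomposition theorem: write $q=q_1\cdots q_N$ into distinct irreducible factors and obtain $\mathcal{K}=\mathcal{K}_1\oplus\cdots\oplus\mathcal{K}_N$ into closed joint reducing subspaces with $\mathcal{U}|_{\mathcal{K}_j}$ annihilated by $q_j$. The natural candidates on the $\mathcal{H}$ side are $\mathcal{H}_j := \overline{r_j(\Sigma)\mathcal{H}}$ with $r_j = q/q_j$, since one expects $\overline{r_j(\mathcal{U})\mathcal{K}} = \mathcal{K}_j$ from the construction in Proposition \ref{8.2}. The key point is that $r_j(\Sigma)\mathcal{H} = r_j(\mathcal{U})\mathcal{H} \subseteq r_j(\mathcal{U})\mathcal{K} \subseteq \mathcal{K}_j$ (since $q_j r_j = q$ annihilates $\mathcal{U}$, the range of $r_j(\mathcal{U})$ lies in $\ker q_j(\mathcal{U})$, which is exactly $\mathcal{K}_j$ up to checking the residual subspace as in Proposition \ref{8.2}). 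Thus $\mathcal{H}_j\subseteq \mathcal{K}_j$, which immediately gives mutual orthogonality of $\mathcal{H}_1,\dots,\mathcal{H}_N$ from the orthogonality of the $\mathcal{K}_j$, together with the orthogonality statement $q_i(\mathcal{U})\mathcal{K}\perp q_j(\mathcal{U})\mathcal{K}$ of Lemma \ref{8.1}. Each $\mathcal{H}_j$ is invariant under $T$ and $V$ because $r_j(\Sigma)$ commutes with $T$ and $V$ and $\mathcal{H}$ is $T,V$-invariant; it need not reduce. Since $\mathcal{H}_j\subseteq\mathcal{K}_j$ and $q_j$ annihilates $\mathcal{U}|_{\mathcal{K}_j}$, it annihilates $\Sigma|_{\mathcal{H}_j}=\mathcal{U}|_{\mathcal{H}_j}$. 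That $\Sigma|_{\mathcal{H}_j}$ is again a \emph{pure} $\Gamma$-isometry follows because a joint invariant subspace of a pure $\Gamma$-isometry is a pure $\Gamma$-isometry: $V|_{\mathcal{H}_j}$ is the restriction of the pure contraction $V$ to an invariant subspace, hence pure, and $(\,\Sigma|_{\mathcal{H}_j}\,)$ is a $\Gamma$-isometry by the dilation definition. Finally set $\mathcal{H}' := \mathcal{H}\ominus(\mathcal{H}_1\oplus\cdots\oplus\mathcal{H}_N)$, so $\mathcal{H}=\mathcal{H}_1\oplus\cdots\oplus\mathcal{H}_N\oplus\mathcal{H}'$ by construction.

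The main obstacle I anticipate is the orthogonality and disjointness bookkeeping: establishing that the spaces $\overline{r_j(\Sigma)\mathcal{H}}$ really are pairwise orthogonal. The cleanest route is the inclusion $\mathcal{H}_j\subseteq\mathcal{K}_j$ combined with $\mathcal{K}_i\perp\mathcal{K}_j$; but to make $\mathcal{H}_j\subseteq\mathcal{K}_j$ airtight I must run through the same residual-subspace analysis as in the proof of Proposition \ref{8.2}, using that $q_i\circ\pi$ is inner toral so that $q_i(\mathcal{U})^* = U_1^{*a}U_2^{*b}q_i(\mathcal{U})$ for suitable exponents, which forces $\ker q_i(\mathcal{U})^* = \ker q_i(\mathcal{U})$ and lets one identify $\mathcal{K}_i = \overline{r_i(\mathcal{U})\mathcal{K}}$. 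One should also note $r_i - \lambda r_j$ type relations are not needed; the factorization $q = q_i r_i$ with $q$ annihilating $\mathcal{U}$ does all the work, since $\mathrm{Ran}\, r_i(\mathcal{U}) \subseteq \ker q_i(\mathcal{U})$ and the latter is $\mathcal{K}_i$. A secondary subtlety is that the $\mathcal{H}_j$ are only invariant, not reducing, under $\Sigma$ — so one cannot hope for $\mathcal{H}'=0$ in general, and the statement correctly allows the leftover summand $\mathcal{H}'$.
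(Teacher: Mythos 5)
Your proposal is correct, and it reaches the same subspaces $\mathcal{H}_j=\overline{r_j(\Sigma)\mathcal{H}}$ as the paper, but by a different route. The paper peels off one irreducible factor at a time: it applies Proposition \ref{8.2} (the two-factor case) to the minimal normal extension, splits off $\mathcal{H}_1=\overline{p(\Sigma)\mathcal{H}}$ and $\mathcal{H}_1'=\overline{q_1(\Sigma)\mathcal{H}}$, then re-invokes the model theorem for the \emph{restricted} pure $\Gamma$-isometry $\Sigma|_{\mathcal{H}_1'}$ to get a fresh $L^2$-space, and iterates; the formula $\mathcal{H}_j=\overline{r_j(\Sigma)\mathcal{H}}$ emerges only at the end by unwinding the recursion. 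You instead apply the full $N$-fold $\Gamma$-unitary decomposition once to $\mathcal{U}=(M_\phi,M_z)$ and define all the $\mathcal{H}_j$ simultaneously, which avoids the repeated model-theorem invocations and makes the closed-form description of the $\mathcal{H}_j$ immediate. The one step you correctly flag as needing care is the containment $\overline{r_j(\mathcal{U})\mathcal{K}}\subseteq\mathcal{K}_j$: your identification $\mathcal{K}_i=\overline{r_i(\mathcal{U})\mathcal{K}}$ is slightly off, since the iterated construction appends the residual joint-kernel piece $\bigcap_i\ker q_i(\mathcal{U})$ to one of the summands; but only the inclusion is needed, and it follows because $q_i$ divides $r_j$ for $i\neq j$, so $\mathrm{Ran}\,r_j(\mathcal{U})\subseteq\mathrm{Ran}\,q_i(\mathcal{U})=\bigl(\ker q_i(\mathcal{U})^*\bigr)^\perp=\bigl(\ker q_i(\mathcal{U})\bigr)^\perp\perp\mathcal{K}_i$, using normality of $q_i(\mathcal{U})$. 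Alternatively, you could bypass the $\mathcal{K}_j$ entirely: orthogonality of $\mathcal{H}_i$ and $\mathcal{H}_j$ follows from Lemma \ref{8.1} applied to the distinguished polynomial $r_ir_j$ (annihilated by $\mathcal{U}$ since $q$ divides it), and $q_j(\Sigma)$ kills $\mathcal{H}_j$ simply because $q_jr_j=q$. Your remaining points --- that $q(\mathcal{U})=0$ by minimality of the normal extension, and that a restriction of a pure $\Gamma$-isometry to a joint invariant subspace is again a pure $\Gamma$-isometry --- match the paper and are sound.
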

	
	\begin{proof}
		For $q=q_1q_2\dotsc q_N$, let $p=q_2q_3\dots q_N$. We will show that $\mathcal{H}$ has a closed joint invariant subspace restricted to which $\Sigma$ can be written as the direct sum of a $q_1$-$\Gamma$-isometry and a $p$-$\Gamma$-isometry. The desired conclusion then follows from mathematical induction. By Theorem \ref{thm:modelpure}, $(T, V)$ is unitarily equivalent to the restriction of the $\Gamma$-unitary $\mathcal{U}=(M_\phi, M_z)$ acting on $\mathcal{K}=L^2(\mathcal{D}_{V^*})$ to the joint invariant subspace $H^2(\mathcal{D}_{V^*})$. Without loss of generality, we assume that $\HS=H^2(\mathcal{D}_{V^*})$ and $(T, V)=(M_\phi|_\HS, M_z|_\HS)$. Since 
		$(M_\phi, M_z)$ is (up to unitary equivalence) the minimal normal extension of $(T, V)$, the proof of Proposition 7.1 in \cite{Pal_Tomar} guarantees that $q(\mathcal{U})=0$. By Proposition \ref{8.2}, there is an orthogonal decomposition of $\mathcal{K}$ into disjoint closed subspaces $\mathcal{K}_1, \mathcal{K}_2$ reducing both $M_\phi, M_z$ such that $\mathcal{U}|_{\mathcal{K}_1}$ is a $q_1$-$\Gamma$-unitary and $\mathcal{U}|_{\mathcal{K}_2}$ is a $p$-$\Gamma$-unitary. The subspaces are given by 
		\[
		\mathcal{K}_1=\overline{p(\mathcal{U})\mathcal{K}} \oplus \bigg[\mbox{Ker} \; q_1(\mathcal{U})\cap  \mbox{Ker} \; p(\mathcal{U})\bigg] \quad \mbox{and} \quad \mathcal{K}_2=\overline{q_1(\mathcal{U})\mathcal{K}}.
		\]
		The subspaces of $\mathcal{H}$ defined by
		$			
		\mathcal{H}_1=\overline{p(\Sigma)\mathcal{H}}$ and $\mathcal{H}_1'=\overline{q_1(\Sigma)\mathcal{H}}
$ are invariant under $T$ and $V$. Since $p(\Sigma)\mathcal{H}=p(\mathcal{U})\mathcal{H}\subseteq \mathcal{K}_1$ and $q_1(\Sigma)\mathcal{H}=q_1(\mathcal{U})\mathcal{H}\subseteq \mathcal{K}_2$, we have that $\mathcal{H}_1$ and $\mathcal{H}_1'$ are subspaces of $\mathcal{K}_1$ and $\mathcal{K}_2$ respectively. Hence, $\mathcal{H}_1$ and $\mathcal{H}_1'$ are disjoint orthogonal spaces such that $\Sigma|_{\mathcal{H}_1}$ and $\Sigma|_{\mathcal{H}_1'}$ are pure $\Gamma$-isometries annihilated by $q_1$ and $p$ respectively. The pair $\Sigma_2=(T_2, V_2)=(T|_{\mathcal{H}_1'}, V|_{\mathcal{H}_1'})$ is a pure $\Gamma$-isometry and $p(T_2, V_2)=0$. Again, applying Theorem \ref{thm:modelpure}, the model theorem for a pure $\Gamma$-isometry, we have that $(T_2, V_2)$ is unitarily equivalent to $(T_{\phi_2}, T_z)$ on $H^2(\mathcal{D}_{V_2^*})$ and $p(T_{\phi_2}, T_z)=0$. This yields that the $\Gamma$-unitary $\mathcal{U}_2=(M_{\phi_2}, M_z)$ on $\mathcal{K}_2=L^2(\mathcal{D}_{V_2^*})$ is annihilated by $p=q_2\dotsc q_N$. Let $p_2=q_3\dotsc q_N$. Proposition \ref{8.2} shows that $\mathcal{K}_2$ can be written as an orthogonal decomposition of reducing subspaces $\mathcal{K}_{21}$ and $\mathcal{K}_{22}$ such that $\mathcal{U}_2|_{\mathcal{K}_{21}}$ is a $q_2$-$\Gamma$-unitary and $\mathcal{U}_2|_{\mathcal{K}_{22}}$ is a $p_2$-$\Gamma$-unitary. Moreover, the structures of these two subspaces are as follows.
		\[
		\mathcal{K}_{21}=\overline{p_2(\mathcal{U}_2)\mathcal{K}_2} \oplus \bigg[\mbox{Ker} \; q_2(\mathcal{U}_2)\cap  \mbox{Ker} \; p_2(\mathcal{U}_2)\bigg] \quad \mbox{and} \quad \mathcal{K}_{22}=\overline{q_2(\mathcal{U}_2)\mathcal{K}_2}.
		\]
		One can refer to the proof of Proposition \ref{8.2} to obtain the above two subspaces. Now, the spaces $	\mathcal{H}_2=\overline{p_2(\Sigma)\mathcal{H}_1'}$ and $\mathcal{H}_2'=\overline{q_2(\Sigma)\mathcal{H}_1'}$ are (unitarily equivalent to) closed joint subspaces of $\mathcal{K}_{21}$ and $\mathcal{K}_{22}$ respectively which are both invariant under $T$ and $V$. Thus, $\mathcal{H}_2$ and $\mathcal{H}_2'$ are orthogonal and it is not difficult to see that $\Sigma_2|_{\mathcal{H}_2}$ and $\Sigma_2|_{\mathcal{H}_2'}$ are pure $\Gamma$-isometries annihilated by $q_2$ and $p_2$ respectively. Needless to mention that $\mathcal{H}_2$ is the same as $\overline{q_1(\Sigma)p_2(\Sigma)\mathcal{H}}$. Thus, there exist closed disjoint orthogonal subspaces $\mathcal{H}_1$ and $\mathcal{H}_2$ that are invariant under $T$ and $V$. Also, $\Sigma|_{\mathcal{H}_j}$ is a pure $\Gamma$-isometry that $q_j$ annihilates for $j=1,2$. Continuing this process for finitely many steps, we have the desired conclusion.
	\end{proof}

\vspace{0.3cm}

\noindent \textbf{Funding.} The first named author is supported in part by Core Research Grant with Award No. CRG/2023/005223 from Anusandhan National Research Foundation (ANRF) of Govt. of India. The second named author is supported by the Prime Minister's Research Fellowship (PMRF) with PMRF Id No. 1300140 of Govt. of India. 

\medskip

\noindent \textbf{Declarations.} No data was analysed or used during the course of the paper. All authors of this
 paper contributed equally to article. Also, there is no competing interest.
 
 \bigskip

	\end{document}